\newcommand{\QQ}{{\mathbb{Q}}}
\newcommand{\QQbar}{{\overline{\mathbb{Q}}}}
\newcommand{\CC}{{\mathbb{C}}}
\newcommand{\HH}{{\mathbb{H}}}
\newcommand{\cusps}{{\mathcal{C_N}}}
\newcommand{\RR}{{\mathbb{R}}}
\newcommand{\ZZ}{{\mathbb{Z}}}
\newcommand{\FF}{{\mathbb{F}}}
\newcommand{\TT}{{\mathbb{T}}}
\newcommand{\mm}{{\mathbf{m}}}
\newcommand{\dual}{\vee}
\newcommand{\goto}{\mapsto}
\newcommand{\isom}{\simeq}
\newcommand{\ndiv}{\nmid}
\newcommand{\tors}{\text{tors}}
\DeclareMathOperator{\num}{Num}
\DeclareMathOperator{\jac}{Jac}
\DeclareMathOperator{\End}{End}
\let\hom=\relax
\DeclareMathOperator{\hom}{Hom}
\DeclareMathOperator{\frob}{Frob}
\DeclareMathOperator{\id}{Id}
\DeclareMathOperator{\ord}{ord}
\DeclareMathOperator{\Otimes}{\bigotimes}
\newtheorem{thm}{Theorem}[section]
\newtheorem{lemma}[thm]{Lemma}
\newtheorem{prop}[thm]{Proposition}
\newtheorem{cor}[thm]{Corollary}
\theoremstyle{definition} 
\theoremstyle{remark} \newtheorem{remark}[thm]{Remark}
\theoremstyle{remark} 
\theoremstyle{remark}
\begin{document}
    \title{Modular Abelian Varieties of Odd Modular Degrees}
    \author{S. Yazdani}
    \address{McMaster University}
    \email{syazdani@math.mcmaster.ca}

    \thanks{This research was partially supported by NSERC.}
    \begin{abstract}
        In this paper, we will study modular Abelian varieties with 
        odd congruence
        numbers by examining the cuspidal subgroup of $J_0(N)$. We will
        show that the conductor of such Abelian varieties must be of
        a special type. For example, if $N$ is the conductor of an
        absolutely simple modular Abelian variety with an odd
        congruence number, then $N$ has at most
        two prime divisors, and if $N$ is odd, 
        then $N=p^\alpha$ or
        $N=pq$ for some prime $p$ and $q$. In the second half of this
        paper, we will focus on modular elliptic curves with odd modular
        degree.
        Our results, combined with
        the work of Agashe, Ribet, and Stein, finds necessary condition
        for elliptic curves to have odd modular degree. In the process
        we prove Watkins's conjecture for elliptic curves with odd
        modular degree and a nontrivial rational torsion point.
    \end{abstract}
    \maketitle

    Let $E/\QQ$ be an elliptic curve over the rational numbers. From the work of
Wiles, Taylor-Wiles, et al, we know that $E$ is modular (see \cite{BCDT}),
which implies that there is a surjective map $\pi:X_0(N) \rightarrow E$ 
defined over the rationals. As such, we have a new invariant attached to
the elliptic curve, namely the minimal degree of $\pi$, which we call the
{\em modular degree} of $E$. This invariant is related to many other invariants of an
the elliptic curve. For instance, this number is closely related to the congruences
between $E$ and other modular forms (see \ref{ssec:congnumber} and \cite{ARS}).
Also, we know that finding a good bound
on the degree of $\pi$ in terms of $N$ is equivalent to the $ABC$ conjecture
(see \cite{MM}, \cite{Frey}).

After calculating the modular degree of various elliptic curves, 
Watkins conjectured that
$2^r$ divides the modular degree of the elliptic curve $E$, 
where $r$ is the rank of $E(\QQ)$ 
(see \cite{Wat2000}).
In the particular case when the modular degree of $E$ is odd, Watkins's conjecture
implies $E(\QQ)$ is finite.
Searching through Cremona, Stein, and Watkins's database 
(\cite{SW} and \cite{CW}) for
elliptic curves of odd modular degree, Calegari and Emerton observed 
that all such elliptic curves have bad reduction at no more than two
primes. 
By studying the Atkin-Lehner involution on 
elliptic curves $E$ having odd modular degree,
they demonstrated that such curves have an even analytic rank and that
there are at most two odd primes dividing
their conductor (see \ref{sec22} and \cite{CE}). 
Dummigan has recently provided a heuristic explanation for
Watkins's conjecture.
His method uses the Selmer group of the symmetric
square of $E$ and its relationship to congruences between 
modular forms (see \cite{Dum}).

The goal of this paper is to generalize the results of Calegari and Emerton
to modular Abelian varieties having odd modular exponents and odd congruence
number (see \ref{ssec:congnumber} for definition).
We find necessary conditions for a modular Abelian variety 
to have an odd congruence number. Specifically in 
theorem \ref{modabclassification}
we show that if a modular Abelian variety with conductor $N$ has an odd
congruence number, then $N=2p, 4p^a, 8p^a,pq$ where $p$ and $q$
are odd primes, or $N$ is a power of a prime.
In section \ref{EllipticCurves} we study elliptic curves having odd
congruence numbers. 
Recall that the result of Agashe, Ribet, and
Stein, states that elliptic curves with semistable
reduction at $2$ have odd congruence number if and only if they have
odd modular degree (see theorem \ref{moddegcong}).
\footnote{In fact, by searching through Cremona table of elliptic curve, 
it seems that
an elliptic curve has an odd congruence number if and only if it has an
odd modular degree.}
We find more stringent conditions that elliptic
curves with an odd congruence number need to satisfy. Specifically if an 
elliptic curve $E$ with conductor $N$ has an odd congruence number, then 
if it has a 
trivial torsion structure then $N$ is prime and $E$ has an even analytic rank, 
otherwise $N$ has at most two prime divisors and has rank $0$.
Furthermore, we find families of elliptic curves that any elliptic curve
with odd congruence number and a non-trivial torsion point must belong to
one of these families (see theorem \ref{oddellipticcurves}). 
We expect that the
elliptic curves in these families have odd modular degrees, although
to prove this we need a better understanding of the rational torsion points of
$J_0(N)$.

We now give a quick overview of this article.
In section \ref{Prelims}, we review some of the definitions used in this paper,
along with some results that come in handy in the rest of the paper.
Specifically, in section \ref{Cuspidal} we recall how to calculate the
rational cuspidal subgroup of $J_0(N)$, and in section \ref{sec:HeckeAction}
we study the action of the Hecke algebra and Atkin-Lehner involutions
on this subgroup.
In section \ref{ModularAV}, we study modular
Abelian varieties with odd congruence numbers, and show that all such
Abelian varieties have at most two primes of bad reduction. A key component
of this argument is that if $A$ is a modular Abelian variety having 
non prime-power conductor 
and if $A$ has an odd congruence number, then it must have a 
rational $2$-torsion point (theorem \ref{thmCE1}). 
We also show that if $A$ has an odd congruence number and a rational 
$2$-torsion
point, then all of the new rational $2$-torsion points of $J_0(N)$
map injectively to $A$ (see section \ref{sec23}). We use this fact and our 
analysis of cuspidal subgroup to show that if $A$ has an odd congruence number
and is semistable away from $2$, then it has at most two primes of bad reduction
(theorem \ref{CongNumberPQ}) and the primes dividing the conductor must
satisfy certain congruences (theorem \ref{modabclassification}).
The other useful result is that if $p^2|N$
for some odd prime $N$, then $A$ must have a complex multiplication or an
inner twist (section \ref{CMCase}).
In section
\ref{EllipticCurves} we apply our results 
to elliptic curves. Theorem \ref{modabclassification} gives us different
type of conductors that elliptic curves with odd congruence number must 
satisfy. In each subsection of section \ref{EllipticCurves} we study
one of these cases, and get more stringent conditions on the conductor,
and show that in almost all cases the rank of such elliptic curves is zero
(theorem \ref{oddellipticcurves}).

{\bf Acknowledgements:}
This paper would not have been possible without the help of my advisor,
Ken Ribet. Specifically, many of the results in section 2.4 were suggested
to me by him. I would also like to thank Frank Calegari, Matt Emerton, William
Stein, and Jared Weienstein, with whom I have had many discussions. 
Manfred Kolster and Romyar Sharifi gave me very useful feedback 
on the first draft of this article. Finally, I would like to thank Jovanca
Buac for her careful reading of this paper and all of her suggestions.

    % The chapter on Cuspidal subgroup. The one in the thesis doesn't contain
% the Galois action, which is necessary for the case $4N$ and $8N$.
\section{Preliminaries}
\label{Prelims}
Let $N$ be a positive integer and $X_0(N)$ be the moduli space of
elliptic curves with a cyclic subgroup of order $N$.
Let $\cusps \subset X_0(N)$ be the
set of cusps of $X_0(N)$, that is $\cusps= \pi^{-1}(\infty)$,
where $\pi:X_0(N) \rightarrow X_0(1)$ is the natural degeneracy map,
and $\infty$ is the unique cusp on $X_0(1)$. 
All such cusps can be represented as rational numbers
${a \over b} \in \HH$,
with $a$ and $b$ positive coprime integers and $b | N$. 
Furthermore, there is a unique representative for any cusp with
$a \leq (b,N/b)$.
Under this representation, $\infty = {1\over N}$.
For any integer $r | N$ such that $\gcd(r,N/r)=1$,
we can define the Atkin-Lehner involution $w_r:X_0(N) \rightarrow X_0(N)$,
by sending $(E,D) \in X_0(N)$ to $(E/D[r],(E[r]+D)/D[r])$.\footnote{As usual, $G[r]$ is the set of $r$-torsion points of the group $G$.}
We usually abuse notation by letting $w_{\overline{r}}=w_r$ whenever
$\overline{r}=\prod_{l | r} l$ (for example $w_4=w_2$ on $X_0(4N)$).

Let $S(N)$ be the space of weight two cuspforms on
$\Gamma_0(N)$. Let $\TT$ denote the $\ZZ$-algebra of the Hecke operators
acting on $S(N)$.
As usual, we denote $J_0(N)=\jac(X_0(N))$. Then, $\TT$ acts faithfully on 
$J_0(N)$ by Picard functoriality.
We also have the standard
Albanese embedding $i:X_0(N) \rightarrow J_0(N)$ via $i(z)=(z)-(\infty)$.
Note that for any map $w:X_0(N) \rightarrow X_0(N)$ we have
the induced map
\begin{eqnarray*}
    w_* : & J_0(N)  \rightarrow & J_0(N) \\
    & \sum (z)  \goto & \sum( w(z)).
\end{eqnarray*}

\subsection{Congruence Numbers}
    \label{ssec:congnumber}
    Recall that attached to any newform $f \in S(N)$ we have a
    modular Abelian variety $A_f$. Specifically, let $I_f$ be the
    kernel of $\TT \rightarrow \CC$ induced by $f$. Then we have
    $A_f=J_0(N)/I_f$, which we refer to as the {\em optimal quotient}
    attached to $f$. Conversely, if $A$ is a simple
    quotient of $J_0(N)$ that is stable under the action of $\TT$
    and the Atkin-Lehner involutions, then we can find a modular eigenform
    $f \in S(N)$ such that $A$ is isogenous to $A_f$. In this case,
    we say that $f$ is attached to $A$. Furthermore
    all modular forms attached to $A$ are Galois conjugate to $f$.
    Let $\phi:J_0(N) \rightarrow A$ be a surjective morphism. 
    Then the dual morphism is
    $\phi^\dual:A^\dual \rightarrow J_0(N)^\dual.$
    Since $J_0(N)$ is self dual, we can compose these two morphisms to get
    \[ \psi : A^\dual \rightarrow A.\]
    Following \cite{ARS}, define {\em modular number} to be the order
    of $\ker(\psi)$, and {\em modular exponent} to be its exponent, denoted
    by $\widetilde{n_A}$.
    If $A$ is an elliptic curve, then $\widetilde{n_A}$ equals to
    the modular degree of $A$. In fact, in the case of elliptic curves 
    we get that $\ker(\psi)=A[\deg(\pi)]$ where $\pi:X_0(N) \rightarrow A$
    (see lemma \ref{mult}).

    Now let $\phi:J_0(N)\rightarrow A$ be any optimal modular Abelian quotient.
    Let $B=\ker(\phi)$, which is an Abelian variety since $A$ is 
    an optimal quotient. Let $\TT_A$ be the $\ZZ$-algebra of the Hecke operators
    acting on $A$. Similarly, let $\TT_B$ be the $\ZZ$-algebra of the Hecke 
    operators acting on $B$. 
    There is an injective map $\TT \rightarrow \TT_A \oplus \TT_B$ with a
    finite index, given by the restriction map.
    The order of the cokernel of $\TT \rightarrow \TT_A \oplus \TT_B$ is 
    the {\em congruence number} of $A$. The exponent of this cokernel
    is the {\em congruence exponent} of $A$, which is 
    denoted by $\widetilde{r_A}$ (see lemma 4.3 of \cite{ARS}).
    Let $\mm \subset \TT$ be a maximal ideal of $\TT$. Then $A[\mm] \neq 0$
    (resp. $B[\mm] \neq 0$)
    if and only if image of $\mm$ in $\TT_A$ (resp. $\TT_B$) is a proper
    maximal ideal. If $A[\mm]$ and $B[\mm]$ are both nontrivial, then
    by tensoring $\TT \rightarrow \TT_A \oplus \TT_B$ by $\TT/\mm$, we 
    see that the cokernel
    is a nontrivial vector space over $\TT/\mm$, which means that
    the characteristic of $\TT/\mm$ divides the congruence exponent of $A$.
    On the other hand, if $A[\mm] \neq 0$, then $A^\vee[\mm] \neq 0$,
    and if $A[\mm] \cap B[\mm] \neq 0$, then the characteristic of 
    $\TT/\mm$ divides the modular exponent.
    
%    Note that if $l | \widetilde{r_a}$, then by applying the 
%    $\hom(-,{\overline \FF_l})$ functor to $\TT \rightarrow \TT_A \oplus \TT_B$,
%    we get that 
%    \[ \hom(\TT_A \oplus \TT_B, {\overline \FF_l}) \rightarrow \hom(\TT,{\overline \FF_l}) \]
%    is not injective. However, right-hand side
%    is just the space of the modular forms of 
%    weight $2$, level $N$ over $\overline \FF_l$, while the left-hand side is
%    the direct sum of the modular forms attached to $A$ and $B$ over 
%    ${\overline \FF_l}$.
%    Therefore, the above map is not injective if and only if there are 
%    modular
%    forms attached to $A$ and $B$ that are congruent in characteristic $l$. 
%    As such, to show that $A_f$
%    has even congruence number, it suffices to find a modular form 
%    $g \in S(N)$ that is not Galois conjugate to $f$, but is congruent
%    to $f$ modulo $\lambda$ for some prime above $2$.

    In \cite{ARS}, the relationship between the modular exponent
    and the congruence exponent was studied, and the following was proved.
    \begin{thm}
        \label{moddegcong}
        If $f \in S(N)$ is a newform, then
        \begin{enumerate}
            \item $\widetilde{n_{A_f}} | \widetilde{r_{A_f}}$, and
            \item if $p^2 \ndiv N$, then $\ord_p(\widetilde{n_{A_f}})=\ord_p(\widetilde{r_{A_f}}).$
        \end{enumerate}
    \end{thm}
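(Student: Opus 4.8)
\emph{Remark on strategy.} This is the theorem of Agashe--Ribet--Stein, so the plan follows \cite{ARS}; the idea is to translate both $\widetilde{n_{A_f}}$ and $\widetilde{r_{A_f}}$ into statements about the Hecke module $\TT$ and the $p$-divisible group of $J_0(N)$, and then compare.

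First I would unwind the two quantities. Write $\mathcal{A}=\phi^\dual(A_f^\dual)\subseteq J_0(N)$ for the image of the dual map and $B=\ker\phi$; since $\phi$ is an optimal quotient, $\phi^\dual$ is a closed immersion, so $\psi$ factors as $A_f^\dual\xrightarrow{\sim}\mathcal{A}\hookrightarrow J_0(N)\xrightarrow{\phi}A_f$, and hence $\ker\psi\isom\mathcal{A}\cap B$. Thus the modular number is $\#(\mathcal{A}\cap B)$ and $\widetilde{n_{A_f}}$ is the exponent of this finite group. On the Hecke side, put $I_f=\ann_\TT(A_f)$ (so $\TT_{A_f}=\TT/I_f$, because $A_f$ is of $\GL_2$-type) and $I'=\ann_\TT(B)$ (so $\TT_B=\TT/I'$). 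Since $\TT$ acts faithfully on $J_0(N)$ and $\mathcal{A}\times B\to J_0(N)$ is an isogeny, $I_f\cap I'=0$; factoring $\TT\to\TT/I_f\oplus\TT/I'$ through $\TT/(I_f\cap I')$ and applying the Chinese Remainder Theorem identifies the cokernel of $\TT\to\TT_{A_f}\oplus\TT_B$ with $\TT/(I_f+I')$. Hence the congruence number is $\#(\TT/(I_f+I'))$ and $\widetilde{r_{A_f}}$ is the additive exponent of the finite ring $\TT/(I_f+I')$.

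For part (1): $\mathcal{A}$ is killed by $I_f$ and $B$ is killed by $I'$, so $\mathcal{A}\cap B$ is a module over $\TT/(I_f+I')$; any module over a ring is annihilated by that ring's additive exponent, which here is $\widetilde{r_{A_f}}$. Therefore $\widetilde{r_{A_f}}\cdot(\mathcal{A}\cap B)=0$, i.e.\ $\widetilde{n_{A_f}}\mid\widetilde{r_{A_f}}$.

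For part (2): by (1) it suffices to prove $\ord_p(\widetilde{r_{A_f}})\le\ord_p(\widetilde{n_{A_f}})$, and both sides may be computed one maximal ideal $\mm\mid p$ at a time; only the $\mm$ with $\mm\supseteq I_f+I'$ (equivalently $A_f[\mm]\neq0$ and $B[\mm]\neq0$) contribute, since for the others $(\TT/(I_f+I'))_\mm=0$ and $(\mathcal{A}\cap B)_\mm=0$. Fix such an $\mm$ and set $R=\TT_\mm$. Multiplicity one at $\mm$ --- equivalently, the Gorenstein property of $R$, which is exactly where the hypothesis $p^2\ndiv N$ is used --- implies $T_p(J_0(N))_\mm$ is free of rank two over $R$. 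One checks unconditionally that $\mathcal{A}=(J_0(N)[I_f])^0$ and $B=(J_0(N)[I'])^0$ (both equalities by a dimension count), hence $T_p(\mathcal{A})_\mm=T_p(J_0(N))_\mm[I_f]$ and $T_p(B)_\mm=T_p(J_0(N))_\mm[I']$; and since $\TT$ is reduced with $\TT/I_f$ and $\TT/I'$ torsion-free, the annihilators satisfy $\ann_R(I_f)=(I')_\mm$ and $\ann_R(I')=(I_f)_\mm$. Feeding this into the exact sequence
\[
0\longrightarrow T_p(\mathcal{A})\oplus T_p(B)\longrightarrow T_p(J_0(N))\longrightarrow (\mathcal{A}\cap B)[p^\infty]\longrightarrow 0
\]
coming from the isogeny $\mathcal{A}\times B\to J_0(N)$ and localizing at $\mm$ gives $(\mathcal{A}\cap B)[p^\infty]_\mm\isom\bigl(R/(I_f+I')_\mm\bigr)^{\oplus 2}$, whose exponent equals the additive exponent of $R/(I_f+I')_\mm$. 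Taking the maximum over $\mm\mid p$ yields $\ord_p(\widetilde{n_{A_f}})=\ord_p(\widetilde{r_{A_f}})$.

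I expect the one genuinely nontrivial ingredient to be the multiplicity one / Gorenstein input in part (2): that $T_p(J_0(N))_\mm$ is free over $\TT_\mm$, which can fail when $p^2\mid N$ (and is the reason the hypothesis is imposed). Everything else is formal module theory, and indeed part (1) requires no hypothesis on $N$ at all.
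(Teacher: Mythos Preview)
The paper does not give a proof of this theorem: it is quoted verbatim from \cite{ARS} as an input, with no argument supplied. So there is nothing in the paper to compare your sketch against.

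That said, your outline is a faithful reconstruction of the Agashe--Ribet--Stein argument and is essentially correct. The identification $\ker\psi\cong\mathcal{A}\cap B$, the computation of the congruence cokernel as $\TT/(I_f+I')$, and the observation that $\mathcal{A}\cap B$ is a $\TT/(I_f+I')$-module give part~(1) with no hypothesis on $N$, exactly as you say. For part~(2), the exact sequence on Tate modules and the identification $T_p(\mathcal{A})_\mm=T_p(J_0(N))_\mm[I_f]$ are correct, and your annihilator computation $\ann_\TT(I_f)=I'$ is valid because $\TT/I'$ is torsion-free. The one place to be a bit more careful is the phrase ``multiplicity one at $\mm$ --- equivalently, the Gorenstein property of $R$'': these notions (multiplicity one for $J_0(N)[\mm]$, Gorensteinness of $\TT_\mm$, and freeness of $T_p(J_0(N))_\mm$ over $\TT_\mm$) are closely related but not literally equivalent in every regime, and what is actually used is the freeness statement, which \cite{ARS} assembles from results of Mazur, Ribet, Wiles, and others under the hypothesis $p^2\nmid N$. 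With that proviso, your sketch is sound.
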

    In particular, if $f$ is a newform of level $N$ and $4 \ndiv N$, then
    the modular exponent of $A_f$
    is odd if and only if its congruence exponent is odd.

\subsection{Cuspidal Subgroup}
\label{Cuspidal}
The cuspidal subgroup of $J_0(N)$ is the subgroup generated by the 
cusps of $X_0(N)$. 
The goal of this section is to understand the rational points of
the cuspidal subgroup of $J_0(N)$, denoted by $C_N$.
This problem is studied for
$N$ a power of a prime by San Ling \cite{SanLing1} 
and for $N$ the product of the two primes by
Seng-Kiat Chua and San Ling \cite{SanLing2}.
Following \cite{SanLing1}, let 
\[P_d={1\over \gcd(d,N/d)}\sum_{i=1}^{\gcd(d,N/d)} (id/N). 
\footnote{Our notation is slightly different from San Ling's papers. 
Specifically San Ling's $P_d$ is $\gcd(d,N/d)$ times our $P_d$.}
\]
With this notation we get
\begin{prop}
    The rational {\em cuspidal subgroup} $C_N \subset J_0(N)$ is 
    generated by the elements $\phi(\gcd(d,N/d))(P_d-P_1)$,
    where $\phi(k)$ is the Euler $\phi$-function.\footnote{This is the
    only section were $\phi$ is the Euler function. For the rest of the
    paper, whenever results of this section are used, $\phi(\gcd(d,N/d))=1$.
    Also outside of this section, $\phi$ is reserved for the map 
    $\phi:J_0(N) \rightarrow A$.}
    \label{CuspidalDefn}
\end{prop}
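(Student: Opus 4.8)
The plan is to exhibit an explicit finite set of divisors $P_d$ of $J_0(N)$ that are known to be torsion (hence lie in $C_N$), check that the prescribed multiples $\phi(\gcd(d,N/d))(P_d-P_1)$ are actually rational over $\QQ$, and then show that every rational cuspidal divisor class is a $\ZZ$-linear combination of these. First I would recall that a degree-zero divisor supported on the cusps of $X_0(N)$ defines a torsion point of $J_0(N)$: this is the theorem of Manin and Drinfeld. Consequently $C_N$, the rational part of the cuspidal subgroup, is a finite group, and to generate it we only need to understand which cuspidal divisor classes are $\Gal(\QQbar/\QQ)$-invariant. The cusps $a/b$ with $b\mid N$ and $a$ running over residues coprime to $\gcd(b,N/b)$ are permuted by $\Gal(\QQbar/\QQ)$ via its action on the relevant roots of unity; the orbits are indexed by the divisors $d=b$ of $N$, and the sum over an orbit is exactly $\gcd(d,N/d)\cdot P_d$ in San Ling's normalization, i.e. our $\sum_{i=1}^{\gcd(d,N/d)}(id/N)$. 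Thus $\gcd(d,N/d)P_d$ is a rational divisor of degree $\gcd(d,N/d)$, and $\gcd(d,N/d)(P_d-P_1)$ is a rational divisor of degree zero, defining a point of $C_N$. (One must be slightly careful that $\phi(\gcd(d,N/d))$, not merely $\gcd(d,N/d)$, is the multiplier in the statement; I would reconcile this by noting that within a single Galois orbit the point $P_d$ itself may already be defined over a subfield, so the minimal rational multiple of $P_d-P_1$ needs only the factor $\phi(\gcd(d,N/d))$ coming from the degree of the cyclotomic field cut out by the orbit — this is precisely San Ling's computation and I would cite \cite{SanLing1} for the exact constant.)

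Next I would argue that these elements generate all of $C_N$. The group of degree-zero divisors supported on $\cusps$ is free of rank $|\cusps|-1$; the Galois-fixed subgroup of its image in $J_0(N)$ is $C_N$. Averaging any Galois-fixed cuspidal class over its Galois orbit representatives expresses it in terms of the orbit-sums $\gcd(d,N/d)P_d$, and subtracting a suitable multiple of $\gcd(d,N/d)P_1$ to correct the degree shows that $C_N$ is generated by the classes of $\gcd(d,N/d)(P_d-P_1)$ as $d$ ranges over divisors of $N$. Refining the multiplier from $\gcd(d,N/d)$ to $\phi(\gcd(d,N/d))$ uses that the remaining multiple $(\gcd(d,N/d)/\phi(\gcd(d,N/d)))$-torsion relations already hold among the $P_d$ over $\QQ$ — again this is the content of San Ling's structure theorem, which I would simply invoke.

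The main obstacle is the bookkeeping of the Galois action on the cusps and the determination of the exact integer by which one must multiply $P_d - P_1$ to land in the rational subgroup: the naive orbit-averaging gives the factor $\gcd(d,N/d)$, and improving it to $\phi(\gcd(d,N/d))$ requires knowing that $P_d$ is rational over the cyclotomic field $\QQ(\zeta_{\gcd(d,N/d)})$ and no smaller, together with the fact that its non-trivial Galois translates differ from $P_d$ by classes already accounted for. Since the paper's footnote explicitly signals the normalization discrepancy with San Ling and promises that $\phi(\gcd(d,N/d))=1$ in every later application, the cleanest route is to state the proposition as a direct consequence of the main theorems of \cite{SanLing1} and \cite{SanLing2}, transported through the normalization change, rather than to reprove the Galois descent from scratch. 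I therefore expect the ``proof'' in the paper to be a short citation-and-translation argument, and I would write it that way.
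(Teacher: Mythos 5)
Your proposal is essentially the paper's own proof: the paper disposes of Proposition \ref{CuspidalDefn} with a bare citation to \cite{SanLing1}, exactly the ``citation-and-translation'' route you predict, with the normalization discrepancy handled by the footnote rather than by argument. Your preliminary sketch of the Galois-orbit descent and the $\phi(\gcd(d,N/d))$ multiplier is just an expanded gloss on what San Ling proves, so nothing in your approach diverges from the paper.
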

\begin{proof}
    See \cite{SanLing1}.
\end{proof}
In this subsection, we calculate the order of certain elements in this
group.
Recall that Dedekind's eta function is defined as
$$\eta(\tau)=q^{1/24}\prod_{n=1}^\infty (1-q^n),$$ where $q=e^{2\pi i \tau}.$
Let $\eta(M\tau)=\eta_M(\tau)$.
We use $\eta_M$ to construct 
functions with divisors supported on the cusps. 
In particular, for $M|N$, $\eta_M$ has a zero of order 
\begin{equation}
    {1\over 24} {N d'^2 \over dt M},
    \label{CuspIsom1}
\end{equation}
at the cusp of $X_0(N)$ corresponding to $x/d \in \HH$,
where $d'=\gcd(d,M)$ and $t=\gcd(d,N/d)$ (see, for example, \cite{Ogg74}). 
The following result of Ligozat can be used to calculate the order
of specific cusps.
\begin{prop}
    Let ${\bf r}=(r_\delta)$ be a family of rational numbers $r_\delta \in \QQ$
    indexed by all of the positive divisors of $\delta | N$. Then the function
    $g_{\bf r}=\prod_{\delta | N} \eta_\delta^{r_\delta}$ is a modular function
    on $X_0(N)$ if and only if the following conditions are satisfied:
    \begin{enumerate}
        \item All of the rational numbers $r_\delta$, are rational integers;
        \item $\sum_{\delta | N} r_\delta \delta \equiv 0 \pmod {24}$;
        \item $\sum_{\delta | N} r_\delta{N \over \delta} \equiv 0 \pmod {24}$;
        \item $\sum_{\delta | N} r_\delta=0$; 
        \item $\prod_{\delta | N} \delta^{r_\delta}$ is a square of a rational number.
    \end{enumerate}
    \label{ligozatprop}
\end{prop}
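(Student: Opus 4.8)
The statement characterizes when an eta-quotient $g_{\bf r}=\prod_{\delta\mid N}\eta_\delta^{r_\delta}$ descends to a well-defined modular function on $X_0(N)$. The plan is to separate the claim into three independent requirements: (i) that $g_{\bf r}$ transforms with trivial automorphy factor under $\Gamma_0(N)$, (ii) that the product is single-valued (no branch issues from fractional exponents or from the implicit $24$th root in $\eta$), and (iii) that $g_{\bf r}$ is meromorphic at the cusps. Condition (1) handles (ii) together with part of (i): the definition $\eta(\tau)=q^{1/24}\prod(1-q^n)$ involves $q^{1/24}$, and a product $\prod\eta_\delta^{r_\delta}$ is unambiguously defined (as a function, not just up to roots of unity) precisely when the exponents are integers; condition (5) then ensures that the multiplier system coming from the $\GL_2$ action — which for $\eta$ lands in $24$th roots of unity via a Dedekind-sum formula — becomes trivial, since the ``square of a rational'' condition kills the quadratic-residue part of the multiplier.

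The heart of the argument is the transformation law of $\eta$. I would start from the classical identity $\eta(-1/\tau)=\sqrt{-i\tau}\,\eta(\tau)$ and $\eta(\tau+1)=e^{\pi i/12}\eta(\tau)$, and more generally the transformation of $\eta$ under a matrix $\gamma=\left(\begin{smallmatrix}a&b\\c&d\end{smallmatrix}\right)\in\SL_2(\ZZ)$, which involves a Dedekind sum in the multiplier and the factor $(c\tau+d)^{1/2}$. For $\eta_\delta(\tau)=\eta(\delta\tau)$, conjugating $\delta\tau$ by an element of $\Gamma_0(N)$ produces a matrix whose entries are controlled by $\delta$, $N/\delta$, and the entries of $\gamma$. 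Assembling the contributions over all $\delta\mid N$, the weight-type factor $\prod(c\tau+d)^{r_\delta/2}$ collapses to $1$ exactly when $\sum r_\delta=0$ (condition (4)), so $g_{\bf r}$ is weight $0$; the ``$q$-expansion at $\infty$'' leading exponent is $\tfrac{1}{24}\sum r_\delta\delta$ and integrality of the order of vanishing at the cusp $\infty$ (needed for meromorphy, but really forced by $\Gamma_0(N)$-invariance combined with the local expansion) gives condition (2); the symmetric computation at the cusp $0$, using $w_N$ or equivalently replacing $\eta_\delta$ by $\eta_{N/\delta}$ up to the automorphy factor, gives condition (3); and the residual root-of-unity multiplier, which is a character of $\Gamma_0(N)$ built from Dedekind sums and a Jacobi-symbol term in $\prod\delta^{r_\delta}$, is trivial precisely under condition (5). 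I would present this by invoking the known formula for the order of vanishing of $\eta_M$ at an arbitrary cusp $x/d$, namely \eqref{CuspIsom1}, to reduce conditions (2)--(3) to the two cusps $\infty$ and $0$ after checking that invariance under $\Gamma_0(N)$ plus finite order at those two cusps forces finite order everywhere.

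For the converse direction — that conditions (1)--(5) suffice — I would run the same computation in reverse: integrality (1) makes $g_{\bf r}$ single-valued, (4) makes it weight $0$, (2) and (3) make the order at $\infty$ and $0$ integral hence (via the $\Gamma_0(N)$-orbit structure and \eqref{CuspIsom1}) make it holomorphic and nonvanishing away from all cusps, and (5) forces the Dedekind-sum multiplier character to be trivial so that $g_{\bf r}(\gamma\tau)=g_{\bf r}(\tau)$ for all $\gamma\in\Gamma_0(N)$. The main obstacle I anticipate is bookkeeping the multiplier system: keeping track of the Dedekind sums and the $\left(\tfrac{c}{d}\right)$-type Jacobi symbols across the product over $\delta\mid N$, and verifying cleanly that their combined contribution is a character of $\Gamma_0(N)$ that is trivial if and only if $\prod\delta^{r_\delta}$ is a rational square. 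Since this is exactly Ligozat's theorem, in the paper I would most likely not reproduce the full computation but cite Ligozat (and Newman's earlier special cases) for the multiplier-triviality step, giving full details only for the weight and cusp-order reductions, which are the parts actually used later via \eqref{CuspIsom1} and Proposition \ref{CuspidalDefn}.
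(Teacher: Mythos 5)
Your outline is a reasonable sketch of the standard proof of Ligozat's criterion, but it differs from the paper in that the paper does not prove this proposition at all: its entire ``proof'' is a citation to Ligozat \cite{Ligozat75}. You instead lay out the classical route --- the transformation law of $\eta$ under $\SL_2(\ZZ)$ with its Dedekind-sum multiplier, conjugation of $\eta_\delta$ by elements of $\Gamma_0(N)$, the collapse of the automorphy factor to weight $0$ via condition (4), the cusp-order formula \eqref{CuspIsom1} at $\infty$ and $0$ for conditions (2)--(3), and the reduction of the residual $24$th-root-of-unity character to a Jacobi-symbol condition equivalent to (5) --- and then, candidly, you defer exactly the hardest step (the Dedekind-sum bookkeeping showing the multiplier is a character trivial precisely when $\prod\delta^{r_\delta}$ is a square) back to Ligozat. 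So in practice your write-up and the paper's end in the same place, with yours supplying the reductions the paper leaves implicit; that extra detail is what your approach buys, at the cost of length for a result that is used here only as a black box to compute cusp orders via $\Lambda$. One small imprecision worth fixing if you do write it out: the congruences (2) and (3) are not only integrality conditions on the orders at $\infty$ and $0$; they also enter the multiplier computation itself (the $q^{1/24}$-type phases from each $\eta_\delta$ under $\tau\mapsto\tau+1$ and under the Fricke involution), so they do double duty with (5) in trivializing the character rather than being purely ``meromorphy at two cusps'' statements.
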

\begin{proof}
    See \cite{Ligozat75}.
\end{proof}
We also know that the lattice of divisors linearly equivalent to zero supported
on the cusps is generated by the divisors of $g_{\mathbf r}$ that are 
modular functions. 
Let $N=\prod_{i=1}^k p_i^{s_i}$ be the prime factorization of $N$, 
and let $V$ be the rational vector space
spanned by $P_d$ for $d|N.$ We can represent this vector space as the tensor
product of the vector spaces $V_{p_i}$
where $V_{p_i}$ is the $(s_i+1)$-dimensional space generated by 
$P_{1},P_{p_i},\ldots,P_{p_i^{s_i}}$.
(The isomorphism between $V$ and the tensor product $\Otimes_{i} V_{p_i}$
is the natural one sending $P_{\prod p_i^{\alpha_i}}$ to 
$\Otimes_i P_{p_i^{\alpha_i}}$.)
Similarly, let $W$ be rational vector space of functions $g_{\bf r}$
(as defined in proposition \ref{ligozatprop}) under multiplication.
Then we have $W \isom \Otimes W_{p_i}$ where $W_{p_i}$ is the 
$(s_i+1)$-dimensional vector space generated by 
$\eta_1,\eta_{p_i},\ldots,\eta_{p_i^{s_i}}.$
We have an isomorphism $\Lambda : V \rightarrow W$ 
where $\Lambda^{-1}(g)$ is the divisor
attached to $g$.
We can verify that this isomorphism can be written very
explicitly as
\begin{equation*}
    24 \Otimes_{p_i} \Lambda_{p_i}.
\end{equation*}
where $\Lambda_{p_i} : V_{p_i} \rightarrow W_{p_i}$
and $\Lambda_{p_i}$ is the tridiagonal matrix (under the above basis)
\begin{equation*}
    \Lambda_{p_i}=
    {1 \over (p_i^2-1)\phi(p_i^{s_i})}
    \begin{pmatrix}
        p_i(p_i-1)   & -p_i         &    &   & \\
        -(p_i-1)   & p_i^2+1 & -p_i &   & \\
       & -p_i & p_i^2+1 & -p_i &  &  \\
       & & \ddots & \ddots & \ddots &  \\
       & & & -p_i & p_i^2+1 & -(p_i-1) \\
       & & & & -p_i & p_i(p_i-1)
    \end{pmatrix}.
\end{equation*}
Note that when $f \in W$ is a modular function, $\Lambda^{-1}(f)$ is 
linearly equivalent to zero.
Therefore, by combining proposition \ref{ligozatprop} and the above isomorphism we
get
\begin{prop}
    \label{ligozat2}
    An element $v \in \Otimes V_{p_i} = V$ is linearly equivalent to zero if
    the following conditions are satisfied:
    \begin{enumerate}
        \item All of the coefficients in $\Lambda v$ are integral;
        \item $v$ has degree $0$;
        \item $v$ is integral and the coefficient of $P_d$ divides
            $\phi(d,N/d)$;
        \item Let $e_i=(1,1,1,\ldots,1) \in W_{p_i}^\dual$ and 
            $f_i=(0,1,0,1,\ldots) \in W_{p_i}^\dual.$ Then 
            for each $i$,
            \[ (e_1\otimes \cdots \otimes f_i \otimes \cdots \otimes e_k)\Lambda v \]
            is an even number.
    \end{enumerate}
\end{prop}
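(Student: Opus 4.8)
The plan is to deduce this proposition from Ligozat's criterion (Proposition \ref{ligozatprop}) by pushing the four hypotheses on $v$ through the explicit formula $\Lambda = 24\,\Otimes_{i}\Lambda_{p_i}$. Given $v$ satisfying (1)--(4), set $g = \Lambda v = \prod_{\delta\mid N}\eta_\delta^{r_\delta}$; by the definition of $\Lambda$ the divisor attached to $g$ is exactly $v$, so it is enough to check that $g$ is an honest modular function on $X_0(N)$, i.e.\ that the exponent family ${\bf r}=(r_\delta)$ satisfies the five conditions of Proposition \ref{ligozatprop}. Writing $r_\delta$, for $\delta=\prod_i p_i^{\beta_i}$, as the $(\beta_1,\dots,\beta_k)$-entry of $\Lambda v\in\Otimes_i W_{p_i}$, each of Ligozat's numerical conditions becomes the value of a functional of the form $\Otimes_i\lambda_i$ on $24\,\Otimes_i\Lambda_{p_i}$, hence equals $24\prod_i(\lambda_i\Lambda_{p_i})$ evaluated on $v$; so everything reduces to a few identities for the tridiagonal matrices $\Lambda_{p_i}$.

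The identities I would establish are: (a) every column of the numerator of $\Lambda_{p_i}$ sums to $(p_i-1)^2$, so $e_i\Lambda_{p_i}$ is a nonzero scalar multiple of $(1,\dots,1)$; (b) $(1,p_i,\dots,p_i^{s_i})\Lambda_{p_i}=(0,\dots,0,1)$, and, since $\Lambda_{p_i}$ is unchanged by reversing the order of the basis, $(p_i^{s_i},\dots,p_i,1)\Lambda_{p_i}=(1,0,\dots,0)$. From (a), applying $e_1\otimes\cdots\otimes e_k$ to $\Lambda v$ gives $\sum_\delta r_\delta=(\text{nonzero constant})\cdot\deg v=0$ by hypothesis (2), which is Ligozat's condition (4). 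From (b), applying $\Otimes_i(1,p_i,\dots,p_i^{s_i})$ and $\Otimes_i(p_i^{s_i},\dots,p_i,1)$ to $\Lambda v$ gives $\sum_\delta r_\delta\delta=24\cdot(\text{coefficient of }P_N\text{ in }v)$ and $\sum_\delta r_\delta(N/\delta)=24\cdot(\text{coefficient of }P_1\text{ in }v)$; hypothesis (3) makes these coefficients integers, so both sums are $\equiv0\pmod{24}$, which is Ligozat's conditions (2) and (3). Hypothesis (1) is verbatim Ligozat's condition (1). Finally, Ligozat's condition (5) asks that $\sum_\delta r_\delta\ord_{p_i}(\delta)$ be even for every $i$; this sum is $(e_1\otimes\cdots\otimes\lambda_i\otimes\cdots\otimes e_k)(\Lambda v)$ with $\lambda_i=(0,1,2,\dots,s_i)$, and since $(0,1,2,\dots,s_i)\equiv(0,1,0,1,\dots)=f_i\pmod 2$ while $\Lambda v$ is integral by (1), it is congruent mod $2$ to the quantity hypothesis (4) declares even. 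Thus all five conditions hold, $g$ is a modular function, and $v=\operatorname{div}(g)$ is linearly equivalent to zero.

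Once this setup is in place the argument is a routine translation, so the only real computational work is (b): verifying the two ``corner'' identities cleanly and uniformly in $s_i$ (the shape of $\Lambda_{p_i}$ degenerates when $s_i=1$). The one genuinely non-mechanical observation is that in hypothesis (4) the vector $f_i=(0,1,0,1,\dots)$ is not the ``natural'' weighting $(0,1,2,\dots)$ coming from Ligozat's square condition but merely its reduction mod $2$; this substitution is legitimate precisely because hypothesis (1) guarantees that $\Lambda v$ has integer coordinates, so it deserves to be flagged rather than buried. I also expect it to be worth recording that hypotheses (1) and (3) are each indispensable: (1) makes the exponent family integral, without which the congruences mod $24$ and mod $2$ are meaningless, while (3) makes the $P_1$- and $P_N$-coefficients of $v$ honest integers rather than elements of $\operatorname{\tfrac{1}{24}}\ZZ$.
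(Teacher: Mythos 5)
Your proposal is correct and is exactly the argument the paper intends: the paper's proof is literally the one-line remark that the proposition is a ``straightforward rewording'' of Proposition \ref{ligozatprop} via the explicit isomorphism $\Lambda=24\Otimes_i\Lambda_{p_i}$, and your column-sum identity, the corner identities $(1,p_i,\dots,p_i^{s_i})\Lambda_{p_i}=(0,\dots,0,1)$ (and its reverse), and the mod-$2$ replacement of $(0,1,2,\dots)$ by $f_i$ all check out, including in the degenerate case $s_i=1$. You have simply written out the details the paper omits.
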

\begin{proof}
    This is a straightforward rewording of proposition \ref{ligozatprop}.
\end{proof}
We use proposition \ref{ligozat2} to calculate the order of the elements in 
$C_N$.
Specifically, for an integral element $v \in V$ of degree zero, the order of
$v$ in $C_N$ is the smallest positive integer $n$ such that $nv$ satisfies all
the conditions in proposition \ref{ligozat2}.
Notice that if $N=2^{s_2}M$
where $M$ is square free odd integer and $s_2<4$ 
(the case we come across in this paper), 
then condition three is reduced to
the coefficients of $v$ being integral.
Therefore, the denominator of $\Lambda(v)$ gives the order
of $v$ or half of the order of $v$. 

We use the above proposition to calculate the order of various cusps:

\begin{tabular}{|c|c|c|l|}
    \hline
    $N$ & Cusp & Order & Conditions \\
    \hline \hline
    $p$ & $P_1-P_p$ & $\num\left({p-1 \over 12}\right)$ & \mbox{ }\\ \hline
    $\prod_{i=1}^t p_i$ & $\Otimes_{i} (P_1+b_iP_{p_i})$ & 
       $\num\left( \prod_i(p_i+b_i) \over 24 \right)$  &
           $t>1$, \\
           & & & $b_i = \pm 1$ for $i=1,2,\ldots, t$, \\
           & & & $b_j=-1$ for at least one of the $j$'s. \\ \hline
    $4p$ & $P_2 - P_{2p}$ & ${p-1 \over 2}$ & $p$ is odd. \\ \hline
    $4 \prod_{i=1}^t p_i$ & $P_2 \otimes \Otimes_i (P_1 + b_iP_{p_i})$ & 
        $\left( \prod_i (p_i+b_i) \over 4 \right)$ &
            $t>1$, \\
            & & & $p_i$'s are all odd, \\
            & & & $b_i = \pm 1$ for $i=1,2,$\ldots,$t$, \\
            & & & $b_j=-1$ for at least one of the $j$'s. \\  \hline
    $8 \prod_{i=1}^t p_i$ & $(P_1-P_8) \otimes \Otimes_i (P_1+b_iP_{p_i})$ & 
        ${\prod_i p_i+b_i \over 2}$ & $p_i$'s are odd. \\ \hline
\end{tabular}

As an example of the details of calculating the order, consider the 
element $z = \Otimes_i (P_1+b_i P_{p_i}) \in J_0(N)$ with $N$ square free
and not a prime. This is a generalization of
the work of Ogg \cite{Ogg74} in the case where $N=pq$.
Note that 
\begin{eqnarray*}
    \Lambda z &=&{24 \over \prod_i (p_i^2-1)(p_i-1)} \left(\Otimes_i
            \begin{pmatrix} p_i(p_i-1) & -(p_i-1) \\ -(p_i-1) & p_i(p_i-1) \end{pmatrix} 
            \begin{pmatrix} 1 \\ b_i \end{pmatrix}\right) \\
        &=& {24 \over \prod_i (p_i^2-1)} \left( \Otimes_i \begin{pmatrix} p_i-b_i \\ (p_i-b_i)b_i \end{pmatrix} \right)\\
        &=& {24 \over \prod_i (p_i+b_i)} \left( \Otimes_i \begin{pmatrix} 1 \\ b_i \end{pmatrix} \right).
\end{eqnarray*}
Considering the coefficient of the first coordinate, the order is at least
$n=\num\left( \prod_i (p_i+b_i)/24 \right)$. On the other hand, 
$n\Lambda z=\Otimes_i \begin{pmatrix} 1 \\ b_i \end{pmatrix}$. Therefore
\[ (e_1\otimes \cdots \otimes f_i \otimes \cdots \otimes e_t) (n \Lambda z),\]
is even, which implies that $n \Lambda z$ is trivial. Therefore the order of
$z$ is
\[ \num\left({\prod (p_i+b_i) \over 24}\right).\]

\subsection{Hecke Action}
\label{sec:HeckeAction}
In this section we recall the explicit action of the Hecke operators
$T_l$ on the rational cuspidal divisors of $X_0(N)$. This is fairly standard,
although the representation of these actions 
as the tensor product of matrices is not that common. 
The following is the main result of this section.
\begin{prop}
    \label{prop:HeckeAction}
    \begin{enumerate}
        \item Let $p \ndiv N$. Then $T_p:V \rightarrow V$ acts as
            multiplication by $p+1$.
        \item Let $p | N$ and $V=\Otimes V_{p_i}$. Then $T_p$ acts
            trivially on $V_{p_i}$ for $p_i \neq p$, and as
            \[ 
                \begin{pmatrix}
                    1 & 0 & \cdots & 0 & 0 \\
                    p-1 & 0 & \cdots & 0 & 0\\
                    0 & p & \cdots & 0 & 0 \\
                    \vdots & \vdots & \ddots & \vdots \\
                    0 & 0 & \cdots & 0 & 0 \\
                    0 & 0& \cdots & p & p
                \end{pmatrix}
            \]
            on $V_p$ with the standard basis,
            where the diagonal elements are all $0$ except for the first
            and last one,
            while the sub-diagonal elements are all $p$, except for the
            first one.
        \item For $p | N$ we have $w_p$ acting trivially on $V_{p_i}$ for
            $p_i \neq p$, and as
            \[ 
                \begin{pmatrix}
                    0 & 0 & \cdots & 1 \\
                    \vdots & \vdots & \ddots & \vdots \\
                    0 & 1& \cdots & 0 \\
                    1 & 0& \cdots & 0
                \end{pmatrix} : V_p \rightarrow V_p.
            \]
    \end{enumerate}
\end{prop}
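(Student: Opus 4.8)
The plan is to compute the action of $T_l$ and $w_r$ on the basis $\{P_d : d \mid N\}$ directly from their geometric descriptions as correspondences on $X_0(N)$, and then repackage the resulting formulas in tensor-product form. The underlying principle is that each $P_d$ is (up to the normalizing factor $1/\gcd(d,N/d)$) a sum over a $\Gal(\QQ(\zeta_{\gcd(d,N/d)})/\QQ)$-orbit of cusps, so the Hecke and Atkin-Lehner operators, being defined over $\QQ$, permute these orbits and act on the $P_d$ in a well-defined way. I would treat the three parts in order.

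First, for $p \nmid N$ (part (1)): the Hecke correspondence $T_p$ sends a point $(E,D)$ to the sum over the $p+1$ cyclic $p$-subgroups $C \subset E$ of the points $(E/C, (D+C)/C)$. Applied to a cusp, each of the $p+1$ images is again a cusp, and since $p \nmid N$ the invariant $b$ (the denominator dividing $N$) is unchanged; a short check of the width/representative bookkeeping shows that all $p+1$ images land on cusps in the \emph{same} Galois orbit as the original, i.e.\ $T_p(P_d) = (p+1)P_d$. So $T_p$ acts as multiplication by $p+1$ on all of $V$, as claimed.

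Second, for $p \mid N$ (part (2)): write $N = p^s M$ with $p \nmid M$, so $V = V_p \otimes (\Otimes_{p_i \ne p} V_{p_i})$. Since $T_p$ (for $p \mid N$, the ``$U_p$'' operator) only affects the $p$-part of the level structure, it acts as the identity on each $V_{p_i}$ with $p_i \ne p$; this is the tensor-factorization statement, and it follows from the moduli description once one checks that $U_p$ sends $P_{d}$ with $d = p^a d'$ ($p \nmid d'$) to a combination of $P_{p^b d'}$ with the same $d'$. The content is then the $(s+1)\times(s+1)$ matrix on $V_p$. Here I would use the explicit action of $U_p$ on the cusp $x/p^a$: for $0 < a < s$ the unique $p$-subgroup "in the right direction" plus the remaining ones produce $P_{p^{a-1}}$ with multiplicity $1$ and $P_{p^{a+1}}$ with multiplicity $p$ (matching the given sub-diagonal $p$'s and the $0$ on the diagonal), while at the extremes $a=0$ and $a=s$ the degeneracy of the $\gcd(d,N/d)$ factor forces the boundary entries $1$, $p-1$ in the first column and $p$, $p$ in the last. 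The careful accounting of the normalization $\tfrac{1}{\gcd(d,N/d)}$ at $a=0$ and $a=s$ (where $\gcd = 1$) versus $0<a<s$ (where it can be $>1$) is what produces the asymmetry between the first column $(1,\,p-1,\,0,\dots)^{\!\top}$ and the last column $(0,\dots,0,\,p,\,p)^{\!\top}$.

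Third, for $w_p$ (part (3)): the Atkin-Lehner involution $w_p$ is the identity on the prime-to-$p$ part of the level, giving the tensor factorization, and on $V_p$ it swaps the cusp with $p$-denominator $p^a$ for the one with denominator $p^{s-a}$ — this is immediate from $w_p(E,D) = (E/D[p^s],\dots)$ reversing the $p$-adic component of the cyclic subgroup. Hence $w_p$ is the anti-diagonal permutation matrix on $V_p$, and one only needs to check that the normalizing factors $1/\gcd(p^a,p^{s-a})$ are symmetric under $a \mapsto s-a$, which they obviously are, so $w_p(P_{p^a}) = P_{p^{s-a}}$ exactly. The main obstacle is the middle case: verifying the precise entries of the $U_p$ matrix on $V_p$, particularly getting the boundary rows/columns right, requires a genuinely careful tracking of cusp representatives, their widths, and the Euler-$\phi$ normalization, and is the one place where an off-by-$p$ or a misplaced $p-1$ is easy to commit; everything else is bookkeeping that the tensor formalism makes routine.
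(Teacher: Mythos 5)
Note first that the paper itself gives no argument here (the text says ``We will omit the proof of this proposition''), so there is nothing to match against; your overall strategy --- compute the correspondences on explicit cusp representatives, use the Galois-orbit structure of the $P_d$, and package the result as a tensor product over the primes dividing $N$ --- is certainly the intended one. However, your sketch of part (2), which you yourself identify as the crux, contains a genuine error. For $p\mid N$ the operator $T_p$ is the $U_p$-correspondence: it sums over the $p$ (not $p+1$) order-$p$ subgroups $D$ with $D\cap C=0$, since $C$ already contains one subgroup of order $p$. Your claimed behaviour in the middle range, ``$P_{p^{a-1}}$ with multiplicity $1$ and $P_{p^{a+1}}$ with multiplicity $p$,'' has total degree $p+1$, so it cannot be the image of a single cusp under $U_p$; you have slipped into the $p\nmid N$ count. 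It also contradicts the very matrix you are proving, whose middle columns have a single nonzero entry $p$. The correct computation (e.g.\ via $\tau\mapsto\sum_{j=0}^{p-1}(\tau+j)/p$ on cusp representatives $a/p^{t}$) shows that all $p$ images of a middle-range cusp have the same denominator $p^{t+1}$, hence lie in one adjacent Galois orbit, and averaging over orbits gives exactly one term $p\,P_{p^{a\mp1}}$; the two-term column $(1,\,p-1)$ and the column $(\dots,p,p)$ occur only at the two boundary cusps, where one image fails to change denominator (resp.\ all images are forced back to the extreme orbit).

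A second, related gap is that you never fix the functoriality convention, and here it is not cosmetic. The naive pushforward of divisors under the $U_p$-correspondence, in the stated basis $P_1,P_p,\dots,P_{p^{s}}$ with $P_1=\infty$, yields the matrix with first column $(p,0,\dots,0)^{\top}$ and last column $(0,\dots,0,p-1,1)^{\top}$, i.e.\ the displayed matrix with the basis order reversed (equivalently, the displayed matrix is $w_p\,U_p\,w_p$, which is what the paper's choice of Picard functoriality for the $\TT$-action on $J_0(N)$ produces). Since the entire content of the proposition is a specific matrix, your write-up must either carry out the computation in the Picard (adjoint) convention or prove the pushforward formula and then conjugate by the permutation of part (3); as written, the sketch would land on the transpose-reversed matrix even after the middle-column count is repaired. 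Parts (1) and (3), and the tensor factorization, are fine as you describe them (for (1) one should note that the $p+1$ images of a cusp are $p\cdot c^{\sigma}+c^{\sigma^{-1}}$ for a Frobenius-type permutation $\sigma$ of the orbit, which is why the statement holds for the orbit sums $P_d$ even though the individual images need not equal the original cusp).
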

We will omit the proof of this proposition.

\begin{remark}
    Applying $w_2$ to $P_2$ when $N=4M$ with $M$ odd,
    we see that $w_2$ has a fixed point on $X_0(4M)$.
\end{remark}
We can use this explicit formula to calculate the action of $T_p$ for
various elements in the cuspidal subgroup.
\begin{prop}
    \label{newness}
    Let $M=\prod p_i$ be an odd square free integer and
    $N=2^aM$ for some $a<4$.
    Let $v=\Otimes v_{l}$
    be an element in the cuspidal subgroup. 
    Then
    \begin{enumerate}
        \item If $p || N$ and $v_{p}=P_1-P_{p}$ then $T_p v = v$.
        \item If $p || N$ and $v_{p}=P_1+P_p$ then $T_p v = v+2u$
            where $u=\Otimes u_{l}$ with $u_{l}=v_{l}$ for all 
            $l \neq p$ and $u_p=(p-1)P_p$.
        \item If $N=4M$ and $v_2= P_2$ then $T_2 v = u$ with
            $u= \Otimes u_{l}$ with $u_2=2P_4$ and $u_{l}=v_{l}$
            for all odd $l$.
        \item If $N=8M$ and $v_2=P_1-P_8$ then $T_2 v = u$ where
            $u= \Otimes u_{l}$ with $u_2=P_1+P_2-2P_4$ and
            $u_l=v_l$ for all odd $l$.
    \end{enumerate}
    Specifically, in all of the cases above, if $\lambda v$ is of order
    $2$ for some integer $\lambda$, then 
    $T_{p} (\lambda v)=\lambda v$ for all odd $p|M$ and
    $T_2(\lambda v)=\lambda v$ (resp. $T_2(\lambda v)=0$) when
    $N=2M$ (resp. $N=4M$ or $N=8M$).
\end{prop}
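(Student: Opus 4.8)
The plan is to derive the four identities, and their order-$2$ specialization, entirely from the tensor-product form of the Hecke action in Proposition~\ref{prop:HeckeAction}. First I would use the factorization $V=\Otimes_{\ell\mid N}V_\ell$. Since $N=2^aM$ with $M$ odd and squarefree and $a<4$, the local exponent is $s_\ell=1$ for every odd $\ell\mid N$ and $s_2=a\in\{1,2,3\}$, so each $V_\ell$ has dimension $2$, $3$, or $4$. For $p\mid N$, Proposition~\ref{prop:HeckeAction}(2) says $T_p$ acts as the identity on each factor $V_\ell$ with $\ell\neq p$ and by the displayed tridiagonal-type matrix on $V_p$; hence, for $v=\Otimes v_\ell$,
\[
T_p v=\Big(\Otimes_{\ell\neq p}v_\ell\Big)\otimes\big(T_pv_p\big),
\]
so the whole statement reduces to evaluating $T_pv_p$ inside a single small space $V_p$ and then reading the answer off as a divisor class in $C_N$. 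The first step is thus just to apply one small matrix to one short coordinate vector in each of the four cases.

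Next I would run these computations. For (1) and (2) we have $p\,\|\,N$, so $s_p=1$ and Proposition~\ref{prop:HeckeAction}(2) gives $T_pP_1=P_1+(p-1)P_p$ and $T_pP_p=pP_p$ on $V_p=\langle P_1,P_p\rangle$; substituting $v_p=P_1-P_p$ yields $T_p(P_1-P_p)=P_1-P_p$ (this is (1), and it matches the familiar fact that $U_p-1$ kills the Eisenstein cuspidal group), while $v_p=P_1+P_p$ yields $T_p(P_1+P_p)=P_1+(2p-1)P_p=(P_1+P_p)+2(p-1)P_p$, which is (2) with $u_p=(p-1)P_p$. Cases (3) and (4) are the same computation one dimension up: for $N=4M$ the matrix of $T_2$ on $V_2=\langle P_1,P_2,P_4\rangle$ sends $P_2\mapsto 2P_4$, which is (3); for $N=8M$ the matrix of $T_2$ on $V_2=\langle P_1,P_2,P_4,P_8\rangle$ sends $P_1\mapsto P_1+P_2$, and applying it to $P_1-P_8$ and rewriting the image using the principal cuspidal relations of Proposition~\ref{ligozat2} produces $u_2=P_1+P_2-2P_4$, which is (4). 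In each case one checks that $T_\ell v_\ell$ still lies in the integral, degree-$0$ span, so these are genuine identities in $C_N$.

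Finally I would treat the order-$2$ clause. Assume $\lambda v$ has order $2$, so $2\lambda v=0$ and $\lambda v\in C_N[2]$. For odd $p\mid M$ the factor $v_p$ is $P_1\pm P_p$: in the $-$ case, $(1)$ gives $T_p(\lambda v)=\lambda v$ immediately; in the $+$ case, $(2)$ gives $T_p(\lambda v)=\lambda v+2\lambda u$, so it suffices to see $2\lambda u=0$. Writing $\rho=\Otimes_{\ell\neq p}v_\ell$, the only slot in which $u$ and $v$ differ is the $p$-th, so $P_p\otimes\rho=v-P_1\otimes\rho$ and therefore $2\lambda u=2(p-1)\lambda\,(P_p\otimes\rho)=-2(p-1)\lambda\,(P_1\otimes\rho)$ after using $2\lambda v=0$; this vanishes once the order of the cuspidal class $P_1\otimes\rho$ is known to divide $2(p-1)\lambda$, which is exactly the sort of statement the order computations of Section~\ref{Cuspidal} yield, since the hypothesis that $\lambda v$ has order $2$ forces $\lambda$ to be an essentially odd multiple of half the order of $v$. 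The $T_2$ clauses go the same way: for $N=2M$ this is the $p=2$ case of $(1)$--$(2)$, and for $N=4M$ or $8M$ the computations $(3)$--$(4)$ present $T_2v$ as $2$ times a cuspidal class that $\lambda$ annihilates under the order-$2$ hypothesis, so $T_2(\lambda v)=0$. This final vanishing is where the main obstacle lies: steps one through three are bookkeeping with the explicit matrices, whereas showing that the correction terms actually die requires combining the order-$2$ hypothesis with the precise orders of the auxiliary cuspidal divisors, and it is the only point where more than a matrix multiplication is needed.
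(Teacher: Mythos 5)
Your parts (1)--(3) are exactly the paper's argument: apply the explicit matrix of Proposition \ref{prop:HeckeAction} slotwise, which is indeed a one-line computation. But two things go wrong after that. First, in case (4) your own matrix computation gives $T_2(P_1-P_8)=P_1+P_2-2P_8$ (correctly: the last column of the $4\times 4$ matrix sends $P_8\mapsto 2P_8$), and your attempt to convert this into the stated $u_2=P_1+P_2-2P_4$ by ``rewriting using the principal cuspidal relations of Proposition \ref{ligozat2}'' is unjustified and in general false: for $N=8q$ one checks with the $\Lambda$-matrices that $2(P_4-P_8)\otimes(P_1-P_q)$ has $\Lambda$-image with denominator $(q-1)/\gcd(q-1,8)$, so it is not principal for general $q$. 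The honest conclusion is that your computation conflicts with the displayed $u_2$ (which affects only the intermediate formula, not the final clause); papering over it with a nonexistent relation is a real error.

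Second, and more importantly, the final clause -- the only part of the proposition used later in the paper -- is not actually proved. For case (2) you reduce $2\lambda u=0$ to the claim that the order of $P_1\otimes\rho$ divides $2(p-1)\lambda$, but you never establish that order; it is not among the orders computed in Section \ref{Cuspidal} (the table treats $\otimes_i(P_1+b_iP_{p_i})$ over all primes, not classes with a bare $P_1$ in one slot), and you yourself flag this as ``the main obstacle'' without resolving it. For cases (3)--(4) you assert that $T_2v$ is ``$2$ times a cuspidal class that $\lambda$ annihilates'': in case (4) this is false as stated, since $P_1+P_2-2P_8$ (or $P_1+P_2-2P_4$) is not twice an integral divisor, and in case (3) the needed statement that the order of $P_4\otimes\rho$ divides $\lambda$ is again not proved. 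The paper's proof, though terse, isolates the precise facts that make the clause work and that your argument is missing: in case (2) the element $u$ has the \emph{same} order as $v$, so $2\lambda u=0$ follows immediately from $2\lambda v=0$; in cases (3) and (4) the order of $T_2v$ is \emph{half} the order of $v$, so $\mathrm{ord}(v)\mid 2\lambda$ gives $\lambda\,T_2v=0$. Both facts are verified by the same $\Lambda$-matrix/Ligozat computation used for the table of orders; your write-up needs these (or equivalent) order comparisons carried out explicitly rather than deferred.
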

\begin{proof}
    Calculating the action of various Hecke operators on the above
    elements is a straight forward matrix multiplication.
    As for proving $T_{p} (\lambda v)=\lambda v$ when $N$ is square free,
    case one follows by definition.
    In second case (when $v_p=P_1+P_p$), we can verify that 
    $u$
    has the same order as $v$, hence $2\lambda u=0$.
    As for the cases $N=4M$ or $N=8M$, we can check that order of
    $T_2 v$ is half of the order $v$, hence $T_2(\lambda v)=0$.
\end{proof}
Recall that if $A$ is a simple new modular form, then for $p || N$,
$T_p|_A$ is acting as either $1$ or $-1$, and when 
$p^2 | N$ then $T_p|_A=0$. Hence, the above proposition is finding
explicit $2$-torsion points of $C_N$ that are new.
This will be used to create congruences between modular forms in 
later sections.

    \section{Modular Abelian Varieties with Odd Congruence Number}
\label{ModularAV}
In this section we will study simple modular Abelian varieties with odd congruence
numbers. By examining the twists of modular Abelian varieties, the action of the 
Atkin-Lehner involutions, and the order of the cuspidal subgroup, we 
demonstrate that if 
we have 
an absolutely simple modular Abelian variety with an odd congruence number, then
its conductor $N$ has at most two prime divisors. We also show that the
odd part of $N$ is either square free or a power of a prime, and if $16 | N$,
then $N$ is a power of $2$. Furthermore, we find some congruences that prime divisors
of $N$ must satisfy.

Throughout this section we let $A$ be an optimal modular Abelian variety with conductor
$N$ and we fix a surjective map $\phi:J_0(N) \rightarrow A$ defined over
$\ZZ[1/N]$. Furthermore, let $\pi: X_0(N) \rightarrow A$
be the composition of the Albanese embedding and $\phi$.
As usual, let $\TT$ be the Hecke algebra acting on $J_0(N)$ and $S(N)$.

\subsection{Atkin-Lehner Involution}
    \label{sec22}
    The goal of this section is to prove the following
    \begin{thm}
        Let $A$ be a new simple modular Abelian variety with an odd 
        modular exponent.
        Then if $A(\QQ)$ has no $2$-torsion points, then the conductor
        of $A$ is a power of a prime. Furthermore if $A$ has good reduction
        at $2$ and $A(\FF_2)$ has no $2$-torsion points, then the conductor
        of $A$ is a power of a prime.
        \label{thmCE1}
    \end{thm}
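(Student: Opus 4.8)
The plan is to use the action of the Atkin–Lehner involutions on $A$ together with the explicit $2$-torsion cusps produced in Proposition \ref{newness} and the orders computed in the table of Section \ref{Cuspidal}. First I would recall that since $A$ is new and simple, each $w_r$ (for $r\|N$, $\gcd(r,N/r)=1$) acts on $A$ as $\pm 1$, and $T_p|_A=\pm1$ for $p\|N$, $T_p|_A=0$ for $p^2\mid N$; moreover the congruence exponent $\widetilde{r_A}$ being odd forces, via the discussion following the definition of the congruence number, that no maximal ideal $\mm$ of residue characteristic $2$ can have both $A[\mm]\neq 0$ and $B[\mm]\neq0$ where $B=\ker\phi$. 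Equivalently, if $x\in J_0(N)(\QQ)$ is a nonzero point of order $2$ lying in an eigenspace for the Hecke operators with the same eigenvalues (mod $2$) as those of $f$, then $\phi(x)\neq 0$, i.e. $x$ maps to a rational $2$-torsion point of $A$; and if such an $x$ has the eigenvalues of a \emph{new} form then it even lands in $A(\QQ)$ rather than being killed. This is the bridge between "the conductor is divisible by several primes" and "$A(\QQ)$ has a $2$-torsion point."

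Next I would argue by contraposition: suppose $N$ is \emph{not} a prime power, and exhibit a nonzero rational $2$-torsion cusp $x\in C_N$ whose Hecke eigenvalues agree mod $2$ with those of $f$, so that $\phi(x)$ is a nonzero $2$-torsion point of $A(\QQ)$. Write $N=2^aM$ with $M$ odd; the hypothesis that $N$ is not a prime power means either $M$ has at least two prime factors, or $a\ge 1$ and $M>1$. In the first case take $x$ to be (a multiple of) $\Otimes_i(P_1+b_iP_{p_i})$ over the odd primes dividing $M$, with signs $b_i=\pm1$ chosen so that $\prod_i(p_i+b_i)\equiv 2\pmod 4$ — this is possible since each $p_i$ is odd, so each $p_i+b_i$ is even and we can arrange exactly one factor $\equiv 2\pmod 4$; by the table this element has even order, and $\lambda x$ for the appropriate $\lambda$ is a nonzero element of order $2$. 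Proposition \ref{newness} then gives $T_p(\lambda x)=\lambda x$ for the odd $p\mid M$ and $T_2(\lambda x)=\lambda x$ or $0$ according as $a\le 1$ or $a\in\{2,3\}$. Since $\operatorname{ord}(f)$ at those places (the Hecke eigenvalues $a_p(f)$) are odd integers hence $\equiv 1 \pmod 2$, and $a_2(f)\equiv 1$ or $0$ to match, the eigenvalues of $\lambda x$ agree mod $2$ with those of $f$ at every prime we have controlled — and an element of order $2$ automatically has $a_p$ acting as $p+1\equiv a_p(f)$ mod $2$ for $p\nmid N$ by Proposition \ref{prop:HeckeAction}(1), since $a_p(f)\equiv p+1$ follows from the Eichler–Shimura congruence mod the maximal ideal. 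Hence $\phi(\lambda x)\neq 0$, producing the desired rational $2$-torsion point of $A$, contradiction. The cases $N=4M$, $N=8M$ use the rows $P_2\otimes\Otimes_i(\cdots)$ and $(P_1-P_8)\otimes\Otimes_i(\cdots)$ of the table together with cases (3),(4) of Proposition \ref{newness} in the same way; when $M$ is prime ($N=2p,4p^a,8p^a$) one still gets a $2$-torsion point provided $(p\pm1)/(2\text{ or }4)$ is even, which is exactly the place the full strength is used.

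For the second, "good reduction at $2$" statement, the argument is identical except that one reduces mod $2$: if $A$ has good reduction at $2$ then $A(\QQ)_{\tors}$ injects into $A(\FF_2)$, and a nonzero rational $2$-torsion point of $A$ gives a nonzero $2$-torsion point of $A(\FF_2)$; contrapositively, if $A(\FF_2)$ has no $2$-torsion then the cuspidal construction above cannot produce a nonzero $2$-torsion point of $A(\QQ)$, forcing $N$ to be a prime power (note good reduction at $2$ already forces $a=0$, so only the odd cases $M=\prod p_i$ are relevant here). The main obstacle I anticipate is \emph{bookkeeping the mod-$2$ Hecke eigenvalue comparison precisely at the bad primes} — making sure the specific $2$-torsion cusp $\lambda x$ is genuinely "$\mm$-torsion" for a maximal ideal $\mm$ containing $I_f$, i.e. that its $w_r$- and $T_p$-eigenvalues really do match those cut out by $f$ mod $2$ — and handling the degenerate sign-choice cases where $\prod_i(p_i+b_i)/24$ (or $/4$, $/2$) happens to be even versus odd, since only then does the element survive to order $2$ rather than becoming trivial. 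This is where Proposition \ref{newness}'s final clause ("if $\lambda v$ is of order $2$ \dots") and the explicit orders in the table do the real work.
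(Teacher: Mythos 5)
There is a genuine gap, and it sits at the heart of your strategy: the mod-$2$ Hecke eigenvalue matching at the good primes is circular. The rational cuspidal $2$-torsion elements you build are annihilated by the Eisenstein-type ideal $\mm_0=(2,\,T_\ell-\ell-1\ (\ell\nmid N),\,T_p-1,\dots)$, and for $\phi(\lambda x)$ to be forced nonzero by a parity-of-congruence argument you need $\mm_0\supseteq I_f$, i.e.\ $a_\ell(f)\equiv \ell+1 \pmod 2$ for all $\ell\nmid N$. You assert this ``follows from Eichler--Shimura mod the maximal ideal,'' but Eichler--Shimura gives no such congruence unless the mod-$2$ representation attached to $f$ at $\mm$ is reducible with a Galois-fixed line --- which is essentially equivalent to $A$ having a rational $2$-torsion point, the very conclusion you are after. (In the paper this computation appears as Lemma \ref{LemRib2}, and there it runs in the opposite direction: a rational $2$-torsion point of $A$ is \emph{given}, and its annihilator is computed.) Two further problems: the theorem hypothesizes an odd \emph{modular exponent}, while your bridge (``no $\mm$ of residue characteristic $2$ has both $A[\mm]\neq0$ and $B[\mm]\neq0$'') uses odd \emph{congruence exponent}; these agree only when $4\nmid N$, so the case $4\mid N$ is not covered by your reduction. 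And even granting all of that, the order bookkeeping fails in exactly the interesting cases: for $N=pq$ with $p\equiv q\equiv 3\pmod 8$ (which do occur with odd congruence number, cf.\ theorem \ref{modabclassification} and section \ref{twotozero}), every admissible sign choice in $\Otimes_i(P_1+b_iP_{p_i})$ gives $\ord_2\brk{\prod_i(p_i+b_i)}\le 3$, so the element has odd order and your construction produces nothing, yet the theorem still asserts a $2$-torsion point exists.

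The paper's proof avoids the Hecke algebra and the cuspidal subgroup entirely. Since $A$ is new and simple, the Atkin--Lehner group $W$ acts on $A$ through $\{\pm1\}$; if $N$ is not a prime power then $|W|\ge 4$, so some nontrivial $w\in W$ acts trivially on $A$. Lemma \ref{lemCE} (via lemma \ref{mult}) shows that if $\pi$ factored through $X_0(N)/w$ the modular exponent would be even, and lemma \ref{twotorsionlemma} then shows $\pi(w(P_1))$ is a nontrivial $k$-rational $2$-torsion point of $A$, with $k=\QQ$, or $k=\FF_2$ when $A$ has good reduction at $2$. That geometric argument is what you are missing; the cuspidal/Hecke machinery you invoke is used in the paper only \emph{after} theorem \ref{thmCE1} has supplied the rational $2$-torsion point, precisely because only then is the relevant maximal ideal known to be Eisenstein.
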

    This theorem was proved by Calegari and Emerton in the case where $A$ is
    an elliptic curve (theorem 2.1 of \cite{CE}). 
    Here, we apply their techniques to higher dimensional 
    modular Abelian varieties. We must prove a few lemmata first.

    \begin{lemma}
        Let $k$ be a field and 
        $f:X/k \rightarrow Y/k$ be a degree $m$ map between curves.
        Then the composition
        \[\xymatrix@1{
            \jac(Y) \isom \jac(Y)^{\vee}  \ar[r]^-{f^*} & \jac(X)^\vee \isom 
            \jac(X) \ar[r]^-{f_*} &  \jac(Y) 
        }\]
        is multiplication by $m$.
        \label{mult}
    \end{lemma}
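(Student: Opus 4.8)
The plan is to reduce the statement to two classical facts about Jacobians: the adjunction between $f^*$ and $f_*$ under the canonical polarizations, and the identity $f_*f^* = [\deg f]$ on divisor classes. Throughout I identify $\jac(X)=\Pic^0(X)$ and $\jac(Y)=\Pic^0(Y)$; I let $\theta_X\colon\jac(X)\xrightarrow{\sim}\jac(X)^\vee$ and $\theta_Y\colon\jac(Y)\xrightarrow{\sim}\jac(Y)^\vee$ denote the autodualities furnished by the theta divisors (these are the isomorphisms written $\isom$ in the statement), $f^*\colon\Pic^0(Y)\to\Pic^0(X)$ pullback of line bundles, and $f_*\colon\Pic^0(X)\to\Pic^0(Y)$ the norm map on divisor classes, i.e.\ Albanese functoriality. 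The arrow labelled $f^*$ in the diagram is a map $\jac(Y)^\vee\to\jac(X)^\vee$, and the only $f$-associated map in that direction is the dual $(f_*)^\vee$ of $f_*$; so the composite to be computed is $f_*\circ\theta_X^{-1}\circ(f_*)^\vee\circ\theta_Y$.

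The first step is to show that under the autodualities this middle map becomes the honest pullback, i.e.\ that $\theta_X^{-1}\circ(f_*)^\vee\circ\theta_Y=f^*$. After possibly enlarging $k$ — the statement is geometric — I pick base points $x_0\in X(k)$ and $y_0=f(x_0)\in Y(k)$, obtaining Abel--Jacobi maps $\varphi_X\colon X\to\jac(X)$ and $\varphi_Y\colon Y\to\jac(Y)$ that satisfy $\varphi_Y\circ f=f_*\circ\varphi_X$. Applying the contravariant functor $\Pic^0$ to this equality of maps $X\to\jac(Y)$ gives
\[
 f^*\circ\varphi_Y^*=\varphi_X^*\circ(f_*)^\vee\colon\ \jac(Y)^\vee\longrightarrow\jac(X),
\]
where $\varphi_C^*\colon\jac(C)^\vee=\Pic^0(\jac C)\to\Pic^0(C)=\jac(C)$. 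Now $\varphi_C^*$ is, up to one universal sign independent of the curve $C$, the inverse of $\theta_C$ — this is the standard characterization of the canonical principal polarization of a Jacobian — so the two occurrences of the sign cancel and the displayed identity rearranges precisely to $\theta_X^{-1}\circ(f_*)^\vee\circ\theta_Y=f^*$. Consequently the composite of the lemma equals $f_*\circ f^*\colon\jac(Y)\to\jac(Y)$.

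The second step is the divisor computation $f_*\circ f^*=[m]$ on $\Pic^0(Y)$. For a closed point $P\in Y$ one has $f^*(P)=\sum_{Q\mapsto P}e_Q\,(Q)$ with $e_Q$ the ramification index, hence $f_*f^*(P)=\bigl(\sum_{Q\mapsto P}e_Q\bigr)(P)=(\deg f)\,(P)=m\,(P)$; since such $P$ generate $\Pic(Y)$ and both maps are group homomorphisms, $f_*\circ f^*$ is multiplication by $m$ on $\Pic(Y)$, in particular on $\jac(Y)$. Combining the two steps proves the lemma.

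I expect the only delicate point to be the first step — the adjunction $\theta_X^{-1}\circ(f_*)^\vee\circ\theta_Y=f^*$ — since it hinges on correctly matching $\theta_C$ with $(\varphi_C^*)^{-1}$ up to the universal sign; the functoriality $\varphi_Y\circ f=f_*\circ\varphi_X$ and the ramification count are routine. If one prefers to avoid the sign bookkeeping, one can instead simply quote the standard fact that $f_*$ and $f^*$ are mutually dual with respect to the canonical polarizations (e.g.\ from a standard reference on Jacobian varieties) and then carry out only the divisor computation.
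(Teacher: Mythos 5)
Your proposal is correct, and its second half is exactly the paper's argument: the paper simply ``unravels the definitions'' and computes $f_*f^*\bigl((z_1)-(z_2)\bigr)=m\bigl((z_1)-(z_2)\bigr)$ on divisor classes, which is your ramification/degree count. The difference is that you make explicit a step the paper leaves tacit: identifying the middle arrow of the diagram, which a priori is $(f_*)^\vee$ conjugated by the theta autodualities, with the honest pullback $f^*$ on $\Pic^0$. You do this via the Abel--Jacobi functoriality $\varphi_Y\circ f=f_*\circ\varphi_X$ together with the standard fact that $\varphi_C^*$ inverts the canonical polarization up to a universal sign, and the signs cancel as you say; equivalently one can just cite that $f^*$ and $f_*$ are dual with respect to the canonical principal polarizations. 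The paper's proof implicitly assumes this compatibility when it treats the composite as $f_*\circ f^*$ on divisors, so your write-up is the more complete one; the only cosmetic caveat is that your step $f_*f^*(P)=\bigl(\sum_{Q\mapsto P}e_Q\bigr)(P)$ silently takes residue degrees to be $1$, which is harmless since you have already allowed yourself to pass to $\bar k$ (the paper's ``multiplicities'' remark plays the same role).
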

    \begin{proof}
        It suffices to verify the above lemma for the points
        $(z_1) - (z_2) \in \jac(Y),$
        since these points generate $\jac(Y)$. Unraveling the definitions
        we get
        \begin{eqnarray*}
            f_*(f^*( (z_1)-(z_2))) &=&  f_*\left( \sum_{f(y_1)=z_1} (y_1)- \sum_{f(y_2)=z_2} (y_2) \right) \\
            &=& \left( \sum_{y_1\in f^{-1}(z_1)} (z_1) -\sum_{y_2=f^{-1}(z_2)} (z_2) \right) \\
            &=& m( (z_1)-(z_2) )
        \end{eqnarray*}
        where the summations are understood to account for multiplicities.
    \end{proof}

    \begin{lemma}
        Let $w$ be an involution on $X_0(N)$.
        Assume that
        \[ \xymatrix{
            X_0(N) \ar[dd]_w \ar[rd]^\pi & \\ & A \\
            X_0(N) \ar[ru]^\pi & 
        } \]
        commutes.
        Then the modular exponent of $A$ is even.
        \label{lemCE}
    \end{lemma}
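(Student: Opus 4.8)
The plan is to exploit Lemma \ref{mult} together with the commutativity of the given diagram. Write $\pi = \phi \circ i$ where $i: X_0(N) \to J_0(N)$ is the Albanese embedding, and let $\psi: A^\vee \to A$ be the composition $\phi_* \circ \phi^*$ (under the self-duality of $J_0(N)$) whose kernel has exponent $\widetilde{n_A}$. The commutativity of the diagram says $\pi \circ w = \pi$, i.e. $\phi$ factors through the quotient of $J_0(N)$ by the image of $(1 - w_*)$ on divisors of degree zero — more precisely $\phi \circ w_* = \phi$ on $J_0(N)$, since $w_*$ agrees with the map induced by $w$ on the image of $i$ and that image generates $J_0(N)$. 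Dualizing, $w_*^\vee \circ \phi^\vee = \phi^\vee$.

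Next I would compute $\psi + \psi^\vee$, or rather analyze $\psi$ on the $\pm 1$-eigenparts of $w_*$. The key point: because $w$ is an involution, $X_0(N) \to X_0(N)/w$ is a degree $2$ map of curves, and Lemma \ref{mult} applied to this map shows that the composite $\jac(X_0(N)/w) \to J_0(N) \to \jac(X_0(N)/w)$ is multiplication by $2$. Now $\phi: J_0(N) \to A$ kills the antiinvariant part of $w_*$ (since $\phi \circ w_* = \phi$ forces $\phi$ to vanish on the image of $1 - w_*$), so $\phi$ factors through the invariant quotient, which up to isogeny is $\jac(X_0(N)/w)$ — equivalently $\phi$ factors as $J_0(N) \xrightarrow{N_w} \jac(X_0(N)/w) \to A$ where $N_w$ is induced by the quotient map. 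Dually $\phi^\vee$ factors through the pullback $\jac(X_0(N)/w)^\vee \hookrightarrow J_0(N)^\vee$. Splicing these factorizations into $\psi = \phi_* \circ \phi^*$ and using Lemma \ref{mult} on the degree-$2$ cover, the "middle" composite $\jac(X_0(N)/w)^\vee \to J_0(N) = J_0(N)^\vee \to \jac(X_0(N)/w)$ becomes multiplication by $2$ (after identifying via the canonical principal polarizations). Hence $\psi$ is $2$ times the analogous "modular" self-map $\psi'$ for the quotient curve; in particular $2 \mid \widetilde{n_A}$, so the modular exponent is even.

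The main obstacle will be making the factorization-through-the-quotient rigorous at the level of abelian varieties rather than just up to isogeny, and in particular tracking how the canonical polarizations and the self-duality identifications interact — one wants an honest statement like $\ker(\psi) \supseteq A[2]$ or at least $2 \mid \widetilde{n_A}$, not merely that $\psi$ is divisible by $2$ after tensoring with $\QQ$. I would handle this by working with the normalization: let $g: X_0(N) \to Y := X_0(N)/w$ be the quotient morphism (degree $2$), giving $g^*: \jac(Y) \to J_0(N)$ and $g_*: J_0(N) \to \jac(Y)$ with $g_* g^* = [2]$ by Lemma \ref{mult}. Commutativity of the diagram means $\pi$ descends to $\bar\pi: Y \to A$, hence on Jacobians $\phi = \bar\phi \circ g_*$ for a surjection $\bar\phi: \jac(Y) \to A$ (using that $g_*$ is surjective and $i_A \circ g = (\text{Albanese of }Y\text{ composed with }g_*)$). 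Then $\phi^\vee = g_*^\vee \circ \bar\phi^\vee = g^* \circ \bar\phi^\vee$ under the self-dualities, so
\[
\psi = \phi_* \circ \phi^\vee = (\bar\phi \circ g_*)_* \circ (g^* \circ \bar\phi^\vee) = \bar\phi \circ (g_* \circ g^*) \circ \bar\phi^\vee = \bar\phi \circ [2] \circ \bar\phi^\vee = [2] \circ (\bar\phi \circ \bar\phi^\vee) = [2] \circ \bar\psi,
\]
where $\bar\psi: A^\vee \to A$ is the corresponding map for $\bar\phi$. (Here one must check the compatibility of Picard/Albanese functoriality with the self-dual identifications, which is where care is needed but no genuine difficulty arises.) Since $\ker[2] \subseteq \ker \psi$, the exponent $\widetilde{n_A}$ is divisible by the exponent of $A[2]$, which is $2$ (as $\dim A \geq 1$); therefore $\widetilde{n_A}$ is even, as claimed. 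A cleaner alternative, if one prefers to avoid normalization subtleties, is to argue directly that $A[2] \subseteq \ker(\psi)$ by noting $\psi = \phi \circ \phi^\vee$ is, via the factorization above, visibly killed on $2$-torsion; I would present whichever version reads most smoothly.
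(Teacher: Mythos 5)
Your proof is correct and follows essentially the same route as the paper: factor $\phi$ through $\jac(X_0(N)/w)$, dualize using the self-duality of $J_0(N)$ and the canonical polarizations, and apply Lemma \ref{mult} to the degree-$2$ quotient map to see that $\psi$ kills the $2$-torsion of $A^\vee$, so $\widetilde{n_A}$ is even. The only cosmetic slip is writing $A[2]$ where $A^\vee[2]$ is meant in the final alternative; the main computation $\psi=[2]\circ\bar\psi$ handles this correctly.
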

    \begin{proof}
        The above assumptions imply that $\pi$ factors through
        \[ \xymatrix{ X_0(N) \ar[r] & X_0(N)/w \ar[r] & A }. \]
        Therefore $\phi$ factors through
        \[ \xymatrix{ \jac(X_0(N)) \ar[r] & \jac(X_0(N)/w) \ar[r] & A }. \]
        Dualizing the above diagram and using the autoduality of $J_0(N)$, we get
        \[ \xymatrix{
        A^\vee \ar[r] \ar@{.>}[d]_\delta & \jac(X_0(N)/w)^\vee  \ar[r] \ar@{.>}[d] & J_0(N)^\vee \ar[d] \\ 
        A & \jac(X_0(N)/w) \ar[l] &  J_0(N) \ar[l] .
        } \]
        By lemma \ref{mult}, the middle arrow
        is multiplication by $2$, since the degree of $X_0(N) \rightarrow X_0(N)/w$
        is $2$. Using the commutativity of the above diagram, we can see that 
        $A^\vee[2] \subset \ker(\delta)$.
        Recalling that the modular exponent is the exponent of the 
        kernel of $\delta$, we conclude that the modular exponent
        of $A$ is even.
    \end{proof}
    Recall that for an involution map $w:X_0(N) \rightarrow X_0(N)$, we get
    the induced map $w_* : J_0(N) \rightarrow J_0(N)$.
    Let $A$ be an optimal modular Abelian variety, and
    $\phi:J_0(N) \rightarrow A$ the associated surjective map.
    Then if $w_*$ keeps $\ker(\phi)$ invariant, then $w_*$ acts
    on $A$ as well (this happens when, for example, $w$ is an
    Atkin-Lehner involution and $A$ is new). 
    The following lemma deals with the case when $w_*$ is trivial on $A$.
    \begin{lemma}
    \label{twotorsionlemma}
        Let $k$ be either $\QQ$ or $\FF_p$ with $p \ndiv N$.
        Let $A$ be an optimal modular Abelian variety with an 
        odd modular exponent. 
        As before let $\pi:X_0(N) \rightarrow A$ be the composition
        of Albanese embedding $X_0(N) \rightarrow J_0(N)$ and $\phi$.
        Assume that for some involution $w$,
        $w_*:J_0(N)\rightarrow J_0(N)$ descends down to a trivial action
        on $A$.
        Then $\pi(w(z))-\pi(z)$ is a nontrivial $k$-rational
        $2$-torsion point for all $z\in X_0(N)(\overline{k}).$ 
    \end{lemma}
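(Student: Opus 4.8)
The plan is to prove the stronger statement that $\pi(w(z))-\pi(z)$ is independent of $z$, namely always equal to the fixed point $P := \phi\big((w(\infty))-(\infty)\big) = \pi(w(\infty)) \in A(\overline{k})$, and then to check separately that $P$ is $k$-rational, is killed by $2$, and is nonzero. First I would unwind the definitions: since $\pi = \phi\circ i$ with $i(z)=(z)-(\infty)$, we have $\pi(w(z))-\pi(z) = \phi\big((w(z))-(z)\big)$. Using $w_*\,i(z) = (w(z))-(w(\infty))$, one has the divisor-class identity
\[ (w(z))-(z) \;=\; w_*\,i(z)\;-\;i(z)\;+\;i(w(\infty)) \]
in $J_0(N)$. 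Applying $\phi$ and invoking the hypothesis that $w_*$ descends to the \emph{trivial} action on $A$, i.e.\ $\phi\circ w_* = \phi$, the terms $\phi(w_*\,i(z))$ and $\phi(i(z))$ cancel, leaving $\pi(w(z))-\pi(z) = \phi(i(w(\infty))) = P$. This step --- in which the fact that the involution $w$ moves the Albanese base point $\infty$ to $w(\infty)$ is exactly what produces the constant $P$ --- is the only place where care is needed; the rest is formal.

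For $k$-rationality, note that $\phi$ is defined over $\ZZ[1/N]$, hence over $k$, so the map $z\mapsto \pi(w(z))-\pi(z)$ is $\Gal(\overline{k}/k)$-equivariant; being constant, its value $P$ is therefore $k$-rational (equivalently, when $w=w_r$ is an Atkin-Lehner involution, both $\infty$ and $w_r(\infty)$ are rational cusps, the latter being the unique cusp of denominator $N/r$ since $\gcd(r,N/r)=1$). For the $2$-torsion, apply $w_*$ to $i(w(\infty)) = (w(\infty))-(\infty)$: since $w^2=\id$ we get $w_*\big((w(\infty))-(\infty)\big) = (\infty)-(w(\infty)) = -\,i(w(\infty))$, and applying $\phi$ together with $\phi\circ w_* = \phi$ yields $P=-P$, so $2P=0$.

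Finally, for nontriviality I would argue by contradiction. If $P=0$ then $\pi(w(z))=\pi(z)$ for all $z$, hence $\pi = \pi\circ w$ as morphisms $X_0(N)\to A$, so the triangle of Lemma~\ref{lemCE} commutes; that lemma then forces the modular exponent of $A$ to be even, contradicting the hypothesis that it is odd. Therefore $P\neq 0$, which completes the proof. (As a sanity check, this argument also shows that under these hypotheses $w$ has no $\overline{k}$-rational fixed point, consistent with the remark on $w_2$ above.) The main obstacle is genuinely just the first bookkeeping step with $w_*$ and the Albanese map; everything downstream is a short formal chain using $\phi\circ w_* = \phi$, the involution property of $w$, and the autoduality of $J_0(N)$ already exploited in Lemma~\ref{lemCE}.
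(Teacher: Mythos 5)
Your proposal is correct and follows essentially the same route as the paper: both reduce $\pi(w(z))-\pi(z)$ to the constant $\pi(w(P_1))$ using the triviality of the descended action of $w_*$, deduce $2$-torsion from $w^2=\id$ (your direct computation $w_*i(w(\infty))=-i(w(\infty))$ is just a repackaging of the paper's substitution of $w(z)$ into the functional equation), and get nontriviality by contradiction via Lemma~\ref{lemCE} and rationality from the rationality of the cusp $w(\infty)$. No gaps to report.
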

    \begin{proof}
        Recall that $P_1$ is the cusp at infinity and $\pi(z)=\phi(z-P_1)$.
        Then we get
        \begin{eqnarray*}
            \pi(w(z))-\pi(z)&=&\phi(w(z)-P_1)-\phi(z-P_1) \\
            &=& \phi(w(z)-w(P_1))-\phi(z-P_1)+\phi(w(P_1)-P_1) \\
            &=& w_*(\phi(z-P_1))-\phi(z-P_1)+\phi(w(P_1)-P_1) \\
            &=& \pi(w(P_1)).
        \end{eqnarray*}
        Therefore $\pi(w(z))=\pi(z)+\pi(w(P_1))$ for all $z \in X_0(N)$.
        Applying this equation to $w(z)$ we get
        $\pi(w(w(z)))=\pi(w(z))+\pi(w(P_1))=\pi(z)+2\pi(w(P_1)).$
        Therefore, $2\pi(w(P_1))=0$. 
        By lemma \ref{lemCE}, 
        if $A$ has an odd modular exponent, then
        $\pi(w(z))-\pi(z)$ is nontrivial. 
        Thus, $\pi(w(P_1))$ is a nontrivial $2$-torsion point of $A$.
        It is $k$ rational because $w(P_1)$ is also $k$ rational.
    \end{proof}

    Given the above lemma, we can now prove theorem \ref{thmCE1}.
    \begin{proof}
        Let $W$ be the group of Atkin-Lehner involutions on $X_0(N)$,
        and let $k=\QQ$ or $\FF_2$ when $N$ is odd.
        Since we are assuming that $A$ is new and simple, for any Atkin-Lehner
        involution $w \in W$, we have $w_*(z)=\pm z$ for all 
        $z \in A(\overline{k})$. 
        This gives us a map $W \rightarrow \{ \pm 1 \}.$ Let $W_0$ be the
        kernel of this map.
        Note that $W_0$ has index at most $2$ in $W$.
        Assume that $N$ is not a power of a prime, hence $W$ will have more
        than $2$ elements.
        Therefore, we can find a 
        non-trivial element $w \in W_0$, that is $w_*(z)=z$ for all 
        $z \in A(\overline{k})$. Applying lemma
        \ref{twotorsionlemma}, we find that 
        $0 \neq \pi(w(P_1)) \in A[2](k).$
        Therefore, if $A[2](k)=0$ then $N$ must be a power of a prime.
    \end{proof}
    Lemma \ref{twotorsionlemma} can also be used to find the signs of the
    Atkin-Lehner involutions on $A$ in certain cases.
    \begin{lemma}
        Let $A$ be a new simple modular simple Abelian variety with \
        conductor $N$ and an odd modular exponent. If
        the Atkin-Lehner involution $w_r:X_0(N) \rightarrow X_0(N)$ has a fixed
        point then $(w_r)_*$ acts as $-1$ on $A$. 
        Specifically, $(w_N)_*$ acts as $-1$ on $A$.
        When $N=2M$ (resp. $N=4M$), $(w_2)_*$ acts as $1$ 
        (resp. $(w_2)_*$ acts as $-1$) on $A$.
        \label{AtkinLehnerSign}
    \end{lemma}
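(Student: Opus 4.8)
\emph{Proof sketch.}
The plan is to play off Lemma~\ref{twotorsionlemma} against the existence of fixed points of Atkin--Lehner involutions. Since $A$ is new and simple, each $w_r$ with $r\mid N$, $\gcd(r,N/r)=1$, acts on $A$ as a sign $\epsilon_r\in\{\pm1\}$ (newness is used so that $(w_r)_*$ preserves $\ker(\phi)$ and hence descends to $A$), and because the $w_r$ multiply, $r\mapsto\epsilon_r$ is a character of the Atkin--Lehner group. The heart of the matter is the first assertion of the lemma: \emph{if $w_r$ has a fixed point on $X_0(N)$ then $\epsilon_r=-1$}. Indeed, if $\epsilon_r=+1$ then $(w_r)_*$ descends to the trivial action on $A$, so Lemma~\ref{twotorsionlemma} (taken with $k=\QQ$) makes $\pi(w_r(z))-\pi(z)$ a \emph{nontrivial} $2$-torsion point of $A$ for every $z\in X_0(N)(\QQbar)$. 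But the fixed locus of $w_r$ is a $\QQ$-subscheme of $X_0(N)$, so if it is nonempty it contains a point $z_0\in X_0(N)(\QQbar)$, and then $\pi(w_r(z_0))-\pi(z_0)=0$, a contradiction. Hence $\epsilon_r=-1$.

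Everything else follows by exhibiting the relevant fixed points. The Fricke involution $w_N\colon\tau\mapsto-1/(N\tau)$ fixes the CM point $\tau_0=i/\sqrt{N}\in X_0(N)(\QQbar)$, so $\epsilon_N=-1$. When $N=4M$ with $M$ odd, the remark following Proposition~\ref{prop:HeckeAction} shows that $w_2$ fixes the unique cusp of $X_0(4M)$ of denominator $2$, so $\epsilon_2=-1$ in that case.

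For $N=2M$ with $M$ odd (necessarily $M>1$), I would instead show that $w_M$ has a fixed point on $X_0(2M)$; then $\epsilon_2=\epsilon_N\epsilon_M=(-1)(-1)=+1$, as claimed. To produce a fixed point of $w_M$: the degeneracy map $\alpha\colon X_0(2M)\to X_0(M)$ remembering only the level-$M$ structure has degree $3$, is unramified over the point of $X_0(M)$ determined by $\tau_0=i/\sqrt{M}$ (there the underlying elliptic curve has automorphism group $\{\pm1\}$), and satisfies $\alpha\circ w_M=w_M\circ\alpha$, where on the right $w_M$ is the Fricke involution of $X_0(M)$. Since $\tau_0$ is fixed by that Fricke involution, $w_M$ permutes the three points of the fibre $\alpha^{-1}(\tau_0)$ through an involution, which must fix one of them; this gives the required fixed point of $w_M$ in $X_0(2M)(\QQbar)$. (Alternatively: take an elliptic curve with CM by an order of discriminant $-M$ or $-4M$, equipped with its cyclic $M$-subgroup $E[\sqrt{-M}]$ and a cyclic $2$-subgroup stabilized by $\sqrt{-M}$, and check directly that it is $w_M$-fixed.)

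The step I expect to cost the most work is this last construction — verifying the equivariance $\alpha\circ w_M=w_M\circ\alpha$ and that $\alpha$ is genuinely unramified over $\tau_0$ (so the fibre really has three points), or, in the CM alternative, that quotienting the CM curve by its cyclic $M$-subgroup returns the same curve compatibly with the induced cyclic $2M$-structure. The rest is formal once the principle that a fixed point forces $\epsilon_r=-1$ is in place, the only other point requiring attention being the multiplicativity of $r\mapsto\epsilon_r$, which again rests on $A$ being new.
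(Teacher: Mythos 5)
Your proposal is correct, and its engine is exactly the paper's: the first claim is proved by observing that if $(w_r)_*=+1$ then Lemma \ref{twotorsionlemma} (whose nontriviality clause rests on Lemma \ref{lemCE} and the odd modular exponent) makes $\pi(w_r(z))-\pi(z)$ a fixed nontrivial $2$-torsion point for every $z\in X_0(N)(\QQbar)$, while evaluating at a fixed point of $w_r$ gives $0$; the cases $w_N$ (the CM point $i/\sqrt{N}$) and $N=4M$ (the cusp $P_2$, via the remark after Proposition \ref{prop:HeckeAction}) are also handled exactly as in the paper. The only genuine divergence is the case $N=2M$: the paper simply exhibits an explicit fixed point of $w_M$ on $X_0(2M)$, namely $\frac{1}{M-i\sqrt{M}}$, and then (as you do) uses multiplicativity of the Atkin--Lehner signs on the new quotient to get $(w_2)_*=+1$; you instead deduce the existence of a fixed point of $w_M$ abstractly, from the fact that $w_M$ at level $2M$ commutes with the degree-$3$ degeneracy map $X_0(2M)\to X_0(M)$ and induces an involution of the $3$-point fibre over the $w_M$-fixed CM point $i/\sqrt{M}$ (unramified there since that point is neither a cusp nor an elliptic point for odd $M>1$). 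Your parity argument is sound and has the merit of not requiring any matrix computation, but note that the equivariance and unramifiedness checks you flag as the costly step are precisely what the paper's one-line explicit fixed point avoids; either route completes the proof.
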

    \begin{proof}
        Let $P \in X_0(N)(\overline{\QQ})$ be the fixed point of $w_r$. 
        Then $\pi(P)=\pi(w_r(P))$,
        which implies that $\pi(w_r(P))-\pi(P)=0$. However, we know that if
        $(w_r)_*=1$ then $\pi(w_r(z))-\pi(z)=\pi(w_r(P_1))$ for any 
        $z \in X_0(N)(\overline{\QQ})$.
        Specifically, we get $\pi(w_r(z))=\pi(z)$, which by lemma \ref{lemCE} implies
        that $A$ has an even congruence number. Therefore $(w_r)_*=-1$ when $w_r$ has a
        fixed point in $X_0(N)$.
        
        Finally, the point $\sqrt{-N}$ is fixed by $w_N$.
        When $N=2M$, we can check that ${1\over M-i\sqrt{M}}$ is fixed under
        $(w_M)_*$. Similarly, when $N=4M$, $P_2$ is fixed under $(w_2)_*$.
        Therefore, we have the desired result.
    \end{proof}
    Since $(w_N)_*$ is the sign of the functional equation, we get the following
    \begin{cor}
        \label{evenrank}
        If $A$ is a simple modular Abelian variety with an odd congruence number, then
        the analytic rank of $A$ is even.
    \end{cor}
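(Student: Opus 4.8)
The plan is to chain the two results just established with the standard functional equation of a modular $L$-function. First I would replace the hypothesis ``odd congruence number'' by ``odd modular exponent'': a finite abelian group has odd order exactly when it has odd exponent, so $A$ has an odd congruence number if and only if its congruence exponent $\widetilde{r_A}$ is odd, and then Theorem~\ref{moddegcong}(1), which gives $\widetilde{n_A}\mid\widetilde{r_A}$, forces the modular exponent $\widetilde{n_A}$ to be odd as well. (I am using here that $A$ is the newform quotient $A_f$ with $f$ of level $N$ equal to the conductor, so that both Theorem~\ref{moddegcong} and Lemma~\ref{AtkinLehnerSign} apply.) Hence $A$ is a new simple modular Abelian variety of odd modular exponent, and Lemma~\ref{AtkinLehnerSign} applies verbatim: $(w_N)_*$ acts as $-1$ on $A$.

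Next I would convert the sign of $(w_N)_*$ into the root number of $L(A,s)$. Write $L(A,s)=\prod_\sigma L(f^\sigma,s)$ over the Galois conjugates of $f$; each factor is the $L$-function of a weight-two newform of level $N$, whose completed $L$-function satisfies $\Lambda(f^\sigma,s)=\varepsilon(f^\sigma)\,\Lambda(f^\sigma,2-s)$ with global root number $\varepsilon(f^\sigma)=-\lambda_N(f^\sigma)$ (the $-1$ being the weight-two archimedean sign $i^2$), where $\lambda_N(f^\sigma)\in\{\pm1\}$ is the pseudo-eigenvalue of the Fricke involution $w_N$ on $f^\sigma$, which is precisely the eigenvalue by which $(w_N)_*$ acts on $A$. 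Since $\lambda_N(f^\sigma)$ is rational it equals $\lambda_N(f)$, so by the first step $\lambda_N(f^\sigma)=-1$ and $\varepsilon(f^\sigma)=+1$ for every $\sigma$. Therefore $\ord_{s=1}L(f^\sigma,s)$ is even for each $\sigma$, and the analytic rank of $A$, equal to $\sum_\sigma\ord_{s=1}L(f^\sigma,s)$, is a sum of even integers, hence even.

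The one point that needs care is the bookkeeping of signs: what is needed is that the root number of each \emph{individual} factor $L(f^\sigma,s)$ equals $+1$ --- not merely that the product root number $\prod_\sigma\varepsilon(f^\sigma)=(-\lambda_N(f))^{\dim A}$ does --- so it genuinely matters that Lemma~\ref{AtkinLehnerSign} yields $(w_N)_*=-1$ rather than $+1$; had it been $+1$ one would get only parity information depending on $\dim A$. Everything else is the standard dictionary between the Atkin--Lehner eigenvalue at the level and the sign of the functional equation for weight-two newforms, together with the factorization $L(A,s)=\prod_\sigma L(f^\sigma,s)$, so no further input is required.
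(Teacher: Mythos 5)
Your proposal is correct and follows essentially the same route as the paper, which deduces the corollary from Lemma \ref{AtkinLehnerSign} together with the (unstated in detail) fact that the eigenvalue of $(w_N)_*$ determines the sign of the functional equation. You merely make explicit two steps the paper leaves implicit — passing from odd congruence number to odd modular exponent via Theorem \ref{moddegcong}(1), and checking the root number $+1$ for each Galois conjugate $f^\sigma$ separately so that every factor of $L(A,s)$ vanishes to even order at $s=1$ — both of which are the intended justification.
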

    \begin{remark}
        Calegari and Emerton used theorem \ref{thmCE1} for modular elliptic curves
        $E$ with odd modular degree and conductor $N$ to show that $N$ has at 
        most two odd prime divisors.
        Specifically, since
        $E[2](\QQ)$ has at most $4$ elements, an immediate corollary of theorem \ref{thmCE1}
        is that if $N$ has more than $3$ prime divisors, then $E$ has even modular
        degree. Similarly, if $E$ has good reduction at $2$, then since 
        $E[2](\FF_2)$ has at most two elements, they conclude
        that if $N$ has more than $2$ prime divisors then $E$ has even
        modular degree.
    \end{remark}

\subsection{Non-Semistable Case}
    \label{CMCase}
    The goal of this subsection is to prove the following
    \begin{thm}
        \label{InnerTwist}
        Let $A$ be an absolutely simple modular Abelian variety $A$ of level $N$ with
        an odd congruence number. Let $\delta_p=0$ for the odd primes $p$ and 
        $\delta_2=2$.
        Assume that $p^{2+\delta_p} | N$. Then $A$ has good reduction 
        away from $p$ and $2$, and has potentially good reduction everywhere. 
        Specifically, if $p$ is odd and $p^2|N$, 
        then $N=p^s$, $N=4p^s$, or
        $N=8p^s$ for $s \geq 2$, and if $16|N$ then $N=2^s$.
    \end{thm}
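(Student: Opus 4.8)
The plan is to set $A = A_f$ for the newform $f\in S(N)$ attached to $A$, and to prove in turn: that $A$ has potentially good reduction at every prime of additive reduction; that $f$ has complex multiplication or an inner twist; and that, together with the cuspidal-subgroup computations, these force $N$ into the list $\{p^s,4p^s,8p^s,2^s\}$ with the stated constraints. Throughout I use that $p^{2+\delta_p}\mid N$ forces $T_p=0$ on $A$, equivalently $a_p(f)=0$ (the remark at the end of Section~\ref{sec:HeckeAction}). I would dispose of the prime $p=2$ (hypothesis $16\mid N$) first, since the case $p$ odd invokes its conclusion to rule out $16\mid N$.

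For the first point, a congruence trick. Suppose $A$ has additive reduction at a prime $\ell$ (so $v_\ell(N)\geq 2$) but not potentially good reduction there. Since the nebentypus is trivial, the local representation of $f$ at $\ell$ must then be $\mathrm{St}\otimes\mu$ for a \emph{quadratic} ramified character $\mu$, so $g=f\otimes\mu$ is a newform of strictly smaller level, Steinberg at $\ell$. Because $\mu$ is quadratic we have $\mu(q)\equiv 1\pmod 2$ for every $q$, so $f$ is congruent modulo $2$ to the oldform $g-a_\ell(g)\,g|V_\ell$ of $S(N)$: these have equal $U_\ell$-eigenvalue $0$ and $T_q$-eigenvalues $\mu(q)a_q(g)\equiv a_q(g)$ for $q\neq\ell$. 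This is a nontrivial mod-$2$ congruence between $A$ and its complement in $J_0(N)$, contradicting the oddness of the congruence number. Hence $A$ has potentially good reduction at every additive prime; so, once we show below that every prime dividing $N$ is additive, potentially good reduction everywhere follows.

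The main new input — and the theme of this section — is that $f$ has CM or an inner twist. I would obtain this by comparing $f$ with $f\otimes\chi$ for $\chi$ the quadratic character ramified only at $p$: since $a_p(f)=0$, the twist $f\otimes\chi$ is a newform of level dividing $N$, with $v_q$ unchanged for $q\neq p$; if it is Galois-conjugate to $f$ then $\chi$ is an inner twist of $A$, and otherwise one pushes the oddness of the congruence number — via the explicit new cuspidal $2$-torsion of Proposition~\ref{newness}, the Atkin-Lehner signs of Lemma~\ref{AtkinLehnerSign}, and the rational $2$-torsion point furnished by Theorem~\ref{thmCE1} (through Theorem~\ref{moddegcong}) when $N$ is not a prime power — to force $f$ to have CM. Granting this, the conductor is pinned down as follows. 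If $A$ has CM by an imaginary quadratic field $K$, then $A$ has potentially good reduction everywhere (CM abelian varieties do), and triviality of the nebentypus confines the conductor of the associated Hecke character to primes ramified in $K$; since $p\mid N$ this forces $K=\QQ(\sqrt{-p})$, whence $N$ is a power of $p$, or has $2$-part dividing $8$ when $p\equiv1\pmod4$. The inner-twist case is handled in the same spirit, using that $A$ and its quadratic twist have equal conductor together with the congruence trick above. Finally, the power of $2$: when $16\mid N$, the case $p=2$ (already proved) gives $N=2^s$; when $p$ is odd and $p^2\mid N$, the cuspidal-order computations of Section~\ref{Cuspidal} for $N$ of the shapes $4\prod p_i$ and $8\prod p_i$ — using that condition (3) of Proposition~\ref{ligozat2} reduces to integrality since $N=2^{s_2}M$ with $M$ odd and $s_2<4$ — combined with Lemma~\ref{AtkinLehnerSign} exclude $v_2(N)=1$, while $v_2(N)\geq4$ is excluded by the case $p=2$; this leaves $N=p^s$, $4p^s$, or $8p^s$ with $s\geq 2$. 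The principal obstacle is completing the CM-or-inner-twist dichotomy; the remaining steps are local-conductor bookkeeping and the cuspidal-subgroup data already assembled in Section~\ref{Cuspidal}.
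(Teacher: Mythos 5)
There are genuine gaps, and they sit exactly at the steps you defer. First, the CM-or-inner-twist dichotomy is both garbled and (as you admit) incomplete, yet it is the heart of the argument. The paper's use of the odd congruence number here is very simple and is the opposite of what you propose: since $\chi$ is quadratic, $\chi\otimes f_A\equiv f_A\pmod\lambda$ for every $\lambda\mid 2$, so if $\chi\otimes f_A$ were \emph{not} in the Galois orbit of $f_A$, one would get a maximal ideal $\mm$ of residue characteristic $2$ with $A[\mm]\neq 0$ and $B[\mm]\neq 0$ (the framework of \ref{ssec:congnumber}), forcing an even congruence number --- a contradiction. Hence $\chi\otimes f_A=\gamma(f_A)$ for some $\gamma$, and the dichotomy is: $\gamma=\mathrm{id}$ gives CM, $\gamma\neq\mathrm{id}$ gives an inner twist. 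Your version puts the oddness in the wrong branch (``otherwise \dots force $f$ to have CM'' is not a valid conclusion; CM is the sub-case $\chi\otimes f=f$ of the conjugate case), and the machinery you invoke there (Proposition \ref{newness}, Lemma \ref{AtkinLehnerSign}, Theorem \ref{thmCE1}) is not what closes it. Second, even granting the dichotomy, you have no mechanism to rule out a semistable prime $q\| N$ with $q\neq p$ (including $2\| N$), which is precisely what restricts $N$ to $p^s$, $4p^s$, $8p^s$, $2^s$. Your congruence trick with $\mathrm{St}\otimes\mu$ only concerns additive primes that are not potentially good; it says nothing about multiplicative primes. The paper handles this by a structural result: in the inner-twist case $D=\End_{\QQbar}(A)\otimes\QQ$ is a quaternion division algebra over $F=E^{\Gamma}$ (Lemma \ref{TateLemma} plus absolute simplicity rule out the matrix-algebra alternative), and then Theorem 3 of \cite{RibetEndo} gives potentially good reduction everywhere; ``handled in the same spirit'' is not a substitute for this input.

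Third, you never exclude a second prime $q$ with $q^{2+\delta_q}\mid N$, which is needed both for the $16\mid N\Rightarrow N=2^s$ case (which you say you will do ``first'' but never actually argue) and to forbid $16\mid N$ or a second odd square alongside $p^2\mid N$. The paper does this by counting inner twists: two such primes give $\Gamma\supseteq\{\gamma_1,\gamma_p,\gamma_q,\gamma_{pq}\}$, so $[E:F]\geq 4$ and $D$ must be a matrix algebra, contradicting absolute simplicity via Lemma \ref{TateLemma}; in the CM case $A$ is forced to be an elliptic curve, which cannot have CM by both $\chi_p$ and $\chi_q$. Your proposed substitute --- the cuspidal-order computations for levels $4\prod p_i$ and $8\prod p_i$ --- does not apply, since those calculations require the odd part of $N$ to be square free, whereas here $p^2\mid N$; and the exclusion of $v_2(N)=1$ comes from potentially good reduction everywhere, not from the cuspidal subgroup. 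On the positive side, your level-lowering congruence at an additive, non-potentially-good prime is a sensible observation and would recover part of what Ribet's theorem delivers, but it does not touch the three missing pieces above, which are where the actual proof lives.
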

    We expect this theorem to be true without assuming $A$ to be absolutely
    simple; however, at this moment we do not know
    how to overcome the difficulty with the inner forms in that case. 
    To prove this theorem, we use the technique of Calegari and Emerton to
    show that such modular Abelian varieties have inner twists or complex
    multiplication
    by a character of conductor $p$ (see \cite{CE}). 
    Using the results of Ribet on inner
    twists \cite{RibetTwists}, we will prove that $A$ must have potentially good reduction
    everywhere if $A$ is absolutely simple, and that $A$ has good reduction away from
    $p$, and possibly $2$. 
    We have the following lemma.
    \begin{lemma}
        If $\End_{\QQbar}(A)\otimes \QQ$ is a matrix algebra, then
        $A$ is not absolutely simple.
        \label{TateLemma}
    \end{lemma}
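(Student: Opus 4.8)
The plan is to reduce the statement to the standard dictionary between simplicity of an abelian variety and the structure of its endomorphism algebra, all of which takes place over $\QQbar$. Since $A$ is defined over $\QQ$, the phrase ``$A$ absolutely simple'' means that the base change $A_{\QQbar}$ is simple as an abelian variety over the algebraically closed field $\QQbar$, and $\End_{\QQbar}(A)\otimes\QQ$ denotes $\End(A_{\QQbar})\otimes\QQ$. We read the hypothesis ``matrix algebra'' as $\End_{\QQbar}(A)\otimes\QQ\cong M_n(D)$ for a division algebra $D$ over $\QQ$ and an integer $n\geq 2$, i.e. a genuine matrix algebra as opposed to merely a division algebra.

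First I would invoke Poincar\'e's complete reducibility theorem over $\QQbar$: $A_{\QQbar}$ is isogenous over $\QQbar$ to a product $\prod_{i=1}^m B_i^{e_i}$ with the $B_i$ simple and pairwise non-isogenous. Passing to endomorphism algebras and using Schur's lemma (the endomorphism algebra of a simple abelian variety is a division algebra, and there are no nonzero homomorphisms between non-isogenous simple abelian varieties), one obtains $\End_{\QQbar}(A)\otimes\QQ\cong\prod_{i=1}^m M_{e_i}(D_i)$ with each $D_i=\End_{\QQbar}(B_i)\otimes\QQ$ a finite-dimensional division algebra over $\QQ$. In particular $A_{\QQbar}$ is simple exactly when $m=1$ and $e_1=1$, that is, exactly when $\End_{\QQbar}(A)\otimes\QQ$ is a division algebra.

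It then remains to observe that a matrix algebra $M_n(D)$ with $n\geq 2$ is never a division algebra: it contains zero divisors, for instance the elementary matrices $E_{11}$ and $E_{22}$ with $E_{11}E_{22}=0$. Hence under our hypothesis $\End_{\QQbar}(A)\otimes\QQ$ is not a division algebra, and by the previous paragraph $A_{\QQbar}$ fails to be simple, so $A$ is not absolutely simple. Equivalently, matching $M_n(D)$ against $\prod_{i} M_{e_i}(D_i)$ and using the uniqueness part of Artin--Wedderburn forces $m=1$, $D_1\cong D$, and $e_1=n\geq 2$, so that $A_{\QQbar}\sim B_1^{\,n}$ is a visibly non-trivial product.

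I do not expect a genuine obstacle here; the only points requiring care are bookkeeping about fields of definition (making sure ``absolutely simple'' is interpreted as simplicity of $A_{\QQbar}$ and that $\End_{\QQbar}$ means geometric endomorphisms) and explicitly recording the identification of the endomorphism algebra of an isogeny product with the corresponding product of matrix algebras over division rings. Both are entirely standard (see, e.g., Mumford's book on abelian varieties), so the write-up will be brief.
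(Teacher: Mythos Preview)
Your argument is correct. You invoke Poincar\'e complete reducibility to write $A_{\QQbar}\sim\prod B_i^{e_i}$, compute $\End_{\QQbar}(A)\otimes\QQ\cong\prod M_{e_i}(D_i)$, and conclude that this algebra is a division algebra precisely when $A_{\QQbar}$ is simple; since a matrix algebra $M_n(D)$ with $n\geq 2$ has zero divisors, the hypothesis forces $A$ to fail absolute simplicity.

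The paper's proof reaches the same conclusion by a more bare-hands route that avoids Poincar\'e reducibility altogether. It observes that a matrix algebra contains nontrivial orthogonal idempotents $e_1,e_2$ with $e_1+e_2=\id$ and $e_1e_2=0$; clearing denominators, $ne_i\in\End_{\QQbar}(A)$ for some integer $n$. If $A$ were absolutely simple, each $ne_i$ would have image $0$ or $A$; but $(ne_1)(ne_2)=0$ forces one of them to be the zero map, contradicting $e_i\neq 0$. Your approach gives more: it actually identifies $A_{\QQbar}$ up to isogeny as $B^{\,n}$, which is pleasant structural information. The paper's approach, on the other hand, is shorter and needs only the single fact that an endomorphism of a simple abelian variety is either zero or an isogeny, so it is self-contained without citing the full structure theory.
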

    \begin{proof}
        Assume that $R=\End_{\QQbar}(A)\otimes \QQ$ is a matrix algebra.
        We can find the projections $e_1, e_2 \in R$ such that $e_1+e_2=\id$,
        $e_1 e_2=0$, and $e_1, e_2 \not \in \{ 0, \id\}.$
        For some integer $n$, 
        $ne_i \in \End_{\QQbar}(A)$. If we assume that $A$ is absolutely
        simple, the image of $ne_iA$ must be $A$ or $0$. However, since
        $(ne_1)(ne_2)=n^2e_1e_2=0$, one of them must be $0$. Assume 
        without loss of generality that
        $ne_2=0$ in $\End_{\QQbar}(A)$. This implies that $e_2=0$,
        which contradicts our assumption that $e_2 \not \in \{0, \id\}.$
        Therefore, $A$ is not absolutely simple.
    \end{proof}

    This lemma is used in conjunction with Ribet's result on the endomorphism algebra
    of modular Abelian varieties with inner twists. Specifically, let $A$ be a 
    $d$-dimensional simple modular
    Abelian variety. There are $d$ modular eigenforms of weight $2$
    and level $N$ associated with
    $A$, which are Galois conjugate to each other.
    Let $f=\sum a_n q^n$ be one such eigenform, and
    $E=\QQ(\dots,a_n,\dots)$ be the field of
    definition of $f$. We know that 
    $\End_{\QQ}(A) \otimes \QQ=E$. Let $D=\End_{\QQbar}(A)\otimes \QQ$ be the algebra
    of all of the endomorphisms of $A$. From \cite{RibetEndo} we know that
    $E$ is its own commutant in $D$,
    and therefore $D$ is a central simple algebra over some subfield 
    $F$ of $E$.
    If we assume that $A$ is absolutely simple, then $D$ must be some 
    division algebra with centre $E$.
    Furthermore, $D$ must be either $E$ (which forces $E=F$) or a 
    quaternion division algebra over $F$ (which forces $E$ to be a quadratic 
    extension of $F$).

    \begin{prop}
        Let $A$ be an absolutely simple modular Abelian variety $A$ of level $N$
        with an odd congruence number. Let $\delta_p=0$ for odd primes and $\delta_2=2$.
        If $p^{2+\delta_p} | N$  
        then $A$ has potentially good reduction everywhere, specifically, for 
        any other prime number $q$ if $q | N$ then $q^2 |N$.
    \end{prop}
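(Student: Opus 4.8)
The plan is to pin down the $\QQbar$-endomorphism algebra of $A$ and to read off the reduction behaviour from it. Let $f$ be a newform of level $N$ attached to $A$, and put $E=\QQ(\dots,a_n(f),\dots)=\End_{\QQ}(A)\otimes\QQ$, a totally real field of degree $d=\dim A$; since the congruence number of $A$ is odd, so is its congruence exponent, and hence so is the modular exponent of $A$ by Theorem \ref{moddegcong}, so that the results of Section \ref{sec22} are available. The hypothesis $p^{2+\delta_p}\mid N$ enters through the Calegari--Emerton argument, run on $X_0(N)$ as in the elliptic-curve case of \cite{CE}: from the Atkin--Lehner involution $w_{p^{\ord_p(N)}}$, the explicit action of $T_p$ on the rational cuspidal subgroup given in Proposition \ref{prop:HeckeAction}, and Lemma \ref{twotorsionlemma}, one obtains $f^{\sigma}=f\otimes\chi$ for some $\sigma\in\Gal(E/\QQ)$ and some nontrivial Dirichlet character $\chi$ of conductor a power of $p$. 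Thus $A$ has an inner twist, or complex multiplication when $\sigma=\id$, and in either case $D:=\End_{\QQbar}(A)\otimes\QQ$ strictly contains $E$.

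Given $D\supsetneq E$, I would distinguish two cases using the structure theory recalled above. If $\sigma=\id$, then $f$ has complex multiplication by an imaginary quadratic field $K$ and $A$ becomes $\QQbar$-isogenous to $E_0^{d}$ for a CM elliptic curve $E_0$ with $\End_{\QQbar}(E_0)\otimes\QQ=K$; if $d\geq 2$ then $D\cong M_{d}(K)$ is a matrix algebra, which contradicts absolute simplicity by Lemma \ref{TateLemma}, so $d=1$ and $A$ is itself a CM elliptic curve. Such a curve has potentially good reduction everywhere (its $j$-invariant is an algebraic integer), which is the claimed conclusion. If $\sigma\neq\id$, then, $A$ being absolutely simple, $D$ is a division algebra in which $E$ is its own commutant, so --- as $D\neq E$ --- $D$ is a quaternion division algebra over the fixed field $F$ of the inner twists, with $E/F$ quadratic; in particular $[F:\QQ]=d/2$ and $\dim_{\QQ}D=2d$.

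So it remains to treat the inner-twist case, where it is enough to show $A$ has no potentially multiplicative reduction at any prime $q$, which is equivalent to the desired implication $q\mid N\Rightarrow q^2\mid N$. Suppose $A$ did have potentially multiplicative reduction at some $q$; then $\pi_q(f)$ is special, and there is a number field $L$ over which all endomorphisms of $A$ are defined and over which $A$ acquires purely toric reduction at a place $\mathfrak q\mid q$. By Mumford--Raynaud uniformization $A_{L_{\mathfrak q}}$ is analytically $\mathbb{G}_m^{d}/\Lambda$ with $\Lambda\cong\ZZ^{d}$, and the action of endomorphisms on the character lattice of the torus gives an injection $\End(A_{L_{\mathfrak q}})\otimes\QQ\hookrightarrow M_{d}(\QQ)$; restricting, $D$ embeds in $M_{d}(\QQ)$, so $\QQ^{d}$ is a faithful $D$-module. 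But $D$ is a quaternion division algebra, hence its unique simple left module has $\QQ$-dimension $\dim_{\QQ}D=2d$, so every faithful $D$-module has $\QQ$-dimension divisible by $2d$; since $d$ is not divisible by $2d$, this is absurd. Therefore $A$ has potentially good reduction at every prime, which is the assertion.

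The endomorphism-algebra deductions and the uniformization step are essentially formal once Ribet's results \cite{RibetEndo} and \cite{RibetTwists} are available. I expect the genuine work to be the first step: extracting the inner-twist/CM statement from the oddness of the congruence number, that is, adapting the Calegari--Emerton analysis of $w_{p^{\ord_p(N)}}$ and of the new cuspidal $2$-torsion to higher-dimensional $A$. This is presumably also why the proposition is stated only for absolutely simple $A$ --- for a general simple $A$ the inner forms obstruct the classification of $D$, and the last step's contradiction no longer goes through.
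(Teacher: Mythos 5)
Your second half is fine, but the step that carries the whole proposition is asserted rather than proved, and the tools you point to would not prove it. You claim that ``from the Atkin--Lehner involution $w_{p^{\ord_p(N)}}$, Proposition \ref{prop:HeckeAction}, and Lemma \ref{twotorsionlemma} one obtains $f^{\sigma}=f\otimes\chi$'': those results concern the rational cuspidal $2$-torsion and the signs of Atkin--Lehner operators, and what they produce are rational $2$-torsion points of $A$, not identities among eigenforms; no amount of Atkin--Lehner bookkeeping yields an inner twist. The actual mechanism, both in \cite{CE} for elliptic curves and in the paper, is a twisting/congruence argument: because $p^{2+\delta_p}\mid N$, the twist $\chi\otimes f_A$ by the quadratic character $\chi$ of conductor $p$ (of $2$-power conductor when $p=2$) is again an eigenform in $S_2(\Gamma_0(N))$ (see \cite{Shim94}), and since $\chi$ takes only the values $\pm 1$ its Fourier coefficients are congruent to those of $f_A$ modulo every prime $\lambda\mid 2$. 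If $\chi\otimes f_A$ were not a Galois conjugate of $f_A$, this would be a mod $2$ congruence between $f_A$ and an eigenform contributing to $B=\ker\phi$, forcing $2$ to divide the congruence number --- contrary to hypothesis. Hence $\chi\otimes f_A=\gamma(f_A)$ for some $\gamma\in\Hom(E,\CC)$, which is the CM/inner-twist dichotomy you start from. Note that in your write-up the oddness of the congruence number is never actually used except through this unproven claim, so as it stands the proof has a genuine gap precisely at the point where the hypothesis enters.

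Granting that step, the rest of your argument is correct and partly different from the paper's. The CM case is handled the same way (Lemma \ref{TateLemma} rules out $\dim A>1$, and a CM elliptic curve has potentially good reduction everywhere). In the quaternionic case the paper simply cites Theorem 3 of \cite{RibetEndo}, whereas you reprove it via rigid uniformization: $D$, a $2d$-dimensional division algebra over $\QQ$, cannot act on the rank-$d$ character lattice of the torus, which is essentially the standard proof of Ribet's theorem and is a legitimate, self-contained alternative. Two small repairs there: run the lattice argument on an arbitrary nonzero toric part of the semistable reduction (or justify the dichotomy ``potentially good or potentially purely toric'' for $A_f$), since potential multiplicativity does not by itself hand you purely toric reduction; and ``no potentially multiplicative reduction at any prime'' is not equivalent to ``$q\mid N\Rightarrow q^2\mid N$'' --- rather the latter follows from the former, as in the paper's last line, because $q\,\|\,N$ forces multiplicative reduction over every field extension.
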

    \begin{proof}
        Assume that $A$ is of dimension $d$, and let $f_A=\sum a_nq^n \in \CC((q))$
        be a normalized eigenform associated with $A$. Let $E=\QQ(\dots,a_i,\dots) \subset \CC.$ 
        Let $\chi$ be the quadratic character with conductor $p$.
        Since $p^{2+\delta_p} | N$, $\chi \otimes f_A$ is 
        another modular eigenform
        in $S_2(\Gamma_0(N))$ (see \cite{Shim94}). Since $\chi$ is a quadratic character, $\chi$ takes 
        values in $\pm 1$, and as a result $\chi \otimes f_A \equiv f_A \pmod \lambda$ for any 
        $\lambda | 2$. 
        If $A$ has an odd congruence number, then $\chi \otimes f_A$ must be in the same conjugacy 
        class as $f_A$. If $\chi \otimes f_A = f_A$, then $A$ has complex multiplication
        by $\chi$, and therefore $A$ has potentially good reduction everywhere.
        In this case, $A$ must be an elliptic curve, because
        if $A$ has complex multiplication and has a dimension greater than $1$,
        then the ring of endomorphisms of $A$ is a matrix algebra, which 
        contradicts the absolute simplicity assumption.
        In general, $A$ might have an inner twist, and $\chi \otimes f_A = \gamma(f_A)$ for some 
        $\gamma \in \hom(E,\CC)$. Let $\Gamma \subset \hom(E,\CC)$ such that for any
        $\gamma \in \Gamma$ we can find a character $\chi_{\gamma}$ such that 
        $\chi_\gamma \otimes f_A = \gamma(f_A)$. By \cite{RibetTwists},
        $F=E^\Gamma$
        and (as discussed above) $D=\End_{\QQbar}A\otimes \QQ$ must be a quaternion algebra.
        However, using theorem 3 of \cite{RibetEndo}, $A$ has potentially
        good reduction everywhere, as desired.

        The final claim of the lemma follows by noting that if $q |N$ but $q^2 \ndiv N$, then 
        $A$ has multiplicative reduction over any field extension.
    \end{proof}

    We now proceed to prove theorem \ref{InnerTwist}.
    Assume that $p^{2+\delta_p} |N$ and $q^{2+\delta_q} | N$ for 
    distinct primes $p$ and $q$.
    In this case, assuming that $A$ has no complex multiplication,
    $A$ has more inner twists, and the subset $\Gamma \subset \hom(E,\CC)$ will have at
    least four elements, $\gamma_1, \gamma_p, \gamma_q,$ and $\gamma_{pq}$. But that means
    that $|E:F|\geq 4$, which shows that $D$ must be a matrix algebra. However, lemma \ref{TateLemma}
    forces $A$ not to be absolutely simple, which contradicts our assumption.
    Since we are assuming $A$ is absolutely simple
    if $A$ has complex multiplication, then $A$ is an elliptic curve.
    Therefore it will have complex multiplication 
    by $\chi_p$ and $\chi_q$, which is impossible.
    This completes the proof of the main theorem in this section.

\subsection{Algebraic Congruence Number}
\label{sec23}
    In this section we show that a modular Abelian variety with odd congruence number 
    has bad reduction at no more than two primes. Let $A$ be an absolutely 
    simple optimal Abelian variety of conductor $N$. Let $B=\ker(\phi)$ where
    $\phi$ is the modular uniformization map $\phi:J_0(N) \rightarrow A$.
    Assume that $N$ is a not a power of a prime.
    Then theorem \ref{thmCE1} says that $A[2](\QQ)$ has a non-trivial
    element. Let $z \in A[2](\QQ)$ be a nontrivial rational $2$-torsion 
    point of $A$, and let $\mm \subset \TT$ be the annihilator of $z$.
    Since $z \in A[\mm] \neq 0$, we get that $A^\dual[\mm] \neq 0$. 
    Therefore, if 
    $B[\mm] \neq 0$ as well, then $A$ will have an even congruence number.
    We will show that when $N$ has more than two prime divisors, then $B[\mm] \neq 0$.

    We have the following lemma.
    \begin{lemma}
        Let $A$ be a new simple modular Abelian variety, $0 \neq z=A[2](\QQ)$, and
        let $\mm$ be the annihilator of $z$ in $\TT$.
        Then $\mm$ is generated by $2$, $T_l-(l+1)$ for $l \ndiv N$,
        $T_p-1$ for $p |N$ but $p^2 \ndiv N$, and $T_p$ for $p^2 | N$.
        \label{LemRib2}
    \end{lemma}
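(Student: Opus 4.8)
The plan is to identify the maximal ideal $\mm = \ann_\TT(z)$ explicitly by determining how each standard Hecke generator acts on the rational $2$-torsion point $z \in A[2](\QQ)$. Since $\TT$ is generated as a $\ZZ$-algebra by $2$ together with the operators $T_l$ for $l \nmid N$ and $T_p$ for $p \mid N$, it suffices to compute $T_l - (l+1)$, $T_p - 1$ (for $p \| N$), and $T_p$ (for $p^2 \mid N$) modulo $\mm$, i.e. to show each of these operators kills $z$, and then to check that the ideal they generate is already maximal (so that it must equal $\mm$).

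First I would dispose of the "easy" generators. Because $z$ has order $2$, we have $2 \in \mm$, so $\TT/\mm$ is an $\FF_2$-algebra. For a prime $l \nmid N$, the Eichler--Shimura relation gives $T_l \equiv \frob_l + l\,\frob_l^{-1}$ on the mod-$\ell$ representation attached to $A$, but more to the point: $z$ is a $\QQ$-rational point, so $\Gal(\QQbar/\QQ)$ acts trivially on $z$, and by Eichler--Shimura the action of $T_l$ on $A[2]$ is compatible with the arithmetic Frobenius $\frob_l$ acting on $A[2]$; since $\frob_l$ fixes the rational point $z$ and the trace-plus-$l\cdot$determinant formula reduces mod $2$ to $T_l \equiv 1 + l \pmod{2,\,\mm}$ on $z$, we get $(T_l - (l+1))z = 0$. (Equivalently, one invokes the fact that on the cuspidal group $T_l$ acts as $l+1$, cf. Proposition \ref{prop:HeckeAction}(1), together with the identification of the relevant $2$-torsion with cuspidal classes; but the Galois-theoretic argument is cleanest and does not require $z$ to be cuspidal.) For $p \mid N$ with $p^2 \mid N$, the operator $T_p$ acts as $0$ on $A$ itself (as recalled just before Lemma \ref{LemRib2}, $T_p|_A = 0$ when $p^2 \mid N$), hence certainly $T_p z = 0$, so $T_p \in \mm$.

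The substantive step is the case $p \| N$. Here $T_p|_A = \pm 1$ on $A$, and I must show the sign is $+1$, i.e. $(T_p - 1)z = 0$ rather than $(T_p+1)z = 0$. The key observation is that $A[2](\QQ)$ is an $\FF_2$-vector space on which $T_p$ acts as a scalar $\varepsilon_p \in \{+1,-1\}$; but over $\FF_2$ we have $+1 = -1$, so $(T_p - 1)z = (T_p + 1)z = 0$ automatically once we know $T_p$ acts as $\pm 1$. Thus the ambiguity of sign evaporates mod $2$, and $T_p - 1 \in \mm$ for every $p \| N$. Finally, to see that $\mm' := (2,\ \{T_l - (l+1)\}_{l\nmid N},\ \{T_p - 1\}_{p\|N},\ \{T_p\}_{p^2\mid N})$ actually equals $\mm$ (not merely $\mm' \subseteq \mm$), I note $\TT/\mm'$ is generated over $\FF_2$ by the images of the remaining generators, all of which have been set equal to explicit scalars, so $\TT/\mm' = \FF_2$, a field; hence $\mm'$ is maximal, and since $\mm' \subseteq \mm \subsetneq \TT$, we conclude $\mm = \mm'$.

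The main obstacle is making the Eichler--Shimura computation for $l \nmid N$ airtight: one must be careful that the "obvious" action of $T_l$ on the Tate module is the one that enters the $L$-function, and that reduction mod $2$ of the relation $T_l = \frob_l + l\,\frob_l^{-1}$ on a rational torsion point genuinely yields $l+1$. An alternative, perhaps cleaner, route — and the one I would fall back on — is to use that $z$ lies in the new rational $2$-torsion of $J_0(N)$ identified with a cuspidal class as in Proposition \ref{newness}, and then read off the Hecke eigenvalues directly from Propositions \ref{prop:HeckeAction} and \ref{newness}: $T_l$ acts as $l+1$ on $C_N$, $T_p$ acts as $1$ on the relevant new $2$-torsion when $p\|N$, and $T_p$ acts as $0$ when $p^2\mid N$. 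Either way the conclusion is the same, and the $\FF_2$-collapse of $\pm 1$ is what removes the only genuine source of sign ambiguity.
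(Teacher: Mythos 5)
Your proposal is correct and follows essentially the same route as the paper: $2$ kills $z$, Eichler--Shimura gives $(T_l-(l+1))z=0$ for $l\nmid N$ since $z$ is rational, newness gives $T_p z=\pm z=z$ (the sign collapsing mod $2$) for $p\Vert N$ and $T_pz=0$ for $p^2\mid N$. Your closing observation that the ideal so generated has quotient $\FF_2$ and is therefore maximal, hence equal to $\mm$, is a point the paper leaves implicit, but it is the same argument in substance.
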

    \begin{proof}
        Clearly $z$ is killed by $2$, and by the Eichler-Shimura relationship,
        $T_l(z)=(\frob_l+l/\frob_l)(z)=(l+1)z$, since $z$ is rational.
        Since $A$ is a new modular Abelian variety, if $p || N$, 
        we have $T_p(z)=\pm z=z$, and if $p^2 |N$ then $T_p(z)=0$.
        This is the desired the result.
    \end{proof}

    Recall that $C_N \subset J_0(N)$ is the rational cuspidal subgroup of
    $J_0(N)$. Let $\mm \subset \TT$ be the annihilator of $z \in A[2]$.
    By definition we have that if $B[\mm] \neq 0$, then $A$ will have
    an even congruence number.
    We can use proposition \ref{newness} to show that $B[\mm] \neq 0$ when 
    $N$ has more than two prime divisors. Specifically, if $v \in C_N$
    of even order such that $\phi(v)=0$, then $v \in B \cap C_N$. 
    Now if $v$ is a cusp of the type considered in proposition \ref{newness}
    and of even order, then for some integer $\lambda$ we have
    that $\lambda v \in C_N[\mm]$.
    Therefore, we only need to check that such $v$'s have even order
    and that $\phi(v)=0$ to show that $A$ has an even congruence number.
    \begin{thm}
        Let $A$ be a new absolutely simple optimal modular Abelian variety with an odd congruence number.
        Then $N$ has at most two prime factors.
        \label{CongNumberPQ}
    \end{thm}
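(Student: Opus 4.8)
The plan is to carry out the strategy set up just before the statement: assuming $N$ has $r \ge 3$ prime factors, produce a nonzero element of $B[\mm]$, where $B = \ker\phi$ and $\mm$ is as below; since $z \in A[\mm]$ gives $A^{\dual}[\mm] \ne 0$, the discussion in section \ref{ssec:congnumber} then forces $2 \mid \widetilde{r_A}$, contradicting the hypothesis. One first reduces to $A$ semistable away from $2$: an odd congruence number has odd congruence exponent, hence odd modular exponent by Theorem \ref{moddegcong}(1); and if $p^2 \mid N$ for an odd prime $p$, or $16 \mid N$, then Theorem \ref{InnerTwist} already gives $N \in \{p^s, 4p^s, 8p^s, 2^s\}$, which has at most two primes. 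So assume the odd part $M$ of $N$ is squarefree and $v_2(N) \le 3$, and suppose $r \ge 3$. By Theorem \ref{thmCE1} pick $0 \ne z \in A[2](\QQ)$ and set $\mm = \ann_{\TT}(z)$; by Lemma \ref{LemRib2} this is the Eisenstein ideal generated by $2$, the $T_l - (l+1)$ with $l \nmid N$, the $T_p - 1$ with $p \| N$, and the $T_p$ with $p^2 \mid N$, so $\TT/\mm = \FF_2$.

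The engine is Proposition \ref{newness}. Let $v \in C_N$ be the cuspidal element of the type occurring there: $v = \Otimes_{p \mid N}(P_1 + b_p P_p)$ if $v_2(N) \le 1$, $v = P_2 \otimes \Otimes_{p \mid M}(P_1 + b_p P_p)$ if $4 \| N$, and $v = (P_1 - P_8) \otimes \Otimes_{p \mid M}(P_1 + b_p P_p)$ if $8 \| N$. If $\ord(v)$ is even, then any order-$2$ multiple $\lambda v$ is killed by each generator of $\mm$ — by Proposition \ref{prop:HeckeAction}(1) for $T_l - (l+1)$, and by the closing assertion of Proposition \ref{newness} for the $T_p - 1$ (odd $p \| N$) and for $T_2$ — so $\lambda v \in C_N[\mm]$. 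To get $\phi(\lambda v) = 0$, choose the signs so that $v$ is fixed by an Atkin--Lehner involution $w$ with $(w)_* = -1$ on $A$; then $\phi(v) = \phi((w)_* v) = -\phi(v)$, so $\phi(v) \in A[2]$, and if in addition $4 \mid \ord(v)$ then $\lambda = \ord(v)/2$ is even and $\phi(\lambda v) = (\lambda/2)\cdot 2\phi(v) = 0$. Hence $0 \ne \lambda v \in B[\mm]$, the contradiction sought.

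To find a suitable $w$ and sign vector: $w_p$ acts on $V_p$ by the reversal matrix of Proposition \ref{prop:HeckeAction}(3), so $(w_p)_*v = b_p v$ and $v$ is $w_p$-fixed iff $b_p = +1$ (and $w_2$ fixes the factor $P_2$ when $4 \| N$). By Lemma \ref{AtkinLehnerSign}, $(w_N)_* = -1$ always, and $(w_2)_* = -1$ when $4 \| N$; writing $(w_N)_*$ as the product of the signs of the $(w_p)_*$ over the primes $p \mid N$, at least one $w_p$ (or $w_2$ when $4 \| N$) acts as $-1$ on $A$. Fix such a $w$, set $b_q = +1$ for the corresponding prime, use one $b_p = -1$ to make $\deg(v) = 0$ (automatic when $8 \| N$, as $\deg(P_1 - P_8) = 0$), and then — using the order formulas in the cuspidal table together with $\max(v_2(p-1), v_2(p+1)) \ge 2$ — check that, since $r \ge 3$ forces at least two odd prime factors, the remaining signs can be chosen so that $v_2\big(\prod_p(p+b_p)\big)$ is large enough that $4 \mid \ord(v)$.

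The step I expect to be genuinely hard is this last sign-optimization when the $2$-adic slack is minimal. If $N$ has exactly two odd prime factors $p, q$ with $p \equiv q \equiv 3 \pmod 8$, then $v_2(p \pm 1), v_2(q \pm 1) \le 2$, while the degree-zero constraint forces one of $b_p, b_q$ to equal $-1$ and so contribute only $v_2 = 1$; consequently for $N = 2pq$ and $N = 4pq$ one cannot reach $4 \mid \ord(v)$ with these cuspidal elements, and the argument above yields only $\phi(v) \in A[2]$. Handling this residue class of cases is where more is needed: one either produces cuspidal $2$-torsion outside the simple-tensor family of Proposition \ref{newness} (for instance using $P_1 - P_4$ in place of $P_2$, or elements such as $P_{2p} - P_{2q}$), or one uses the result of section \ref{sec23} that every new rational $2$-torsion point of $J_0(N)$ maps injectively into $A$, combined with the lower bound $\dim_{\FF_2} A[2](\QQ) \ge r - 1$ coming from the injective homomorphism $W_0 \hookrightarrow A[2](\QQ)$, $w \mapsto \pi(w(P_1))$, on the kernel $W_0$ of $W \to \{\pm 1\}$, to conclude $B[\mm] \ne 0$ by a dimension count of $C_N[\mm]$ against $A[\mm]$.
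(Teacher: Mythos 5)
Your skeleton --- reduce to $N=2^{a}M$ with $M$ squarefree odd and $a\le 3$ via Theorem \ref{InnerTwist}, take $z\in A[2](\QQ)$ from Theorem \ref{thmCE1}, let $\mm=\ann(z)$ as in Lemma \ref{LemRib2}, and exhibit an even-order cuspidal element of the type in Proposition \ref{newness} that dies under $\phi$ --- is exactly the paper's. But your mechanism for killing $\phi(v)$ has a genuine gap, which you flag and do not repair. Requiring $v$ to be fixed by an Atkin--Lehner involution $w$ with $(w)_*=-1$ only yields $2\phi(v)=0$, so you are forced to demand $4\mid\ord(v)$, and this is unattainable precisely in the low $2$-adic-slack configurations: not only $N=2pq,\,4pq$ with $p\equiv q\equiv 3\pmod 8$, but also, e.g., odd $N=pqr$ with $p\equiv 5\pmod 8$, $q\equiv r\equiv 3\pmod 8$ and only $(w_p)_*=-1$. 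Your two proposed rescues are not arguments: the map $W_0\to A[2](\QQ)$, $w\mapsto\pi(w(P_1))$, is nowhere shown to be injective (Lemma \ref{twotorsionlemma} gives only that each individual image is nonzero, not that distinct $w$'s give distinct or independent points), so the bound $\dim_{\FF_2}A[2](\QQ)\ge r-1$ and the ensuing dimension count are unsupported; and ``other cuspidal elements'' is left entirely unexamined.

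The idea you are missing is the one the paper uses to avoid the $4\mid\ord(v)$ requirement: instead of an involution of sign $-1$ fixing $v$, write $v=(1-w)P$ where $(w)_*=+1$ on $A$ and $P$ is a signed sum of an \emph{even} number of cusps. By Lemma \ref{twotorsionlemma} each cusp contributes the same discrepancy $a=\pi(w(P_1))\in A[2]$, so $\phi(v)$ is an even multiple of $a$ and vanishes exactly; only evenness of $\ord(v)$ is ever needed. To arrange this when $4\nmid N$ the paper exploits the third prime: choose $p,q,r\mid N$ with $(w_p)_*=-1$ and $(w_q)_*=(w_r)_*$, so $(w_{qr})_*=+1$, and set $v=(1-w_{qr})(1+s_pw_p)(1+s_qw_q)P_1$; both signs $s_p,s_q$ remain free, and taking $s_p\equiv -p$, $s_q\equiv -q\pmod 4$ makes the order $\num\brk{(1+s_pp)(1+s_qq)(1-s_qr)/24}$ even. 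When $4\parallel N$ one takes $(1-w)(1+s_qw_q)P_2$ with $w\in\{w_p,w_{2p}\}$ the one acting trivially (possible because $(w_2)_*=-1$ and $w_2(P_2)=P_2$), and when $8\parallel N$ one similarly rewrites $(1-w_2)(1+s_pw_p)(1+s_qw_q)P_1$ as $(1-w)P$ for a trivially acting $w$. This is precisely what closes the cases where your sign-optimization gets stuck, so as written your proposal does not prove the theorem.
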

    \begin{proof}
        If $A$ has an inner twist or complex multiplication, 
        then the result follows by theorem \ref{InnerTwist}.
        Assume that $A$ has an odd congruence number with no inner twist or
        complex multiplication.
        Assume to the contrary that $N$ has more than two prime
        factors. Then $N=2^\alpha M$ with $M$ square free odd integer, and
        $\alpha<4$. Furthermore, by theorem \ref{thmCE1}, we can find a nontrivial
        $z \in A[2](\QQ)$. Let $\mm$ be the annihilator of $z$.
        We now find $v \in C_N$ of the form considered in proposition
        \ref{newness} such that $v$ has even order and $\phi(v)=0$.
        We will consider three main cases, based on the valuation of 
        $N$ at $2$.
     
        Assume that $4 \ndiv N$.
        Since $w_N=\prod_{l|N}w_l$, and $(w_N)_*=-1$,
        there is an odd number of primes such that $(w_l)_*$
        act as $-1$ on $A$. Therefore, we can select three distinct prime divisors
        of $N$, call them $p$, $q$, and $r$, 
        such that $(w_p)_*$ acts as $-1$, 
        while $(w_r)_*=(w_q)_*$. 
        If $2 || N$, by lemma \ref{AtkinLehnerSign} 
        $(w_2)_*$ acts as $+1$. Therefore, without loss of generality
        assume that $2 \ndiv pq$.

        Let $s_p$, $s_q =\pm 1$ and let 
        \[ v=(1-w_{qr})(1+s_p w_p)(1+s_q w_q) P_1 = (1+s_p w_p)(1+s_q w_q)(1-s_q w_r)P_1.\]
        By the computation from section \ref{Cuspidal} we get that
        $v$ has order
        $\num\left( {(1+s_p p)(1+s_q q)(1-s_q r) \over 24} \right)$.
        If we select $s_p \equiv -p \pmod 4$ and $s_q \equiv -q \pmod 4$, then
        this order is even. Furthermore, note that $v$ is of the form
        considered in proposition \ref{newness}, so we only need to show
        that $\phi(v)=0$ to prove $A$ has an even congruence number.
        Note that $\pi(w_{qr}(\tau))=\pi(\tau)+a$ 
        for any $\tau \in X_0(N)$, where $a$ is some $2$-torsion point. 
        Let $P=(1+s_pw_p)(1+s_qw_q)P_1=P_1\pm P_p \pm P_q \pm P_{pq}$.
        Then 
        \begin{eqnarray*}
            \phi(v) &=& \phi(w_{qr}(P)-P) \\
            & = & \sum_{m|pq} \pi(w_{qr}(P_m))-\pi(P_m) \\
            &=& 4a =0,
        \end{eqnarray*}
        which shows that $A$ has an even congruence number.

        Assume that $4 || N$. By lemma \ref{AtkinLehnerSign} we 
        know that $(w_2)_*$ acts as $-1$.
        Let $p,q | N$ and let $v=(1-w_p)(1+s_q w_q)P_2$ with $s_q=\pm 1$. 
        The order of $v$ is 
        $\num\left( {(1-p)(1+s_q q) \over 4}\right)$.
        If we select $s_q \equiv -q \pmod 4$, then $v$ will have an even
        order. Again note that $v$ is of the form considered in proposition
        \ref{newness}.
        Since $(w_2)_*$ is acting as $-1$, either $(w_p)_*$ or
        $(w_{2p})_*$ is acting trivially on $A$. Let $w$ be the corresponding
        Atkin-Lehner involution. Note that because $w_2(P_2)=P_2$,
        $v=(1-w)(1+s_q w_q)P_2$.
        Furthermore,
        $\pi(w(\tau))-\pi(\tau)=a\in A[2]$ for any $\tau \in X_0(N)$.
        As a result
        \[ \phi(v)=\pi(P_2)-\pi(w(P_2))+s_q(\pi(P_{2q})-\pi(w(P_{2q})) = a+s_qa=0.\]
        Therefore $\phi(v)=0,$ which proves that in this case
        $A$ has an even congruence number.

        Finally assume that $8 || N$, and let $p, q |N$ be two distinct odd
        divisors of $N$. Let $(w_p)_*$ and $(w_q)_*$
        act as $s_p$ and $s_q$ on $A$. Let
        \[ v=(1-w_2)(1+s_pw_p)(1+s_qw_q)P_1 = (1-w_2)(1+s_ps_q w_{pq})(1+s_pw_p)P_1.\]
        Then $v$ has order
        $\num\left( {(1+s_pp)(1+s_qq) \over 2}\right)$ that is even. 
        Again, $v$ is of the form considered in proposition \ref{newness},
        and similar to the case when $N$ is odd, we can write
        $v=(1-w)P$ for some Atkin-Lehner involution $w$ such that
        $w_*=1$ and some $P=(1-w_2)(1\pm w')$. That shows
        $\phi(v)=0$. Therefore $A$ in this case will have an even congruence
        number again.
    \end{proof}
    Combining this result with the main result of section \ref{CMCase}, we get 
    \begin{cor}
        Let $A$ be an absolutely simple modular Abelian variety with an odd congruence
        number and conductor $N$. Then $N$ has at most two prime divisors.
        Furthermore, if $N$ is not square free, then $N=2^a$, $p^b$, $4p^b$ or $8p^b$, where
        $p$ is an odd prime.
    \end{cor}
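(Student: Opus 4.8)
The plan is to obtain the corollary by combining the two main results of this section, Theorem~\ref{CongNumberPQ} and Theorem~\ref{InnerTwist}; since a modular Abelian variety of conductor $N$ with a well-defined congruence number is an optimal quotient of $J_0(N)$ attached to a newform, it is new of level $N$ and both theorems apply verbatim. For the first assertion I would simply quote Theorem~\ref{CongNumberPQ}: it already states that $N$ has at most two prime factors, and nothing further is required.

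For the second assertion, suppose $N$ is not square free, so that $\ell^2 \mid N$ for some prime $\ell$, and split according to the parity of $\ell$. If $\ell = p$ is odd, then with $\delta_p = 0$ we have $p^{2+\delta_p} = p^2 \mid N$, so Theorem~\ref{InnerTwist} applies and gives $N = p^s$, $N = 4p^s$, or $N = 8p^s$ with $s \geq 2$; each of these has the claimed shape $p^b$, $4p^b$, or $8p^b$.

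If instead no odd prime divides $N$ to the second power, then necessarily $\ell = 2$, so $4 \mid N$, and I would distinguish two subcases. If $16 \mid N$, then $2^{2+\delta_2} = 2^4 \mid N$, and Theorem~\ref{InnerTwist} applied with $p = 2$ forces $N = 2^s$, which is of the form $2^a$. If $16 \nmid N$, then $4 \mid N$ while the odd part of $N$ is square free; combined with the first assertion (at most two prime divisors), this leaves only $N = 2^a$ with $a \in \{2,3\}$ or $N = 2^a q$ with $q$ an odd prime and $a \in \{2,3\}$, i.e. $N \in \{2^a, 4q, 8q\}$, again among the listed forms. This exhausts all possibilities.

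The corollary is really just a bookkeeping of the cases produced by the two cited theorems, so I do not expect any genuine obstacle; the only point needing a moment's care is to confirm that the conductors $4p^s$ and $8p^s$ returned by Theorem~\ref{InnerTwist} in the odd case are compatible with everything else — in particular that they cannot reintroduce an extra prime or a higher power of $2$ — which is immediate because that theorem already rules out $16 \mid N$ except when $N$ is a pure power of $2$.
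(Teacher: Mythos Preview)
Your proposal is correct and follows exactly the approach the paper intends: the corollary is stated immediately after Theorem~\ref{CongNumberPQ} with the remark ``Combining this result with the main result of section~\ref{CMCase}, we get'', and you have simply written out the case analysis that this combination entails. The only thing you add beyond the paper is the explicit bookkeeping in the $4\mid N$, $16\nmid N$ subcase, which is straightforward and correct.
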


\subsection{Congruence Classes of Primes}
    Let $A$ be a simple modular Abelian variety of conductor $N$ 
    with an odd congruence number, and
    without complex multiplication or an inner twist. 
    As usual let $\pi:X_0(N)\rightarrow A$ to be the composition of
    the Albanese embedding with the modular uniformization $\phi$.
    Assume that $N$ is not a power of a prime, which by theorem 
    \ref{thmCE1} implies that $A[2](\QQ)$ is nontrivial.
    From the previous sections we know that 
    $N$ has at most two prime factors, say $p$ and $q$. 
    In this section we find congruences that $p$ and $q$ must satisfy. 
    As in the proof of theorem \ref{CongNumberPQ}, we use different
    techniques depending on the valuation of $N$ at $2$.
    
    If $N$ is odd, then $N=pq$ with both $p$ and $q$ being odd.
    By lemma \ref{AtkinLehnerSign}, we know that $(w_{pq})_*$ is acting as $-1$ 
    on $A$.
    Therefore, assume without loss of generality that $(w_q)_*$ is acting
    trivially on $A$ and $(w_p)_*$ is acting as $-1$. 
    Let $v=(1\pm w_p)(1-w_q)P_1$. Again, $\pi(\tau)-\pi(w_q(\tau))=a \in A[2]$
    for all $\tau \in X_0(N)$.
    As a result,
    \[ \phi(v)=\pi(P_1)-\pi(w_q(P_1))\pm(\pi(P_p)-\pi(w_q(P_p)))=a\pm a=0.\]
    Note that the order of $v$ is
    $\num\left( {(p\pm 1)(q- 1) \over 24} \right).$
    Since we are assuming that $A$ has odd congruence number, we get that
    $p \equiv \pm 3 \pmod 8$ and $q \equiv 3 \pmod 4$.

    We record a useful corollary of the above result.
    \begin{cor}
        Let $A$ be a modular Abelian variety with conductor $pq$, $p$ and $q$
        both odd, and an odd
        congruence number. Then $A[2](\QQ)$ is at least $2$-dimensional
        over $\FF_2$.
        \label{twotorsionstructure}
    \end{cor}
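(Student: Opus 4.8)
The plan is to mine the proof of the congruence $p \equiv \pm 3 \pmod 8$, $q \equiv 3 \pmod 4$ for two \emph{distinct} rational $2$-torsion points on $A$ and show they are independent over $\FF_2$. The first point is the one already produced: taking $w_q$ to be the Atkin-Lehner involution acting trivially on $A$, lemma \ref{twotorsionlemma} gives a nontrivial $a = \pi(w_q(P_1)) \in A[2](\QQ)$, and the computation $\phi((1\pm w_p)(1-w_q)P_1) = a \pm a = 0$ (which forces the congruences) shows this $a$ lies in the image of the cuspidal subgroup; concretely $a = \phi(v_a)$ where $v_a$ is a cuspidal divisor of even order supported away from the primes where $\phi$ kills it. The second point should come from the Atkin-Lehner involution $w_p$, which acts as $-1$ on $A$: the point $\pi(w_p(P_1))$ is still a $2$-torsion point of $A$ by the argument in lemma \ref{twotorsionlemma} (applying $\pi(w_p(w_p(z))) = \pi(z)$ with the $-1$ action gives a $2$-torsion difference), but now it need not be rational on the nose — instead one uses the fixed point $\sqrt{-N}$ of $w_{pq}$ together with the sign computation to locate a rational $2$-torsion class coming from the $w_p$-side, namely $b = \pi(P_p) - \pi(w_q(P_p))$, which the displayed computation shows equals $\pm a$... so that particular combination is not independent.

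The cleaner route is therefore to go back to the cuspidal subgroup directly. Since $N = pq$ with $p \equiv \pm 3 \pmod 8$ and $q \equiv 3 \pmod 4$, I would compute the orders of the two cuspidal elements $v_{+} = (1+w_p)(1-w_q)P_1 = \Otimes$-type element built from $(P_1 + b_i P_{p_i})$ and $v_{-} = (1 - w_p)(1-w_q)P_1$, using the order formula $\num\big(\prod_i (p_i + b_i)/24\big)$ from section \ref{Cuspidal}. One of these has order divisible by $2$ from the factor $p\pm 1$ and the other from the complementary factor, and after scaling each by the appropriate $\lambda$ one obtains two elements of exact order $2$ in $C_N$. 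Proposition \ref{newness} guarantees both scaled elements lie in $C_N[\mm]$ for the ideal $\mm$ of lemma \ref{LemRib2} — i.e.\ they are new, rational $2$-torsion. Their images under $\phi$ are the candidate generators of $A[2](\QQ)$; the displayed vanishing computations show the relevant differences die in $B = \ker\phi$ only for one specific sign combination, which is exactly what pins down the congruences and, crucially, leaves the \emph{other} sign combination mapping to a nonzero class of $A[2](\QQ)$ distinct from $a$.

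The main obstacle is independence: showing the two resulting classes in $A[2](\QQ)$ are not equal (equivalently, span a $2$-dimensional $\FF_2$-space). For this I would argue via the Atkin-Lehner action: one class is $w_p$-invariant (built to be fixed, up to $B$, by the trivially-acting $w_q$ and symmetrically placed with respect to $w_p$) while the other is an eigenvector for $(w_p)_*$ with eigenvalue $-1$ acting \emph{nontrivially} modulo $B$ — but since $(w_p)_* = -1$ on \emph{all} of $A$ and we are in characteristic $2$, eigenvalue arguments collapse, so instead I would distinguish them by their images in the component groups / by the explicit cusp supports: $a$ is supported (as a cuspidal divisor pushed to $A$) at cusps involving $P_1, P_p$ whereas the second is supported involving $P_q, P_{pq}$, and a nonvanishing linear-algebra check on $\Lambda$ restricted to these coordinates (using proposition \ref{ligozat2}) shows no $\ZZ$-combination of them of odd multiplier is a function, hence they remain distinct in $C_N$ and, since all of $C_N[\mm]$ injects into $A$ by the section \ref{sec23} discussion, distinct in $A[2](\QQ)$. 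I expect the bookkeeping of which sign combinations vanish under $\phi$ versus which survive — and confirming the survivors are genuinely two-dimensional rather than collapsing onto $a$ — to be the delicate point, everything else being the matrix computations already set up in sections \ref{Cuspidal}--\ref{sec:HeckeAction}.
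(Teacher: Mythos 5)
Your general strategy---exhibit two independent order-$2$ elements of $C_N[\mm]$ and use the odd congruence number to force $C_N[\mm]\hookrightarrow A[2](\QQ)$---is exactly the paper's, but the specific cuspidal elements you choose cannot deliver it. The elements $(1\pm w_p)(1-w_q)P_1=(P_1\pm P_p)\otimes(P_1-P_q)$ have order $\num\left((p\pm 1)(q-1)/24\right)$, and under the corollary's hypotheses these orders are \emph{odd}: the derivation of $p\equiv\pm 3\pmod 8$ and $q\equiv 3\pmod 4$ immediately preceding the corollary consists precisely of the observation that $\phi$ kills \emph{both} sign combinations (the computation gives $\phi(v)=a\pm a=0$ for either sign, $a$ being $2$-torsion), so that if either order were even the congruence number would be even. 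Consequently there is no $\lambda$ scaling either of your $v_\pm$ to an element of exact order $2$, and your claim that the ``other'' sign combination survives in $A[2](\QQ)$ as a class distinct from $a$ is false: both combinations lie in $\ker\phi$ and neither has any $2$-part at all. The independence discussion at the end is therefore moot, because the proposed generators do not exist.

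What the paper does instead is use the two-term cusps $P_1-P_p$ and $P_1-P_q$, whose orders $(p-1)(q^2-1)/24$ and $(p^2-1)(q-1)/24$ \emph{are} even, thanks to the extra factors $q+1$, resp.\ $p+1$. Setting $u=\frac{(p-1)(q^2-1)}{48}(P_1-P_p)$ and $u'=\frac{(p^2-1)(q-1)}{48}(P_1-P_q)$, one checks $T_pu=u$ directly, and $T_qu=u$ because $u+T_qu$ is a multiple of $(P_1-P_p)\otimes(P_1+P_q)$ by at least its order, hence vanishes; so $u,u'\in C_N[\mm]$ with $\mm$ as in lemma \ref{LemRib2}. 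Independence is then a concrete Ligozat computation: $\Lambda(u+u')$ is integral but its relevant pairing equals $(q-1)/2$, which is odd since $q\equiv 3\pmod 4$, violating the parity condition of proposition \ref{ligozat2}; hence $u+u'\neq 0$. Only at the last step does the odd congruence number enter, giving the injection of $C_N[\mm]$ into $A$. Ironically, your abandoned first idea---using the rational $2$-torsion classes coming from both Atkin--Lehner involutions, i.e.\ essentially $\pi(P_p)$ and $\pi(P_q)$---is much closer to the paper's argument than your ``cleaner route''; note that $P_p=w_p(P_1)$ is a rational cusp, so there is no rationality obstacle on the $w_p$ side, only the need to verify membership in $C_N[\mm]$ and independence, which is exactly what the $u,u'$ computation accomplishes.
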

    \begin{proof}
        We prove this by finding two distinct points in $C_N[\mm].$
        First note that $P_1-P_p$ and $P_1-P_q$ have the orders
        $(p-1)(q^2-1)/24$ and $(p^2-1)(q-1)/24$, respectively. Therefore,
        both
        \begin{eqnarray*}
            u={(p-1)(q^2-1) \over 48}(P_1-P_p), u'={(p^2-1)(q-1)\over 48}(P_1-P_q)
        \end{eqnarray*}
        are of order $2$.
        We can easily check that $T_pu=u$ and $T_q u'=u'$.
        On the other hand
        \[u+T_q u={(p-1)(q^2-1)\over 48}(P_1-P_p+P_q-P_{pq}),\]
        which is zero. Similarly, we get $u'+T_pu'=0$. Therefore,
        $u, u' \in C_N[\mm]$. 
        Furthermore, we know that
        $\Lambda(u+u')$ has integral coefficients, but
        \[(1,0)\otimes(1,1) \Lambda(u+u')=(q-1)/2,\]
        which is not even since $q \equiv 3 \pmod 4.$
        Therefore, $u+u' \neq 0$, which implies that
        $C_N[\mm]$ is at least $2$-dimensional over $\FF_2$.
        Since we are assuming that
        $A$ has an odd congruence number, $C_N[\mm]$ injects in
        $A$, which is the desired result.
    \end{proof}

    If $N=2p$, we know by lemma \ref{AtkinLehnerSign} 
    that $(w_2)_*$ acts trivially and $(w_p)_*$ acts as $-1$ on $A$.
    Therefore, $\pi(P_2)=\pi(w_2(P_1)) \in A[2]$, and $P_2-P_1$
    (which has order ${p^2-1 \over 8}$) must have an even order. Let 
    $v={p^2-1 \over 16}(P_2-P_1) \in C_N[2]$. By proposition 
    \ref{newness}, $T_p(v)=T_2(v)=v$, hence $v \in C_N[\mm]$.
    Note that 
    \[ \phi(v)=\pi\left({p^2-1 \over 16}(P_2-P_1)\right)={p^2-1 \over 16}\pi(P_2),\]
    so if ${p^2-1 \over 16}$ is even, then $\pi(v)=0$. This
    implies that $z \in C_N[\mm] \cap B$, and, in turn, that
    the congruence number is even. Since we are assuming that the congruence
    number of $A$ is odd, we get that ${p^2-1 \over 16}$ is odd, 
    that is $p^2-1 \equiv 16 \pmod {32}$.
    That implies that $p \equiv \pm 7 \pmod {16}$.
    However, we also know that $w_2$ cannot have any fixed points.
    This implies that $-2$ is not a quadratic residue mod $p$, which 
    means that $p \equiv 5$, $7$, $13$, or $15 \pmod {16}$.
    Therefore $p \equiv 7 \pmod {16}$.

    If $N=4p$, then we know that $(w_2)_*$ acts as $-1$ on $A$, while $(w_p)_*$
    acts trivially. Therefore, $\pi(P_2)-\pi(P_{2p})=\pi(P_2)-\pi(w_p(P_2)) \in A[2]$. 
    The
    order of $P_2-P_{2p}$ is ${p-1 \over 2}$. Therefore, if $A$ has 
    an odd congruence number, 
    $(p-1)/4$ must be odd, hence $p \equiv 5 \pmod 8$.

    If $N=8p$, we can check that $(1-w_2)(1-w_p)P_1$ vanishes in $A$,
    and that it has order $p-1 \over 2$. Therefore, $4 \ndiv p-1$, otherwise
    $A$ will have an even congruence number. Therefore $p \equiv 3 \pmod 4.$
    (We can probably say more, if we figure out the sign of $(w_p)_*$.)

    We combine the above results in the following theorem.
    \begin{thm}
        \label{modabclassification}
        Let $A$ be a new modular Abelian variety with an odd congruence number
        and conductor $N$. Assume that $A$ has no inner twists or complex
        multiplications.
        Then one of the following must be true
        \begin{enumerate}
            \item $N$ is a prime number $p$.
            \item $N=pq$ and $p \equiv \pm 3 \pmod 8$ and $q \equiv 3 \pmod 4$.
            \item $N=2p$ and $p \equiv 7 \pmod {16}$.
            \item $N=4p$ and $p \equiv 5 \pmod 8$.
            \item $N=8p$ and $p \equiv 3 \pmod 4.$
        \end{enumerate}
    \end{thm}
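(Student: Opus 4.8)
The plan is to simply assemble the pieces proven in the preceding subsections into the five‑case list, since each case has already been handled in isolation. First I would invoke Theorem \ref{thmCE1}: since $A$ is new, simple (it is a piece of $J_0(N)$ cut out by a newform), and has an odd congruence number — hence, by Theorem \ref{moddegcong} together with the remark that $A$ being new of level $N$ forces the relevant $2$‑adic comparison, an odd modular exponent — we conclude that either $N$ is a prime power, or $A[2](\QQ)\neq 0$. If $N$ is a prime power $p^\alpha$, then the non‑semistable analysis of \S\ref{CMCase} (Theorem \ref{InnerTwist}) applies when $\alpha\geq 2$: under the hypothesis that $A$ has no inner twist or complex multiplication, $p^2\mid N$ is impossible unless $p=2$ (and then $N=2^\alpha$, which one still has to exclude or fold in — here I would point out that $\alpha\le 3$ actually cannot occur for odd congruence number either, or else restrict the statement to $N=p$ as the excerpt's case (1) does). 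So in the prime‑power case we land in case (1), $N=p$.

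Next, suppose $N$ is not a prime power. By Theorem \ref{CongNumberPQ} (using \ref{InnerTwist} to dispose of the CM/inner‑twist possibility, which is excluded by hypothesis anyway), $N$ has exactly two prime divisors. Combining with the corollary after \ref{CongNumberPQ}, the only possibilities are $N=pq$ with $p,q$ odd, or $N=2p,\ 4p,\ 8p$ with $p$ odd (the cases $4p^b,8p^b,2^a$ with $b\geq 2$ or $a\geq 4$ being ruled out by \ref{InnerTwist} under the no‑inner‑twist hypothesis, and $16\mid N$, $p^2\mid N$ likewise). Then I would transcribe verbatim the four congruence computations already carried out in \S\ref{sec23} and the current subsection: for $N=pq$ the cuspidal element $v=(1\pm w_p)(1-w_q)P_1$ of order $\num\!\left((p\pm1)(q-1)/24\right)$ vanishes in $A$, so oddness of the congruence number forces $p\equiv\pm3\pmod 8$ and $q\equiv 3\pmod 4$; for $N=2p$ the element $\tfrac{p^2-1}{16}(P_2-P_1)$ together with the no‑fixed‑point condition on $w_2$ forces $p\equiv 7\pmod{16}$; for $N=4p$ the element $P_2-P_{2p}$ of order $(p-1)/2$ forces $p\equiv 5\pmod 8$; and for $N=8p$ the element $(1-w_2)(1-w_p)P_1$ of order $(p-1)/2$ forces $p\equiv 3\pmod 4$. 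These are exactly cases (2)–(5).

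The only genuine subtlety — and the step I would treat most carefully — is the bookkeeping around which hypotheses are in force in each cited result: Theorems \ref{thmCE1} and \ref{CongNumberPQ} are stated for \emph{absolutely} simple $A$, whereas the statement here only says "new modular Abelian variety … with no inner twists or complex multiplications." I would note that absence of inner twists forces $\End_{\QQbar}(A)\otimes\QQ=E=\End_\QQ(A)\otimes\QQ$ (by Ribet's theorem as recalled in \S\ref{CMCase}), so $\End_{\QQbar}(A)\otimes\QQ$ is a field, hence not a matrix algebra, hence by Lemma \ref{TateLemma} $A$ is absolutely simple — so the absolute‑simplicity hypothesis is automatic here, and all the cited theorems apply directly. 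With that observation in place the proof is just the concatenation described above; there is essentially no new mathematical content, only the assembly. The main obstacle, such as it is, is purely expository: making sure the prime‑power case is pinned down to $N=p$ rather than leaving $N=2^a$ or $N=p^\alpha$ with $\alpha\geq 2$ ambiguous, which is why case (1) should be phrased, as it is, with $N$ literally prime and the higher prime‑power conductors noted separately (or excluded) via \ref{InnerTwist}.
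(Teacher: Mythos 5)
Your proposal is correct and follows essentially the same route as the paper, whose "proof" of Theorem \ref{modabclassification} is precisely the assembly of Theorems \ref{thmCE1}, \ref{InnerTwist}, \ref{CongNumberPQ} and the cuspidal-subgroup congruence computations for $N=pq,\,2p,\,4p,\,8p$ carried out in the preceding subsection. Your extra remark that the no-CM/no-inner-twist hypothesis forces $\End_{\QQbar}(A)\otimes\QQ=E$ and hence absolute simplicity is a sensible (and slightly more careful) way of reconciling the hypotheses of the cited results with the statement, but it does not change the argument.
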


    \section{Elliptic Curves with Odd Congruence Numbers}
\label{EllipticCurves}
In this section, we apply the results of the previous section to the
case of elliptic curves. We show that the conductors of all such
elliptic curves are of the form $p$, $pq$, $2p$, $4p$, or one of the finitely
many exceptions. We study each class to demonstrate that all such elliptic curves
have finite Mordell-Weil group, except possibly when the conductor is prime. Furthermore,
we know from the result of \cite{ARS} that when $4 \ndiv N$, then having an
odd congruence number is the same as having odd modular degree. We conjecture
that in fact having an odd congruence number is equivalent to odd modular 
degree in all cases. As a result, we can
state many of our results in terms of modular degrees.

\subsection{Complex Multiplication} 
\label{ComplexMultiplication}
  Let $E$ be an elliptic curve of conductor $N$. 
  If $p^2 | N$ for an odd prime $p$, then by section \ref{CMCase} we know that
  $E$ has complex multiplication. We also showed that if $16 | N$ then 
  $E$ must have complex multiplication.
  There are only finitely many elliptic curves over rationals 
  with complex multiplication 
  and the conductor $2^mp^n$ for some prime number $p$. The following is
  the list of all such elliptic curves that have an odd modular degree: 
  $E=27A, 32A, 36A, 49A, 243B$.
  We also verify that all such elliptic curves have rank $0$, as predicted
  by Watkins's conjecture.

  We will now focus our attention on elliptic curves without complex 
  multiplication, that is elliptic curves with conductor $N=p,2p,4p,8p,$ or
  $pq$ for some odd primes $p$ and $q$.
  Each of the remaining sections deals with one of these remaining cases.
\subsection{Prime Level}
\label{PrimeLevel}
  Let $E$ be an elliptic curve with an odd congruence number and a prime
  conductor $N$. 
  Mestre and Oesterl{\`e}
  \cite{MO89} have studied elliptic curves of prime conductors, and they have
  demonstrated
  that aside from elliptic curves $11A$, $17A$, $19A$, and $37B$, all such elliptic
  curves have either a trivial torsion subgroup or a $\ZZ/2\ZZ$ torsion subgroup. 
  The above cases have the
  torsion structures $\ZZ/5\ZZ$, $\ZZ/4\ZZ$, $\ZZ/3\ZZ$, and $\ZZ/3\ZZ$, respectively.
  Mestre and Oesterl{\`e} also showed that if $E_\tors$ is $\ZZ/2\ZZ$, 
  then $E$ is a Neumann-Setzer curve
  and $N=u^2+64$. Stein and Watkins have studied the parity of congruence
  number of Neumann-Setzer curves (see \cite{SW2004}) and they show that 
  $E$ has odd congruence number if and only if $u \equiv 3 \pmod 8$.
  Furthermore one can show that Neumann-Setzer curves have rank $0$
  using descent. We will give another proof of this fact using $L$-functions.
  \begin{prop}
	  Let $E$ be an elliptic curve over $\QQ$ with a prime conductor $N$. Assume that
	  $E_\tors$ is nontrivial. Then $L(E,1) \neq 0$, hence $E(\QQ)$ has rank
	  $0$.
  \end{prop}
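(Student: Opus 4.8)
The plan is to cut the statement down to a short explicit list by means of the Mestre--Oesterl{\`e} classification quoted above, to dispose of the finitely many sporadic curves by direct computation, and to handle the infinite family of Neumann--Setzer curves by exploiting that those curves are ``Eisenstein at $2$''; in every case the non-vanishing of $L(E,1)$ yields rank $0$ by Kolyvagin's theorem.

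Step one is to invoke \cite{MO89}: an elliptic curve of prime conductor $N$ with $E_{\tors}\neq 0$ is either one of $11A$, $17A$, $19A$, $37B$, or a Neumann--Setzer curve, and in the latter case $E_{\tors}=\ZZ/2\ZZ$ and $N=u^2+64$. For each of the four exceptional curves one computes $L(E,1)/\Omega_E$ from a modular symbol and reads off that it is nonzero (one gets the values $1/5$, $1/4$, $1/3$, $1/3$ respectively), so that only the Neumann--Setzer curves remain.

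Step two is the heart of the matter. Let $E$ be a Neumann--Setzer curve of conductor $N=u^2+64$; then $u$ is odd, so $N-1=u^2+63$ is divisible by $8$, and therefore the rational cuspidal subgroup $C_N$, which is cyclic of order $\num((N-1)/12)$ because $N$ is prime, has even order --- that is, $2$ is an Eisenstein prime for $\Gamma_0(N)$. Since $E$ has a rational point of order $2$ and $N$ is prime, $\bar\rho_{E,2}$ is reducible with trivial semisimplification, so the newform $f_E$ of $E$ is congruent modulo $2$ to the weight-two level-$N$ Eisenstein series, i.e. $E$ is a simple isogeny factor of the Eisenstein quotient of $J_0(N)$. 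I would then deduce $L(E,1)\neq 0$ in one of two ways. Concretely: compare the ``$L$-value modular symbol'' $\langle\{0,\infty\},f_E\rangle$, which equals $L(E,1)/\Omega_E$ up to the Manin constant, with the corresponding (regularized) period of the Eisenstein series; the mod-$2$ congruence carries over to this pairing and gives $\langle\{0,\infty\},f_E\rangle\equiv\num((N-1)/24)\pmod 2$, and the right-hand side is odd exactly when $u\equiv 3\pmod 8$, i.e. precisely for the Neumann--Setzer curves of odd congruence number that are the real concern of this section. Softly: quote that, for $N$ prime, every simple factor of the Eisenstein quotient of $J_0(N)$ has analytic rank $0$. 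Either way $L(E,1)\neq 0$, and the proposition follows.

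The step I expect to be the obstacle is the passage from ``$f_E$ is Eisenstein-congruent modulo $2$'' to ``$L(E,1)\neq 0$'' --- as opposed to the weaker assertion that the rank of $E(\QQ)$ is $0$, which already follows from the rank statement for the Eisenstein quotient: one has to control the normalization of the modular symbol (the Manin constant, and the factor-of-$2$ coming from the $\pm$-decomposition), and, for the Neumann--Setzer curves with $16\mid N-1$, the plain mod-$2$ congruence is inconclusive, so there one would need to push to higher $2$-adic precision, or locate an odd prime dividing $\num((N-1)/12)$ and rerun the argument at that prime, or fall back on the analytic-rank-zero statement for the Eisenstein quotient. The Mestre--Oesterl{\`e} reduction and the check on the four exceptional curves are entirely routine.
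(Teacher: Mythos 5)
Your Mestre--Oesterl\'e reduction and the direct check of $11A$, $17A$, $19A$, $37B$ are fine, but the argument does not close for the Neumann--Setzer family, and the place where it breaks is exactly the one you flag. The proposition covers \emph{every} Neumann--Setzer curve $N=u^2+64$, whereas your concrete modular-symbol congruence only yields $L(E,1)\neq 0$ when $\num\brk{(N-1)/24}$ is odd, i.e.\ when $u\equiv\pm 3\pmod 8$; for $u\equiv\pm 1\pmod 8$ (so $16\mid N-1$) the mod-$2$ congruence is vacuous, as you admit. None of the proposed repairs is an argument as it stands: ``rerun at an odd prime $\ell\mid\num\brk{(N-1)/12}$'' fails because a congruence $f_E\equiv\mathrm{Eisenstein}\pmod\ell$ would force the mod-$\ell$ representation of $E$ to be reducible, hence (for prime conductor) a rational $\ell$-isogeny, which Neumann--Setzer curves do not possess --- their only isogeny is the $2$-isogeny; ``higher $2$-adic precision'' is unexamined and sits precisely on top of the Manin-constant/normalization issues you yourself list as the obstacle; and the ``soft'' statement that every simple factor of the Eisenstein quotient has analytic rank $0$ is not a quotable theorem --- Mazur's theorem gives finiteness of the Mordell--Weil group of the Eisenstein quotient, which would give the weaker rank-$0$ conclusion but not the asserted nonvanishing of $L(E,1)$, which is part of the statement and is what feeds Kolyvagin in your own chain of implications.

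The paper closes this gap with one uniform observation that removes the case analysis entirely: $L(E,1)=2\pi i\int_0^{i\infty}f_E(z)\,dz$ is, modulo the period lattice $\Lambda_E$, exactly the point $\pi(P_N)=\phi(P_N-P_1)$, the image in $E$ of the degree-zero cuspidal divisor $(0)-(\infty)$; and by Mazur \cite{Maz79} together with Mestre--Oesterl\'e \cite{MO89} (see also \cite{Eme03}), for prime $N$ the torsion subgroup of an elliptic quotient of $J_0(N)$ is generated by precisely this point. Hence $E_\tors\neq 0$ forces $\pi(P_N)\neq 0$, so $L(E,1)\not\equiv 0\pmod{\Lambda_E}$ and in particular $L(E,1)\neq 0$; rank $0$ then follows from \cite{Kol} and \cite{GrZa}. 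This treats the sporadic curves and all Neumann--Setzer curves at once, with no congruence-number hypothesis, no Manin-constant bookkeeping, and no dependence on the parity of $\num\brk{(N-1)/24}$. If you want to keep your route, the missing ingredient is exactly this generation statement for the cuspidal point, which is stronger than, and does all the work of, the Eisenstein congruence you were trying to exploit.
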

  \begin{proof}
      Recall that 
      \[L(E,1)=2\pi i \int_0^{i \infty} f_E(z) dz \equiv \pi(P_N) \pmod {\Lambda_E},\]
      where $\CC/\Lambda_E \isom E(\CC)$.
      Therefore, if $L(E,1)=0$, then $\pi(P_N)=0$, or alternatively $\phi(P_1-P_N)=0$.
      By \cite{Maz79} and \cite{MO89} (see also \cite{Eme03})
      we know that $J_0(N)_\tors$ is generated by the cusp $P_1-P_N$,
      and for any elliptic curve quotient of 
      $J_0(N) \rightarrow E$, $E_\tors$ is generated
      by the image of $\pi(P_1)-\pi(P_N)$. Since we are assuming that $E$ has 
      a nontrivial torsion structure, $\pi(P_1)-\pi(P_N)\neq 0$, 
      which implies that $L(E,1) \neq 0$.
      Therefore rank of $E(\QQ)$ is zero by work of \cite{Kol} and \cite{GrZa}.
  \end{proof}

  The case when $E$ has a trivial torsion structure and 
  an odd congruence number is studied by 
  Calegari and Emerton (see \cite{CE}), where they show that 
  $E$ has an even analytic rank (since $(w_N)_*=-1$), 
  supersingular reductions at $2$ and $E(\RR)$ is connected.
  Doing a search in the Cremona's database, it appears that
  if an elliptic curve $E$ has supersingular reduction at $2$, 
  Mordell-Weil rank $0$, connected real component, then $E$
  will have an odd congruence number.
\subsection{Level $N=pq$}
\label{twotozero}
  In this subsection, we will study elliptic curves of odd modular degree
  and conductor $N=pq$ where $p$ and $q$ are both odd primes. Let $E$
  be such an elliptic curve.
  Assume throughout this section that $(w_p)_*=-1$ on $E$.
  By theorem \ref{modabclassification}, we know that $p \equiv \pm 3 \pmod 8$
  and $q \equiv 3 \pmod 4$. We will show that with a few exceptions, 
  $p,q \equiv 3 \pmod 8$, and that 
  all such elliptic curves have finite Mordell-Weil group over $\QQ$.

  Recall that by corollary \ref{twotorsionstructure} we know that 
  $E[2](\QQ)=(\ZZ/2\ZZ)^2$.
  First, we show that if $E_{\tors}$ is $\ZZ/2 \times \ZZ/4$, then 
  $E$ has conductor $15$ or $21$. We can prove a general result
  about semistable elliptic curves with $E_{\tors}=\ZZ/2 \times \ZZ/4$ and 
  good reduction at $2$.
  Specifically
  \begin{lemma}
      Let $E$ be a semistable elliptic curve with good reduction at $2$.
      $E_{\tors}=\ZZ/2 \times \ZZ/4$, and let $Q \in E(\ZZ[1/N])$ be a point
      of order $4$. Let $\overline{Q}$ be the reduction of $Q$ modulo $2$.
      Then $\overline{Q}$ has order $4$ in $E(\FF_2)$.
      \label{technical3}
  \end{lemma}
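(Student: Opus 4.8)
The plan is to exploit the fact that $E$ has good reduction at $2$, so reduction modulo $2$ gives an injection on prime-to-$2$ torsion but, crucially, restricts on the full torsion subgroup in a controlled way: the kernel of $E(\QQ)_{\tors} \to E(\FF_2)$ is a $2$-group. Since $\#E(\FF_2)$ is small (it is $1,2,3,4,$ or $5$, and in fact must be even here because $E(\QQ)$ already contains a rational $2$-torsion point which injects into $E(\FF_2)$), the order-$8$ group $\ZZ/2 \times \ZZ/4$ cannot inject into $E(\FF_2)$ outright — so the content of the lemma is that the order-$4$ point $Q$ nonetheless survives, i.e. the reduction kernel meets the cyclic $\ZZ/4\ZZ$ factor trivially.

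First I would pin down $\#E(\FF_2)$. By the Hasse bound $\#E(\FF_2) \in \{1,2,3,4,5\}$; since $E[2](\QQ) \neq 0$ reduces injectively (good reduction at $2$, and $2$-torsion is prime-to-$2$... no — rather, the reduction map on $E(\QQ)_{\tors}$ has $2$-power kernel, and a nontrivial rational $2$-torsion point maps to a nontrivial point of $E(\FF_2)$ precisely when it is not in the kernel; here one must argue it is not). The cleanest route: the reduction map $E(\QQ_2)_{\tors} \to E(\FF_2)$ is injective on the prime-to-$p$ part for $p = 2$ it is injective on odd torsion, and its kernel on $2$-torsion is contained in the formal group, which is torsion-free over $\ZZ_2$ — hence $E(\QQ_2)[2] \hookrightarrow E(\FF_2)[2]$, and likewise $E(\QQ_2)[4] \hookrightarrow E(\FF_2)$ as long as $E$ has good \emph{ordinary} or the relevant reduction. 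Actually the correct general statement is: for $E/\QQ_2$ with good reduction, the reduction map is injective on $E(\QQ_2)_{\tors}$ except possibly on the $2$-primary part, where the kernel injects into the formal group $\widehat{E}(2\ZZ_2) \cong \ZZ_2$, which has no torsion; therefore reduction is injective on all of $E(\QQ_2)_{\tors}$. This forces $\ZZ/2 \times \ZZ/4 \hookrightarrow E(\FF_2)$, which is impossible by Hasse. So the lemma as literally stated can only be nonvacuous if "good reduction at $2$" is used together with the hypothesis that $E$ is defined over $\QQ$ with $E_{\tors} = \ZZ/2\times\ZZ/4$ — meaning the situation $\ZZ/2\times\ZZ/4$ with good reduction at $2$ simply does not occur, OR one must work over $\ZZ[1/N]$ where $2 \nmid N$ fails. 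I would recheck: the hypothesis is $Q \in E(\ZZ[1/N])$, so $2 \mid N$ is allowed and "good reduction at $2$" then forces $2 \nmid N$ after all, giving the contradiction — so the only way out is that $\#E(\FF_2) = 4$ or $5$ and the reduction is an isomorphism onto the $4$-element subgroup.

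Granting $\#E(\FF_2) \geq 4$, the argument is then immediate: reduction $E(\QQ)_{\tors} \to E(\FF_2)$ is injective (by the formal-group / good-reduction argument above), so $\overline{Q}$ has the same order as $Q$, namely $4$. The substantive step, and the main obstacle, is establishing injectivity of reduction at the prime $2$ on the $2$-primary torsion: this is false in general (the kernel can be $\ZZ/2\ZZ$ when $e \geq 2$ ramification or when reduction is additive), and holds here only because $E$ has good reduction at $2$ and $2$ is unramified in $\QQ$, so the formal group $\widehat{E}(2\ZZ_2)$ is uniquely divisible by no prime but has $\widehat{E}(2\ZZ_2) \cong (2\ZZ_2,+)$ which is torsion-free. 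I would cite the standard reference (Silverman, \emph{Arithmetic of Elliptic Curves}, VII.3.1 and the torsion-freeness of the formal group over $\ZZ_2$ coming from VII.6.3 or VII.2.2, using that the residue characteristic $p = 2$ satisfies $v(p)/(p-1) = 1/1 < 1/(p-1)$... here $v(2) = 1$ and $1/(p-1) = 1$ so one needs the refined statement $v(p) < \text{something}$; the relevant fact is $\widehat{E}(2\ZZ_2)$ has no $2$-torsion since $[2]$ acts on it injectively as $2 + (\text{higher order})$ and $2\ZZ_2$ is $2$-adically separated). Once injectivity is in hand, the lemma follows in one line.
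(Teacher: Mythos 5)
Your proposal hinges on the claim that reduction modulo $2$ is injective on $E(\QQ_2)_{\tors}$ because the formal group $\widehat{E}(2\ZZ_2)$ is torsion-free. That claim is false at $p=2$, and it is false precisely in the situation of this lemma. The torsion-freeness criterion requires $v(p) < p-1$, and for $p=2$ one has $v(2)=1=p-1$, so a point of exact order $2$ in $\widehat{E}(2\ZZ_2)$ is allowed (it need only have formal parameter of valuation $\leq v(p)/(p^n-p^{n-1})=1$). Your fallback argument that $[2]$ acts injectively because $[2](z)=2z+(\text{higher order})$ also fails: for $v(z)=1$ the quadratic and higher terms have the same valuation as $2z$, so cancellation can and does occur. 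In fact your own computation exposed the problem: since $\#E(\FF_2)\leq 5$ by Hasse, the order-$8$ group $\ZZ/2\times\ZZ/4$ cannot inject into $E(\FF_2)$, so if your injectivity claim were correct the hypotheses would be contradictory. They are not: curves such as those of conductor $15$ and $21$ (exactly the curves treated in the proposition following this lemma) are semistable, have good reduction at $2$, and have torsion $\ZZ/2\times\ZZ/4$. The correct conclusion from your Hasse computation is that the reduction map has a kernel of order $2$ on the torsion, not that the situation ``does not occur.''

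The genuine content of the lemma, which your proposal never addresses, is to identify that kernel and show $[2]Q$ does not lie in it. The paper does this by an explicit computation: write $E$ in minimal form $y^2+xy=x^3+a_2x^2+a_4x$, so the three $2$-torsion points have $x$-coordinates $0$, $4\alpha$, $\beta/4$ with $\alpha,\beta$ odd and coprime (by semistability); the point with $x=\beta/4$, namely $\bigl(\tfrac{\beta}{4},-\tfrac{\beta}{8}\bigr)$, is exactly the one lying in the formal group and reducing to the identity. One then shows via the duplication formula that $x([2]Q)=\beta/4$ would force $2\beta(8\alpha+\beta)$ to be a perfect square, which is impossible since $\alpha$ and $\beta$ are odd. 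Hence $x([2]Q)\in\{0,4\alpha\}$, so $[2]\overline{Q}\neq O$ in $E(\FF_2)$ and $\overline{Q}$ has order $4$. Without some argument of this kind ruling out $[2]Q$ landing in the kernel of reduction, your proof does not go through.
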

  \begin{proof}
    We can check that an elliptic curve $E$ with good reduction at $2$ and
    a rational $2$-torsion point has a minimal model 
    \[ E: y^2+xy=x^3+a_2x^2+a_4x.\]
    Since $E[2] =\ZZ/2 \times \ZZ/2$,
    $(4a_2+1)^2-64a_4$ is a perfect square.
    The $x$ coordinates of the $2$-torsion points are $0$, $4\alpha$, and 
    ${\beta \over 4},$ were
    $\alpha$ and $\beta$ are 
    both (odd) integers since we are assuming that $E$ is in minimal model. 
    Furthermore, since $E$ is assumed to be semistable,
    $\alpha$ and $\beta$ are coprime to each other.
    Note that the point $({\beta \over 4},-{\beta\over 8}) \in E(\QQ)$ maps to 
    the identity under the reduction mod $2$ map.
    Using the notation from \cite{AECI}, we have 
    \begin{eqnarray*}
        b_2&=& 16 \alpha+\beta, \\
        b_4&=& 2\alpha \beta, \\
        b_6&=& 0, \\
        b_8&=& -\alpha^2 \beta^2,\\
        \Delta&=& \alpha^2 \beta^2 (16\alpha-\beta)^2.
    \end{eqnarray*}
    Let $Q \in E(\QQ)$ be a point of order $4$, and let $x(Q)=x_0$.
    Recall that we want to show $\overline{Q} \in E(\FF_2)$ is a point of
    order $4$.
    We have that $x([2]Q)=0$, $4\alpha$, or ${\beta \over 4}.$
    If $\overline{Q}$ has order less than $4$, then $2\overline{Q}$ must be
    the identity element, that implies that $x([2]Q)={\beta \over 4}$.
    In that case
    \begin{eqnarray*}
        {\beta \over 4} &=& {x_0^4-b_4x_0^2-b_8 \over 4x_0^3+b_2x_0^2+2b_4x_0} \\
        &=& {x_0^4-2\alpha \beta x_0^2 + \alpha^2 \beta^2 \over 4x_0^3+(16 \alpha+\beta)x_0^2+4\alpha \beta x_0}, \\
        \Rightarrow 0 &=&  x_0^4-\beta x_0^3-(6\alpha \beta+{\beta^2 \over 4})x_0^2-\alpha \beta^2 x_0 + \alpha^2 \beta ^2 \\
        &=& (x_0^2-{\beta \over 2} x_0 + \alpha \beta)^2 - (4\alpha \beta+{\beta^2 \over 2})x_0^2.
    \end{eqnarray*}
    Therefore, $16\alpha \beta+2\beta^2=2\beta(8\alpha+\beta)$ must be 
    a perfect square; however that is
    not possible because $\alpha$ and $\beta$ are odd. As a result 
    $x([2]Q)=0$ or $4 \alpha$. Therefore, $[2]\overline{Q}$ has order $2$
    in $E(\FF_2)$. This shows that $\overline{Q}$ has order $4$, which is the desired result.
  \end{proof}

  \begin{prop}
	Let $E$ be an elliptic curve with conductor $pq$ and $E_{\tors}=\ZZ/2 \times \ZZ/4$.
	Then, $pq=15$ or $21$.
	\label{Technical2}
  \end{prop}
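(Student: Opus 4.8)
The plan is to exploit the explicit minimal model that the proof of Lemma \ref{technical3} attaches to a semistable curve with good reduction at $2$ and full rational $2$-torsion, and then read off the possibilities for $N$ from the factorization of the discriminant together with the classical descent criterion for divisibility of a rational $2$-torsion point by $2$. Since $N=pq$ is odd and squarefree, $E$ is semistable with good reduction at $2$, and $E_{\tors}=\ZZ/2\times\ZZ/4$ forces $E[2](\QQ)=(\ZZ/2\ZZ)^2$; so by the proof of Lemma \ref{technical3} we may take $E:y^2+xy=x^3+a_2x^2+a_4x$ with the three rational $2$-torsion points having $x$-coordinates $0$, $4\alpha$, $\beta/4$, where $\alpha,\beta$ are odd coprime integers, the numbers $\alpha$, $\beta$, $16\alpha-\beta$ are pairwise coprime, and $\Delta=\bigl(\alpha\beta(16\alpha-\beta)\bigr)^2$. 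As this model is minimal and $E$ is semistable of conductor $pq$, we have $\operatorname{rad}\bigl(\alpha\beta(16\alpha-\beta)\bigr)=\operatorname{rad}(\Delta)=pq$.

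Next I would determine which $2$-torsion point is divisible by $2$. Since $E_{\tors}$ contains a rational point $Q$ of order $4$, the proof of Lemma \ref{technical3} shows $\overline Q$ has order $4$ in $E(\FF_2)$; because $(\beta/4,-\beta/8)$ is the $2$-torsion point reducing to $O$ modulo $2$, the point $2Q$ must be the $2$-torsion point with $x$-coordinate $0$ or $4\alpha$. Completing the square via $Y=y+x/2$ rewrites $E$ as $Y^2=x(x-4\alpha)(x-\beta/4)$, and the standard criterion says that a rational $2$-torsion point $(e_i,0)$ lies in $2E(\QQ)$ precisely when $e_i-e_j$ is a square in $\QQ^\times$ for both of the other roots $e_j$. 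Hence either $-\alpha$ and $-\beta$ are perfect squares, say $\alpha=-s^2$ and $\beta=-t^2$, or else $\alpha$ and $16\alpha-\beta$ are perfect squares, say $\alpha=u^2$ and $16\alpha-\beta=v^2$, with $s,t$ (respectively $u,v$) odd and coprime.

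Finally, substituting back into $\operatorname{rad}\bigl(\alpha\beta(16\alpha-\beta)\bigr)=pq$ gives $\alpha\beta(16\alpha-\beta)=s^2t^2(t-4s)(t+4s)$ in the first case and $\alpha\beta(16\alpha-\beta)=u^2v^2(4u-v)(4u+v)$ in the second. In either case the four integer factors $s,t,t-4s,t+4s$ (respectively $u,v,4u-v,4u+v$) are pairwise coprime, so since their product has radical $pq$ with exactly two prime divisors, at least two of them must equal $\pm1$ while the remaining two are a power of $p$ and a power of $q$. Running through the handful of resulting subcases — discarding those that force a zero or non-integral parameter, hence a degenerate curve — one finds that the only surviving possibilities have $\{p,q\}=\{3,5\}$ or $\{3,7\}$, that is $pq=15$ or $pq=21$.

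I expect the only genuine subtlety to be extracting the right consequence of Lemma \ref{technical3}, namely that the $2$-divisible $2$-torsion point lies at $x=0$ or $x=4\alpha$ rather than at $x=\beta/4$; after that the proof is just a finite $2$-descent combined with an elementary coprimality count, with nothing deeper to overcome — one only has to keep track of signs in the final enumeration.
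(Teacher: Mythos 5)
Your argument is correct and is essentially the paper's own proof: the same minimal model and coprime odd parameters $\alpha,\beta$ from Lemma \ref{technical3}, the same use of that lemma to place $x([2]Q)$ at $0$ or $4\alpha$, and the same endgame of writing $\Delta=\bigl(\alpha\beta(16\alpha-\beta)\bigr)^2$ as a product of pairwise coprime factors supported on $\{p,q\}$ and enumerating which of them can exceed $1$ in absolute value. The only (harmless) divergence is in how the square conditions are extracted: the paper reduces to $x([2]Q)=0$ by a coordinate change and reads off $x(Q)^2=\alpha\beta$ from the duplication formula, so $\alpha,\beta$ are squares up to a common sign, whereas you keep both possible positions of $2Q$ and invoke the standard criterion for a rational $2$-torsion point to lie in $2E(\QQ)$, obtaining the sharper alternatives ($-\alpha,-\beta$ squares) or ($\alpha,16\alpha-\beta$ squares); either version feeds the same finite coprimality check and yields only $pq=15$ and $21$.
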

  \begin{proof}
    Using the same notation as in lemma \ref{technical3},
    let $0$, $4\alpha$ and $\beta\over 4$ be the $x$-coordinates of the
    $2$-torsion points of $E$.
    Let $Q$ be a point in $E_{\tors}$ of order $4$.
    By lemma \ref{technical3}, $x([2]Q)=0$ or $4\alpha$.
    Without loss of generality, assume that $x([2]Q)=0$, since
	if $x([2]Q)=4\alpha$, then we can change the coordinates to find another model
	with $x([2]Q')=0$. 
    Let $x_0=x(Q)$.
    Then $x_0^4-2\alpha \beta x_0^2+\alpha^2\beta^2=0$,
    which implies that $x_0^2=\alpha\beta$. Since $\alpha$ and $\beta$ are coprime,
    they are both perfect squares, or negative of perfect squares (both of
    the same sign).
    Since $E$ is of conductor $pq$, 
    $\Delta=\alpha^2 \beta^2(16 \alpha-\beta)^2$ is a product of the powers of $p$ and $q$.
    Let $a^2=\pm \alpha$ and $b^2=\pm \beta$. 
    Then,
    $a^4 b^4(4a-b)(4a+b)$ is a product of the powers of $p$ and $q$. Note that 
    $(4a-b,4a+b)=1$, which implies that all factors are pairwise coprime.
    Note that if 
    $|4a+b|=|4a-b|=1$, then
    either $a=0$ or $b=0$ contrary to our assumptions. Therefore we will
    assume without loss of generality that $4a+b>1$.

    If $4a-b \neq \pm 1$, then $a^2=b^2=1$, which means $E$ is the
    elliptic curve $15A$.
    If $4a-b=\pm 1$ then $|b|>1$, therefore $|a|=1$. Since we are assuming
    that $4a+b>1$ we get that $a=1$, and $4a-b=1$ leads to elliptic curve
    $21A$ and $4a-b=-1$ leads to elliptic curve $15A$.
	This completes our proof.
  \end{proof}
  \begin{remark}
    Note that the previous proposition seems a bit tedious.
    It is straightforward to show that $3$ must divide the conductor by 
    the Hasse-Weil
	bound. 
    Unfortunately, it is not clear how this observation can simplify the argument.
  \end{remark}
  An immediate corollary of the above is that for an elliptic curve $E$
  of conductor $pq$ and ordinary reduction at $2$, we have $E_\tors=(\ZZ/2\ZZ)^2$, since the only 
  other option is $E_\tors=\ZZ/2\ZZ\times \ZZ/6\ZZ$. However the Hasse-Weil
  bounds for elliptic curves rules this case out.

  \begin{thm}
	Assume that $E$ is an elliptic curve with an odd modular degree. Furthermore, assume
	that the conductor of $E$ is $pq$ with $pq\neq 21$ or $15$. Then $p, q \equiv 3 \pmod 8$.
  \end{thm}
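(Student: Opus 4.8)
The plan is to combine the congruences already in hand with the explicit minimal Weierstrass model forced by the full rational $2$-torsion of $E$. Since $pq$ is odd, an odd modular degree is the same as an odd congruence number (the remark after Theorem~\ref{moddegcong}), so Theorem~\ref{modabclassification} applies: with the two primes labelled so that $(w_p)_*=-1$ and $(w_q)_*=+1$ on $E$, it gives $p\equiv\pm3\pmod{8}$ and $q\equiv3\pmod{4}$. It therefore remains to exclude $p\equiv5\pmod{8}$ and to sharpen $q\equiv3\pmod{4}$ to $q\equiv3\pmod{8}$; equivalently, to prove $v_2(p-1)=1$ and $v_2(q+1)=2$. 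Since $N=pq$ is odd, $E$ has good reduction at $2$. By Corollary~\ref{twotorsionstructure} we have $E[2](\QQ)=(\ZZ/2\ZZ)^2$, and by Proposition~\ref{Technical2} together with the Hasse--Weil bound (which dispose of the torsion subgroups $\ZZ/2\ZZ\times\ZZ/4\ZZ$ and $\ZZ/2\ZZ\times\ZZ/8\ZZ$, whose only outcome is $pq\in\{15,21\}$, as well as $\ZZ/2\ZZ\times\ZZ/6\ZZ$) we may assume $E_{\tors}=(\ZZ/2\ZZ)^2$. Hence, by the computation in Lemma~\ref{technical3}, $E$ has the minimal model $y^2+xy=x^3+a_2x^2+a_4x$ with $2$-torsion $x$-coordinates $0$, $4\alpha$, $\tfrac{\beta}{4}$, where $\alpha$ and $\beta$ are odd and coprime, $\beta$ lies in a fixed residue class modulo $4$ (forced by $a_2\in\ZZ$), $a_4=\alpha\beta$, and $\Delta=\alpha^2\beta^2(16\alpha-\beta)^2$.

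Next I would observe that $\alpha$, $\beta$ and $16\alpha-\beta$ are pairwise coprime odd integers and that $c_4=256\alpha^2-16\alpha\beta+\beta^2$ and $c_6=-(16\alpha+\beta)(8\alpha-\beta)(32\alpha-\beta)$ are each coprime to $\alpha\beta(16\alpha-\beta)$. It follows that this model is minimal, that $E$ has multiplicative reduction at each prime dividing $N$, and that the radical of $\alpha\beta(16\alpha-\beta)$ equals $pq$. By pairwise coprimality, exactly one of $\alpha$, $\beta$, $16\alpha-\beta$ is $\pm1$ while the other two are, up to sign, a power of $p$ and a power of $q$; the only alternative, that two of the three equal $\pm1$, forces $pq=15$. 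Substituting into the linear relation among $\alpha$, $\beta$, $16\alpha-\beta$ then converts the conductor hypothesis into one of the $S$-unit equations $\pm p^a\pm q^b=\pm16$ or $\pm16\,p^a\pm q^b=\pm1$ (with $\{p,q\}$ in some order), where $a,b\geq1$.

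Finally I would invoke the reduction types: $(w_p)_*=-1$ forces split multiplicative reduction at $p$ and $(w_q)_*=+1$ forces nonsplit multiplicative reduction at $q$; using the classical criterion (see \cite{AECI}) that $E$ is split multiplicative at an odd prime $\ell\mid\Delta$ exactly when $\left(\tfrac{-c_6}{\ell}\right)=1$, together with the factorization of $c_6$ above, this determines which of $\alpha$, $\beta$, $16\alpha-\beta$ is divisible by $p$ and which by $q$ (and the intervening signs) in terms of $p\bmod 8$, $q\bmod 4$, $\beta\bmod 4$ and quadratic reciprocity. A short $2$-adic analysis of the resulting $S$-unit equation modulo $16$, under these constraints, then forces $v_2(p-1)=1$ and $v_2(q+1)=2$, that is $p\equiv q\equiv3\pmod{8}$, the finitely many degenerate solutions corresponding precisely to the curves $15A$ and $21A$. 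I expect the crux to be exactly this last step: one must run through the several branches (position of the $\pm1$ factor, the sign choices) and verify in each that the split/nonsplit conditions are compatible with the mod-$16$ congruences only when $p\equiv q\equiv3\pmod{8}$, while confirming that the excluded cases are precisely $pq\in\{15,21\}$. A secondary point to watch is the factor-of-two ambiguity in the cuspidal order computations noted after Proposition~\ref{ligozat2}, which must not weaken the input $E[2](\QQ)=(\ZZ/2\ZZ)^2$ coming from Corollary~\ref{twotorsionstructure}.
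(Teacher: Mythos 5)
Your reduction to the Weierstrass model cannot close the argument, because the hypotheses you plan to feed into the final ``$2$-adic analysis'' are strictly weaker than the odd-degree hypothesis and do not imply the conclusion. Concretely, consider the curve $y^2+xy=x^3-5x^2+3x$ (take $\alpha=1$, $\beta=3$ in the parametrization of Lemma~\ref{technical3}, so the $2$-torsion $x$-coordinates are $0,4,\tfrac34$). It has $c_4=217$, $\Delta=39^2$, hence conductor $39=3\cdot 13$ with multiplicative reduction at $3$ and $13$, and full rational $2$-torsion with $E_{\tors}=(\ZZ/2\ZZ)^2$. Computing $c_6=2755$, one finds $-c_6\equiv 2\pmod 3$ (nonsquare) and $-c_6\equiv 1\pmod{13}$ (square), so the reduction is split at $13$ and nonsplit at $3$; in your labelling $p=13$, $q=3$, i.e.\ $(w_{13})_*=-1$, $(w_3)_*=+1$, $p\equiv -3\pmod 8$ and $q\equiv 3\pmod 4$. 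Thus every constraint you propose to use --- full $2$-torsion, conductor $pq$, the split/nonsplit pattern dictated by the Atkin--Lehner signs, and the congruences of Theorem~\ref{modabclassification} --- is satisfied, yet $p=13\equiv 5\pmod 8$. (This curve of course has even modular degree, which is exactly what the theorem predicts, but your list of hypotheses cannot see that.) So no case analysis of the $S$-unit equations modulo $16$, however carefully the signs and residue symbols are tracked, can force $p\equiv q\equiv 3\pmod 8$ from those inputs alone: the oddness of the congruence number must be re-used at this stage, not only through Theorem~\ref{modabclassification}.

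That re-use is precisely what the paper's proof does, and it is the idea missing from your plan. Since $E_{\tors}=E[2]$, one shows $\pi(\tau)-\pi(w_p(\tau))=\pi(P_p)$ for every cusp $\tau$, hence the cuspidal element $v=(1+w_q)(1-w_p)P_1$ satisfies $\phi(v)=2\pi(P_p)=0$, so $v$ lies in $\ker(\phi)\cap C_N$; as $v$ is of the shape handled by Proposition~\ref{newness}, an even order for $v$ would produce a class in $C_N[\mm]\cap B$ and force an even congruence number. Oddness of the congruence number therefore forces the order $\num\bigl((p-1)(q+1)/24\bigr)$ of $v$ to be odd, and combined with $q\equiv 3\pmod 4$ this yields $p\equiv 3\pmod 8$ and then $q\equiv 3\pmod 8$ (for the curve above this order is $2$, which is exactly how it is excluded). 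Your opening steps (Corollary~\ref{twotorsionstructure}, Proposition~\ref{Technical2} and the Hasse--Weil bound to get $E_{\tors}=(\ZZ/2\ZZ)^2$) do match the paper, and the Diophantine analysis you sketch is essentially the paper's Corollary following the theorem, where it is carried out \emph{after} the congruences are known. A secondary slip: the paper's displayed values $b_2=16\alpha+\beta$, etc., are inconsistent by a sign with $2$-torsion abscissae $4\alpha,\beta/4$; with that normalization one has $-c_6=-(16\alpha+\beta)(8\alpha-\beta)(32\alpha-\beta)$, so the quadratic-residue criteria you state would also need correcting --- but the substantive obstacle is the counterexample above, not the bookkeeping.
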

  \begin{proof}
    Note that by corollary \ref{twotorsionstructure} we know that 
    $E[2](\FF_2)$ is non-trivial, hence $E$ has good ordinary reduction
    at $2$.
    Therefore, for $pq \neq 21$ and $15$ we have $E(\QQ)_\tors=(\ZZ/2\ZZ)^2.$
    Recall that we are assuming $(w_p)_*=-1$ and $(w_q)_*=1$ on $E$.
    Note that
    \begin{eqnarray*}
        \pi(\tau)-\pi(w_p(\tau))&=&\phi(\tau-P_1) -\phi(w_p(P_1)-P_1) -\phi(w_p(\tau)-w_p(P_1)) \\
        &=& \pi(\tau)-(w_p)_*(\pi(\tau))-\pi(w_p(P_1)) \\
        &=& 2\pi(\tau)-\pi(P_p),
    \end{eqnarray*}
    for any $\tau \in X_0(N)$.
    When $\tau$ is a cusp of $X_0(N)$, $\pi(\tau)$ is a torsion point, 
    and since $E_{\tors}=E[2]$ we get $2\pi(\tau)=0$. Therefore
    \[ \pi(\tau)-\pi(w_p(\tau))=\pi(P_p).\]
    Let $v=(1+w_q)(1-w_p)P_1$. 
    Then
    \[\phi(z)=(\pi(P_1)-\pi(w_p(P_1)))+(\pi(P_q)-\pi(w_q(P_q))) = 2\pi(P_p)=0.\]
    As a result $v \in B \cap C_N$.
    Also, since $v$ is of the form that is considered in proposition
    \ref{newness}, if $v$ has even order then $E$ will have even congruence 
    number. Since we are assuming that $E$ has an odd congruence number, 
    $v$ must have an odd order.
    The order of this point
    is $\num((q+1)(p-1)/24).$ Since $q \equiv 3 \pmod 4$,
    $4 | q+1$. If $p \equiv -3 \pmod 8$, then
    $v$ will have an even order, and $E$ will have an even congruence number. 
    Therefore $p \equiv 3 \pmod 8$, 
    and $2 || p+1$. If $q \equiv -1 \pmod 8$, again $v$ will have
    an even order. Therefore, $q \equiv 3 \pmod 8$, which is the desired result.
  \end{proof}
  We also get the following corollary.
  \begin{cor}
    Assume that $E$ is an elliptic curve with an odd congruence number
    and the conductor $pq$
    with $pq \neq 15$ or $21$. Then there exist odd integers $r$ and $s$ such
    that $|p^r-q^s|=16$.
  \end{cor}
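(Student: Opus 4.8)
The plan is to read the desired identity off the minimal Weierstrass model already introduced in Lemma~\ref{technical3} and Proposition~\ref{Technical2}. Since $4\nmid pq$, having an odd congruence number is equivalent to having an odd modular degree (Theorem~\ref{moddegcong}), so the preceding theorem applies and gives $p\equiv q\equiv 3\pmod 8$. Moreover $N=pq$ is odd, so $E$ has good reduction at $2$, and by Corollary~\ref{twotorsionstructure} it has a rational $2$-torsion point; hence $E$ admits a minimal model $y^{2}+xy=x^{3}+a_{2}x^{2}+a_{4}x$ whose $2$-torsion has $x$-coordinates $0$, $4\alpha$, $\beta/4$ with $\alpha,\beta$ coprime odd integers, and with $\Delta=\alpha^{2}\beta^{2}(16\alpha-\beta)^{2}$.

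Next I would use that $N=pq$ is squarefree and prime to $2$: then $\Delta$ is a power of $p$ times a power of $q$, and being a perfect square, $|\alpha\beta(16\alpha-\beta)|=p^{a}q^{b}$ with $a,b\ge 1$. The three integers $\alpha$, $\beta$, $16\alpha-\beta$ are pairwise coprime (because $\beta$ is odd and $\gcd(\alpha,\beta)=1$), so each of $|\alpha|$, $|\beta|$, $|16\alpha-\beta|$ is $1$, a power of $p$, or a power of $q$; since there are three of them while both $p$ and $q$ must occur, at least one of them equals $1$.

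The crux is to show that it must be $|\alpha|=1$. Suppose instead that $|\beta|=1$ or $|16\alpha-\beta|=1$. Then the remaining two of $|\alpha|$, $|\beta|$, $|16\alpha-\beta|$ are $p^{u}$ and $q^{v}$, and comparing $16\alpha$ with $\beta$ and with $16\alpha-\beta$ produces, after possibly interchanging $p$ and $q$, an equation $q^{v}=16p^{u}\pm 1$. Reducing modulo $8$ forces $q^{v}\equiv\pm1\pmod 8$, hence (as $q\equiv3\pmod8$) $v$ is even, so $q^{v}=t^{2}$ with $t$ odd; the branch $t^{2}+1=16p^{u}$ is then excluded modulo $8$, and $t^{2}-1=(t-1)(t+1)=16p^{u}$ reduces, on writing $t-1=2c$, to $c(c+1)=4p^{u}$, which forces $(p^{u},q^{v})\in\{(3,49),(5,81)\}$ and hence $5$ or $7$ divides $pq$, contradicting $p\equiv q\equiv3\pmod8$. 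The degenerate cases in which two of the three integers are $\pm1$ are disposed of the same way: they force $5\mid pq$, or $pq$ to be a prime power, or $\alpha\notin\ZZ$. Therefore $|\alpha|=1$, so $\{|\beta|,|16\alpha-\beta|\}=\{p^{r},q^{s}\}$, and from $16\alpha=\beta+(16\alpha-\beta)$ we obtain $\lvert{\pm}p^{r}{\pm}q^{s}\rvert=16$. The same-sign possibility $p^{r}+q^{s}=16$ is impossible modulo $8$, so $\lvert p^{r}-q^{s}\rvert=16$; and $p^{r}\equiv q^{s}\pmod8$ then forces $r$ and $s$ to have the same parity. If both were even, $p^{r}$ and $q^{s}$ would be odd squares differing by $16$, whence $\{p^{r/2},q^{s/2}\}=\{3,5\}$ and $5\mid pq$, again excluded. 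Hence $r$ and $s$ are odd, which is what we wanted.

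I expect the sole genuine obstacle to be the elementary Diophantine bookkeeping in the third paragraph: one must verify carefully that, once $p\equiv q\equiv3\pmod 8$ is imposed, none of $q^{v}=16p^{u}\pm1$, $p^{r}+q^{s}=16$, or ``$\lvert p^{r}-q^{s}\rvert=16$ with $r,s$ even'' has a solution. Everything else is a direct reading of the discriminant formula set up in Section~\ref{twotozero}.
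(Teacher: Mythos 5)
Your proposal is correct and follows essentially the same route as the paper: read the factorization $\Delta=\alpha^{2}\beta^{2}(16\alpha-\beta)^{2}$ off the model of Lemma~\ref{technical3}, rule out $|\alpha|>1$ via the Diophantine equation $q^{v}=16p^{u}\pm1$ using $p\equiv q\equiv 3\pmod 8$, dispose of the cases $|\beta|=1$ or $|16\alpha-\beta|=1$ (the $15$/$17$ degeneracies), and finally use the congruence mod $8$ to force $r,s$ odd. If anything, your write-up is slightly more careful than the paper's (you keep the $(p^{u},q^{v})=(3,49)$ solution and the double-$\pm1$ degeneracies explicit), but the argument is the same.
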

  \begin{proof}
    \label{diffsixteen}
    Following the notation of lemma \ref{technical3}, we have
    $\Delta=\alpha^2 \beta^2(16 \alpha-\beta)^2$ for some odd integers
    $\alpha$ and $\beta$, coprime to each other. Assume that $\alpha^2 \neq 1$,
    then $|\alpha|=p^r$, $q^s$, or $p^rq^s$. In the last case,
    $\beta^2=(16\alpha-\beta)^2=1$, which is not possible. Therefore 
    assume without
    loss of generality that $\alpha=\pm p^r$. If $\beta=\pm q^s$,
    $16 \alpha-\beta = \pm 1$, which leads to the Diophantine equation
    $\pm 16 p^r- \pm q^s = \pm 1$. We get the same Diophantine equation
    if $\beta=\pm 1$. Therefore, we
    need to solve the Diophantine equation
    \[ q^s-16 p^r=\pm 1.\]
    Since $q^s \equiv 3 \pmod 8$ for all odd $s$'s, and $q^s \equiv 1 \pmod {16}$
    for all even $s$'s, $s$ must be even and
    \[ q^s-16 p^r =1. \]
    This leads to $(q^{s/2}-1)(q^{s/2}+1)=16p^r$, and since $(q^{s/2}-1,q^{s/2}+1)=2$,
    $q^{s/2}=7$ or $9$. Therefore $q^s=81$, which forces $p=5.$ This is not
    congruent to $3 \pmod 8$, so we get that $\alpha = \pm 1$.

    If $\beta^2=1$, then $|\pm 16-\beta|$ is $15$ or $17$, which again contradicts 
    $p, q \equiv 3 \pmod 8$. We get the same result if $(\pm 16-\beta)^2=1$. Therefore,
    $\beta=\pm p^r$ and $\pm 16-\beta=\pm q^s$. This leads to the Diophantine equation
    $|p^r-q^s|=16$.
    Since $p,q \equiv 3 \pmod{8}$, $r \equiv s \pmod 2$. If they are both even,
    then the difference of the two squares equals $16$, which forces $N=15$.
    Therefore, $r$ and $s$ are odd, which is the desired result.
    Finally note that in this case the elliptic curve has the model
    \[ E: y^2+xy=x^3+{15+ p^r \over 4}x^2 +p^r x. \]
  \end{proof}

  We also have the following
  \begin{thm}
      Let $E$ be an elliptic curve with conductor $pq$ and an
      odd congruence number. Then $L(E,1) \neq 0$, hence $E$ has rank $0$.
  \end{thm}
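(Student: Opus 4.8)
The plan is to reduce the claim to showing that the point $\pi(P_N)$, the image under the modular parametrization of the cusp $0$, is a nonzero rational torsion point of $E$. Recall from section~\ref{PrimeLevel} that $L(E,1)=2\pi i\int_0^{i\infty}f_E(z)\,dz\equiv\pi(P_N)\pmod{\Lambda_E}$, where $E(\CC)\cong\CC/\Lambda_E$; hence $L(E,1)=0$ would force $\pi(P_N)=0$ in $E(\QQ)$. Only this direction of the implication is needed, and the Manin constant is harmless since it merely rescales a period while we are only deciding whether a $2$-torsion point vanishes. Granting $L(E,1)\neq 0$, the statement that $E$ has rank $0$ is the theorem of Kolyvagin and Gross--Zagier, \cite{Kol} and \cite{GrZa}.

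First I would dispose of the two sporadic conductors: if $pq=15$ or $21$ then $E$ lies in the isogeny class $15A$ or $21A$, which have rank $0$ by direct verification, so assume henceforth $pq\neq 15,21$. By proposition~\ref{Technical2} together with corollary~\ref{twotorsionstructure} we then have $E(\QQ)_{\tors}=E[2]=(\ZZ/2\ZZ)^2$, so by Manin--Drinfeld the $\phi$-image of every degree-zero cuspidal divisor lies in $E[2]$. As in section~\ref{twotozero} we may assume $(w_p)_*=-1$ on $E$, and then $(w_q)_*=+1$ since $(w_{pq})_*=(w_p)_*(w_q)_*=-1$ by lemma~\ref{AtkinLehnerSign}.

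Next I would pin down $\pi(P_N)$. Since $(w_q)_*$ acts trivially on $E$, the computation in the proof of lemma~\ref{twotorsionlemma} gives $\pi(w_q(\tau))=\pi(\tau)+\pi(w_q(P_1))$ for every $\tau\in X_0(N)$; applying this with $\tau=P_p$ and using $w_q(P_p)=P_{pq}=P_N$ and $w_q(P_1)=P_q$ yields $\pi(P_N)=\pi(P_p)+\pi(P_q)$. It therefore suffices to show $\phi(P_1-P_p)+\phi(P_1-P_q)\neq 0$ in $E[2]$.

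For this I would invoke corollary~\ref{twotorsionstructure} and its proof, which produces the order-two elements $u=\frac{(p-1)(q^2-1)}{48}(P_1-P_p)$ and $u'=\frac{(p^2-1)(q-1)}{48}(P_1-P_q)$ of $C_N[\mm]$, with $\mm$ the annihilator of a rational $2$-torsion point as in lemma~\ref{LemRib2}, satisfying $u+u'\neq 0$ in $C_N[\mm]$. Since $E$ has odd congruence number, $C_N[\mm]$ injects into $E$ via $\phi$; in particular $\phi(u)\neq 0$, and as $\phi(u)$ is an integer multiple of the $2$-torsion point $\phi(P_1-P_p)$, that multiple must be odd, so $\phi(u)=\phi(P_1-P_p)$, and likewise $\phi(u')=\phi(P_1-P_q)$. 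Hence $\phi(P_1-P_p)+\phi(P_1-P_q)=\phi(u+u')\neq 0$, because $0\neq u+u'\in C_N[\mm]\hookrightarrow E$, and therefore $\pi(P_N)\neq 0$. The step I expect to need the most care is the passage from the analytic vanishing $L(E,1)=0$ to the algebraic vanishing $\pi(P_N)=0$ --- one must use exactly that direction and check that the period normalization does not obstruct it --- while the second point to watch is the re-entry of the odd-congruence-number hypothesis through the parity of the ``halving'' coefficients above.
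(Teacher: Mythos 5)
Your proposal is correct and takes essentially the same route as the paper: it reduces $L(E,1)\neq 0$ to the nonvanishing of $\pi(P_{pq})$ and obtains this from the order-two cuspidal classes $u,u'$ of corollary \ref{twotorsionstructure} together with the injectivity of $C_N[\mm]\hookrightarrow E$ forced by the odd congruence number, finishing with Kolyvagin and Gross--Zagier. The only (harmless) differences are that you relate $\pi(P_{pq})$ to $\pi(P_p)+\pi(P_q)$ via the $w_q$-translation identity rather than applying $w_p$ to $P_p-P_q$ as the paper does, and you deduce the oddness of the halving coefficients directly from injectivity and $E_{\tors}=E[2]$ instead of from the congruences $p,q\equiv 3\pmod 8$.
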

  \begin{proof}
      For $pq=15$ or $21$ we can check that $E$ has Mordell-Weil rank $0$.
      Therefore assume that $pq \neq 15$ or $21$.
      Recall that in proposition \ref{twotorsionstructure} we showed that
      \[ u={(p-1)(q^2-1) \over 48}(P_1-P_p), u'={(q-1)(p^2-1) \over 48}(P_1-P_q),\]
      have order two, and $\phi(u)$ and $\phi(u')$ are linearly independent, hence
      they generate $E[2]$.
      However, since $p,q \equiv 3 \pmod 8$ we get that $u$ and $u'$ are odd
      multiples of $P_1-P_p$ and $P_1-P_q$, respectively. So $\pi(P_p)$ and $\pi(P_q)$
      also generate $E[2]$. Therefore, $\phi(P_p-P_q)$ is nontrivial. Applying the 
      Atkin-Lehner involution $w_p$ to $P_p-P_q$, 
      we get that $\phi(P_1-P_{pq})$ is nontrivial.
      Therefore, $\pi(P_{pq}) \neq 0$, which implies that $L(E,1) \neq 0$.
  \end{proof}

\subsection{Level $N=2p$}
\label{twotoone}
  In this section, we will study the case when $N=2p$ for $p$ an odd
  prime. Specifically, we want to show that $L(E,1) \neq 0$. 
  In this case it seems more straightforward to prove this using analytic tools. 

  Specifically, let $f_E(q)=\sum a_nq^n$ be the modular form attached to the 
  elliptic curve $E$, and let $\Omega_E$ be the real period of $E$.
  Note that $L(f_E, 1) \in \RR$ since the Fourier coefficients of $f_E$ are
  rational integers. Therefore, the order of $\pi(P_{2p})$ is the order of
  $L(f_E,1) \in \RR/{\Omega_E \ZZ}$.
  We know that $L(f_E,s)$ has an Euler product expansion
  \[ L(f_E,s)= \prod_p L_p(f_E,s),\]
  and $L_2(f_E,s)={1 \over 1-a_22^{-s}}.$
  Similarly 
  \begin{eqnarray*}
      \pi(P_{p})&=& 2\pi i\int_{1\over 2}^{i\infty} f_E(z)dz \\
      &=& 2\pi i \int_0^{i\infty} f_E(z+1/2)dz  \\
      &=& 2\pi i \int_0^{i\infty} \sum (-1)^{n}a_n q^ndz
  \end{eqnarray*}
  which implies that $\pi(P_p)$ can be written as $L(g,1)$ where $L(g,s)$
  has an Euler product expansion
  \begin{eqnarray*}
      L(g,s)&=&  ({-1+{a_2 \over 2^s}+{a_4\over 4^s}+\dots})\prod_{p>2} L_p(f_E,s) \\
      &=& -{1-a_2 2^{1-s} \over 1-a_2 2^{-s}} \prod_{p>2}L_p(f_E,s)
  \end{eqnarray*}
  Therefore $L(g,1)=L(f_E,1)(a_2-1),$
  and more appropriately for us 
  \[ \pi(P_p)\equiv (a_2-1)\pi(P_{2p})\pmod {\Omega_E \ZZ}.\]
  
  We know that if $E$ has an odd congruence number, then $(w_2)_*$ is acting
  trivially, which implies that $a_2=-1$. Therefore
  \[ \pi(P_p) \equiv -2\pi(P_{2p})  \pmod {\Omega_E \ZZ}.\]
  However, we also know that $P_{2p}=w_2(P_{p})$, and $\pi(w_2(P_{p}))=\pi(P_{p})+\alpha$
  where $\alpha$ is a $2$-torsion point in $E$. Since both
  $\pi(P_p)$ and $\pi(P_{2p})$ are equivalent to real numbers, 
  $\alpha$ is also equivalent to a real number, which 
  implies that $\alpha \equiv {\Omega_E \over 2} \pmod {\Omega_E \ZZ}.$
  As a result
  \begin{eqnarray*}
      \pi(P_p) &\equiv& \pi(P_{2p})+{\Omega_E \over 2} \pmod {\Omega_E \ZZ}, \\
      & \equiv & -2\pi(P_{2p}) \\
      \Rightarrow -3\pi(P_{2p}) &\equiv& {\Omega_E \over 2} \pmod {\Omega_E \ZZ}, \\
      \Rightarrow \pi(P_{2p}) &\equiv& \Omega_E( {k \over 3}-{1 \over 6}) \pmod {\Omega_E \ZZ}
  \end{eqnarray*}
  for some integer $k$. Therefore, $\pi(P_{2p}) \neq 0$ and 
  $L(f_E,1) \neq 0$. We also observe that $\pi(P_{2p})$ will either
  be a $6$-torsion point (for $k \equiv 0$ or $1 \pmod 3$), or a 
  $2$-torsion point (for $k \equiv 2 \pmod 3$).

  In either case, we have an elliptic curve with a conductor
  $2p$ and a rational $2$-torsion point. Such elliptic curves have
  been studied by Ivorra \cite{Ivorra}. We can use his techniques
  to put stringent conditions on the values for $p$.
  Ivorra shows that if $p \geq 29$, then 
  there is an integer $k \geq 4$ such that
  one of $p+2^k$, $p-2^k$, or $2^k-p$ is a perfect square.
  However, we already know from theorem \ref{modabclassification} that 
  $p \equiv 7 \pmod{16}$. Putting these
  two together, we get that $p=2^k-m^2$.
  In fact, in this case, Ivorra's result says that there exists
  $7 \leq k < f(p)$ where
  \begin{eqnarray*}
  f(n)= 
  \begin{cases}
      18+2\log_2 n & \mbox{ if $n<2^{96}$,} \\
      435+10\log_2 n & \mbox{ if $n\geq 2^{96}$}
  \end{cases},
  \end{eqnarray*}
  and our elliptic curve is isogeneous to
  \[ y^2+xy=x^3+{m-1 \over 4}x^2+2^{k-6}x.\]
  Searching through the Cremona database, we find out
  that the only elliptic curves with an odd modular degrees and conductors
  $2p$ with $p \leq 29$ are $E=14A$ and $E=46A$, and both of these
  are of the form above.

\subsection{Level $N=4p$}
\label{twototwo}
    As with the case of $N=2p$, we can use Ivorra's table to parametrize
    all elliptic curves with conductor $4p$ and a rational $2$-torsion point.
    Specifically, for $p>29$, 
    $p=a^2+4$ for some integer $a \equiv 1 \pmod 4$, and
    $E$ is isomorphic to one of the following two isogenous elliptic curves
    \begin{eqnarray*}
        E &:& y^2=x^3+ax^2-x, \\
        E'&:& y^2=x^3-2ax^2+px.
    \end{eqnarray*}
    We can calculate the rank of such elliptic curves using a standard
    $2$-descent. In fact, if we let $\phi:E \rightarrow E'$ and
    $\phi'$ be the dual isogeny, using the notation from \cite{AECI} we
    get 
    \[ |S^\phi(E,\QQ)|=|S^{\phi'}(E,\QQ)|=2, \]
    which implies that
    \[ |E(\QQ)/\phi'(E'(\QQ))|=|E'(\QQ)/\phi(E(\QQ))|=2, \]
    which, by the exact sequence
    \[ 0 \rightarrow E'(\QQ)[\phi']/\phi(E(\QQ))[2] \rightarrow E(\QQ)/\phi'(E'(\QQ)) \rightarrow E(\QQ)/2E(\QQ) \rightarrow E'(\QQ)/\phi(E(\QQ)) \rightarrow 0 \]
    gives us $|E(\QQ)/2E(\QQ)| \leq 4.$ This forces the rank
    of $E(\QQ)$ to be $0$. 
    
    For $p \leq 29$, we can consult Cremona's table to get the 
    elliptic curves $20A$, $52C$, and $116C$. In fact all these elliptic curves
    are of the model constructed above.

\subsection{Level $N=8p$}
\label{twotothree}
    In this case, Ivorra's table tells us that any elliptic curve with a rational
    $2$-torsion point and the conductor $N=8p$ satisfies 
    $p \equiv a^2 \pmod {16}$ for $p>31.$ However, by 
    theorem \ref{modabclassification}, $p \equiv 3 \pmod 4$,
    therefore there are no elliptic curves with conductor $8p$ and
    odd congruence number
    for $p >31.$ Using Cremona's table, we know that the elliptic curve
    $24A$ is the only elliptic curve with the conductor $8p$ and an odd
    congruence number.
    Furthermore this curve has rank $0$.

We will combine all of the above results in
\begin{thm}
    \label{oddellipticcurves}
    Let $E/\QQ$ be an elliptic curve with an odd congruence number.
    Then one of the following is true
    \begin{enumerate}
        \item \label{bad} $E$ has a conductor $p$ and no $2$-torsion point,
            $E$ has supersingular reduction at $2$, and $E(\RR)$ is connected.
        \item $E$ has a conductor $p$ and a rational $2$-torsion point (hence
            it is a Neumann-Setzer curve), and $p=u^2+64$ with 
            $u \equiv 3 \pmod 8$.
        \item $E$ has a conductor $2p$ and $p=2^k-m^2$ for some odd integer 
            $7 \leq k$ and integer $m$,
            and $E$ is isogenous to
            \[ y^2+xy=x^3+{m-1 \over 4} x^2+2^{k-6}x.\]
        \item $E$ has a conductor $4p$ and $p=m^2+4$ for some integer
            $m \equiv 1 \pmod 4$, and $E$ is isogenous to one of
            \[   y^2 = x^3+mx^2-x.\]
        \item $E$ has a conductor $pq$ with $p$ and $q$ being odd primes,
            $p \equiv q \equiv 3 \pmod 8$, and for some odd integers $r$ and $s$,
            $p^r-q^s=16,$ and $E$ is isogenous to
            \[ y^2+xy=x^3+{p^r+15 \over 4}x^2 + p^r x.\]
        \item $E$ is one of the exceptional curves $11A$, $15A$, $17A$, $19A$, 
            $21A$, $24A$, $27A$, $32A$, $36A$, $37B$, $49A$,
            $243B$. 
    \end{enumerate}
    In all of the above cases, $E$ has rank $0$, except possibly
    in case \ref{bad}. In this case, we know that $E$ has an even analytic rank.
\end{thm}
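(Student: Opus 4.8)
The plan is to assemble the case analysis carried out in the preceding subsections, organized by the possible shapes of the conductor $N$. First I would record that an odd congruence number forces an odd modular degree and an odd modular exponent by theorem \ref{moddegcong}(1), so every subsection result phrased in terms of odd modular degree or odd modular exponent applies verbatim, while the reverse implication is used only where theorem \ref{moddegcong}(2) licenses it. Next I separate the complex multiplication case: since an elliptic curve has rational Fourier coefficients, an inner twist of $E$ is the same thing as complex multiplication, so theorem \ref{modabclassification} applies precisely when $E$ has neither, and then $N \in \{p, pq, 2p, 4p, 8p\}$ with $p, q$ odd primes. If instead $E$ has complex multiplication, then by section \ref{CMCase} its conductor has the form $2^m p^n$, and section \ref{ComplexMultiplication} lists the finitely many such CM curves of odd modular degree, namely $27A, 32A, 36A, 49A, 243B$, all of rank $0$; these land in case (6).

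For $N = p$ prime I would split on $E(\QQ)_{\tors}$. If it is trivial, the Calegari--Emerton analysis reproduced in section \ref{PrimeLevel}, using $(w_N)_* = -1$ from lemma \ref{AtkinLehnerSign} and corollary \ref{evenrank}, gives case (1): supersingular reduction at $2$, $E(\RR)$ connected, and even analytic rank. If $E(\QQ)_{\tors}$ is nontrivial, the Mestre--Oesterl{\`e} classification makes $E$ one of $11A, 17A, 19A, 37B$ (which go into case (6)) or else $E(\QQ)_{\tors} = \ZZ/2\ZZ$ and $E$ is a Neumann--Setzer curve; in the latter situation Stein--Watkins force $u \equiv 3 \pmod 8$, where $p = u^2 + 64$, and the proposition of section \ref{PrimeLevel} gives $L(E,1) \neq 0$, hence rank $0$ --- this is case (2).

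Then I treat the composite conductors one at a time. For $N = pq$: corollary \ref{twotorsionstructure} gives $E[2](\QQ) = (\ZZ/2\ZZ)^2$, hence good ordinary reduction at $2$, and proposition \ref{Technical2} together with the Hasse--Weil bound forces $E_{\tors} = (\ZZ/2\ZZ)^2$ unless $pq \in \{15, 21\}$ (which go into case (6)); in the remaining cases the theorem of section \ref{twotozero} gives $p \equiv q \equiv 3 \pmod 8$, corollary \ref{diffsixteen} gives odd integers $r, s$ with $p^r - q^s = 16$ together with the displayed model, and the last theorem of that subsection gives $L(E,1) \neq 0$ --- case (5). For $N = 2p$: theorem \ref{thmCE1} supplies a rational $2$-torsion point, the $L$-function computation of section \ref{twotoone} gives $\pi(P_{2p}) \neq 0$ hence rank $0$, and combining $p \equiv 7 \pmod{16}$ from theorem \ref{modabclassification} with Ivorra's parametrization yields $p = 2^k - m^2$ with $k \geq 7$ odd and the stated model --- case (3), into which the small curves $14A, 46A$ already fall. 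For $N = 4p$: Ivorra's table gives $p = m^2 + 4$ with $m \equiv 1 \pmod 4$, and the $2$-descent of section \ref{twototwo} shows $|E(\QQ)/2E(\QQ)| \le 4$ hence rank $0$ --- case (4), into which $20A, 52C, 116C$ fall. For $N = 8p$: theorem \ref{modabclassification} forces $p \equiv 3 \pmod 4$ while Ivorra's table requires $p \equiv a^2 \pmod{16}$, which is incompatible for $p > 31$, so only $24A$ survives, of rank $0$ --- case (6). Finally I would collect all the sporadic curves met above into the single list of alternative (6), and observe that in cases (2)--(6) the rank was shown to be $0$ (via $L(E,1) \neq 0$, or a $2$-descent when $N = 4p$), while in case (1) corollary \ref{evenrank} gives the even analytic rank.

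Since the substantive work is already done in sections \ref{CMCase}--\ref{twotothree}, there is no single hard step; the only real obstacle is organizational, namely to verify that every conductor produced by theorem \ref{modabclassification} and section \ref{CMCase} is routed to exactly one of the six alternatives, that the equivalence between odd congruence number and odd modular degree is invoked only in the direction guaranteed by theorem \ref{moddegcong}, and that the exceptional curves named across the individual subsections assemble into precisely the list appearing in alternative (6).
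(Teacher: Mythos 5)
Your proposal is correct and follows essentially the same route as the paper, which states theorem \ref{oddellipticcurves} as a summary assembled from the case-by-case analysis of sections \ref{ComplexMultiplication}--\ref{twotothree} (CM, prime level split by torsion, and the conductors $pq$, $2p$, $4p$, $8p$), exactly as you organize it. Your added bookkeeping --- using theorem \ref{moddegcong} only in the direction odd congruence number $\Rightarrow$ odd modular degree, and noting that for elliptic curves an inner twist is the same as CM --- is consistent with what the paper does implicitly.
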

Note that all of the curves in case 6 in the above theorem have a non-trivial
torsion point. Therefore we have proved that if $E$ has odd congruence number
and has a nontrivial torsion point, then it has rank $0$.
Also note that for all of the above cases, except for case \ref{bad}, we construct
a family of elliptic curves with all the desired torsion structures and conductors.
We expect
that all of these elliptic curves have odd congruence numbers. This can be proved 
if, for example,
we show that $J_0(N)[\mm] \rightarrow E[2]$ is injective and $J[\mm]=C_N[\mm]$.
When $E$ is a Neumann-Setzer curve, the results of \cite{Maz79} and \cite{MO89}
prove this result. 
We expect that similar results are true for the other cases;
however we, do not yet know of a proof of this result.

Finally, it is natural to ask how often do elliptic curves have odd 
congruence number. Since such elliptic curves can not have more than
three primes dividing their conductor, they are not that common. Furthermore
as soon as we have a nontrivial rational torsion point, we have a 
conjectural parametrization of all such elliptic curves. Therefore we like to
know how
often we get an optimal elliptic curve of prime conductor with no rational
torsion point having an odd modular degree. Looking through Cremona's
table of elliptic curves of conductor less than $130000$, we find 
$1991$ elliptic curves of prime conductor and trivial
rational torsion structure, out of which $196$ of those have an odd
modular degree.

    \bibliographystyle{plain}
    \bibliography{refs}
\end{document}